\newlength{\dhatheight}
\newcommand\restr[2]{{
  \left.\kern-\nulldelimiterspace 
  #1 
  \littletaller 
  \right|_{#2} 
  }}
\newcommand{\littletaller}{\mathchoice{\vphantom{\big|}}{}{}{}}
\DeclareFontFamily{U}{mathb}{\hyphenchar\font45}
\DeclareFontShape{U}{mathb}{m}{n}{
      <5> <6> <7> <8> <9> <10> gen * mathb
      <10.95> mathb10 <12> <14.4> <17.28> <20.74> <24.88> mathb12
      }{}
\DeclareSymbolFont{mathb}{U}{mathb}{m}{n}
\DeclareMathSymbol{\curvearrowleft}    {\mathrel}{mathb}{"F0}
\DeclareMathSymbol{\curvearrowbotright}{\mathrel}{mathb}{"F4}
\newcommand\curvearrowleftbotright{%
  \mathord{\mathpalette\@curvearrowleftbotright\relax}%
}
\newcommand\@curvearrowleftbotright[2]{%
  \raisebox{.3ex}{$#1\m@th\curvearrowleft$}%
  \llap{\raisebox{-.3ex}{$#1\m@th\curvearrowbotright$}}%
}
\newtheorem{theorem}{Theorem}[section]
\newtheorem{lemma}[theorem]{Lemma}
\newtheorem{problem}[theorem]{Problem}
\newtheorem{corollary}[theorem]{Corollary}
\theoremstyle{definition}
\newtheorem{definition}[theorem]{Definition}
\theoremstyle{definition}
\theoremstyle{remark}
\newtheorem{remark}[theorem]{Remark}
\numberwithin{equation}{section}
\newcommand{\reg}{{\rm reg}}
\newcommand{\ind}{{\rm ind}}
\newcommand{\Per}{{\rm Per}}
\newcommand{\id}{\operatorname{id}}
\newcommand{\Fix}{{\rm Fix}}
\begin{document}
\vspace{0.5in}
\title[Fixed point indices of iterates]%
{Fixed point indices of iterates of orientation-reversing homeomorphisms} 

\author{Grzegorz Graff}
\address{Faculty of Applied Physics and Mathematics, Gda\'nsk University of
Technology, Narutowicza 11/12, 80-233 Gda\'nsk, Poland}

\email{grzegorz.graff@pg.edu.pl}

\author{Patryk Topór}
\address{Doctoral School, Gda\'nsk University of
Technology, Narutowicza 11/12, 80-233 Gda\'nsk, Poland, Orcid: 0009-0009-9723-7816}
\email{patryk.topor@pg.edu.pl}

\subjclass[2010]{Primary 37B30, 55M20; Secondary  37E30.}
\keywords{Fixed point index,  Conley index, topological dynamics, homemomorphism}

\thanks{Research supported by the National Science Centre, Poland,
within the grant Sheng 1 UMO-2018/30/Q/ST1/00228.}

\maketitle
\begin{abstract}
We show that any sequence of integers satisfying necessary Dold’s congruences is realized as
the sequence of fixed point indices of the iterates of an orientation-reversing homeomorphism
 of $\mathbb{R}^{m}$ for $m\geq 3$.
 As an element of the construction of the above homeomorphism, we consider the class of boundary-preserving homeomorphisms of $\mathbb{R}^{m}_{+}$ and give the answer to [Problem 10.2, Topol. Methods Nonlinear Anal. 50 (2017), 643–667] providing a complete description of the forms of fixed point indices for this class of maps.
\end{abstract}

\section{Introduction} 
Let $f$ be a self-homeomorphism of $\mathbb{R}^{m}$ for $m\geq 3$ and $p\in \mathbb{R}^{m}$ be a fixed point. We assume that $p$ is isolated  for each iteration of $f$. In such a case, the fixed point index of a map $f^{n}$ at a point $p$, $\ind(f^{n},p)$, is a well-defined integer for each $n$ (we refer the reader to \cite{Marzan} for the formal definition and properties of the fixed point index). This topological device is often used to study local dynamics of a map near fixed points.   To this aim, the knowledge about the behavior of a whole sequence $( \ind(f^{n},p))_{n}$ is crucial (cf. \cite{CMPY, Llibre, Calvez, GrJez, GrJDE, BaBo, GYZhang, GrTopAppl-simpl, GrBasel} for the applications of the sequence of indices to study different classes of maps). 

However, determination of the forms of the sequence $(\ind(f^{n},p))_{n}$ for a given class is usually difficult.  
Since the work of Albrecht Dold \cite{Dold} it is known that every such sequence has to satisfy some congruences, called Dold's relations (see Definition \ref{DoldIntegers} and Theorem \ref{DoldRel}).

In this paper, we consider local indices of homeomorphisms of $\mathbb{R}^{m}$. The case of $m=1$ is trivial, so the first dimension of interest is $m=2$.
 The pioneering work for the class of orientation-preserving homeomorphisms of $\mathbb{R}^{2}$ is the paper of Morton Brown \cite{Brown}. He showed that the fixed point index of the iterated map can only take two different values, one of which is $1$. Later in \cite{LeRoux, PoSa4} the  result of Brown was  developed and all possible forms of $(\ind(f^{n},p))_{n}$  were accurately described for planar homeomorphisms. 
 Let us mention that the strong restrictions for indices of planar homeomorphisms were found in case $\lbrace p\rbrace$ is isolated as an invariant set (see Definition \ref{inv}) \cite{CalvezYoccoz, PoSa2}.
 
 The  problem was investigated also in higher dimensions, i.e. for $m\geq 3$ by Babenko and Bogatyi \cite{BaBo}. It was proved that for a given sequence of integers $(d_{n})_{n}$, which satisfies Dold's congruences, there exists an orientation-preserving self-homeomorphism of $\mathbb{R}^{m}$ such that the equality $\ind(f^{n},p)=d_{n}$ holds for each $n$. However, in their construction, the fixed point $p$ must be a limit of periodic points. 
 This restriction was released in \cite{PoSa} and \cite{PoSa3} in which the authors assumed that $\Fix(f)=\Per(f)=\lbrace p\rbrace$ and showed that there are no further bounds on $(\ind(f^{n},p))_{n}$  other than Dold's relations (cf. also \cite {Corbato_acc} for the continuous case).

The class of orientation-reversing homeomorphisms is not so well-examined. The first work in this area was the paper \cite{Bonino} of Bonino, who showed that for planar orientation-reversing self-homeomorphisms, the fixed point index can take only three values from the set $\lbrace -1,0,1\rbrace$. Later in \cite{GrPrz2} based on the result of Bonino and Dold's congruences, some relationships among elements of the sequence of indices were found. The complete description of the form of indices in two-dimensional case is given in \cite{PoSa4}.

In a more recent work \cite{Corbato} of Hernández-Corbato, Le Calvez and Ruiz del Portal, analysis of  the $3$-dimensional case has been performed 
under the additional assumption that $\lbrace p\rbrace$ is isolated as an invariant set. In particular, strong constraints on the form of the sequences of indices has been found in this case (cf. Theorem \ref{revers}).

The natural question is whether there are any restrictions in the orientation-reversing case in $\mathbb R^3$ (and in higher dimensions) in general case, when the additional assumption on $\lbrace p\rbrace$ used in \cite{Corbato} is dropped.  

The indices of  orientation-reversing homeomorphism represented very regular patterns in the studied cases,  it may suggest that there are also some bounds in dimension $3$ for an arbitrary map in this class. 

In this paper we give the answer to this problem, showing that there are no restrictions, except for Dold's relations for indices of an orientation reversing homeomorphism in $\mathbb R^m$ for $m\geq 3$ (Theorem \ref{thm3}).

Note that indices of orientation-reversing homeomorphisms naturally arise in the context of differential equations, for example they are a useful tool to study periodic solutions via Poincar\'e map (cf. \cite{Po5}).

The second main result of the paper is the following.
In the proof of Theorem \ref{thm3} i.e. in our construction of a homeomorphism with arbitrary indices we consider boundary-preserving homeomorphisms 
 of $\mathbb{R}^{m}_{+}=\lbrace (x,t)\in \mathbb{R}^{m-1}\times [0,\infty)\rbrace$.
  For that class of maps, in Section \ref{sec3},  we  determine all possible forms of pairs $(\ind(f^{n},p),\ind(\bar{f}^{n},p))_{n}$, where $p\in \partial \mathbb{R}_{+}^{m}$ and $\bar{f}=\restr{f}{\partial \mathbb{R}^{m}_{+}}$ (Theorem \ref{thm1} and Corollary \ref{Cor-boundR^m}). In this way, we give a complete solution to Problem 10.2 in \cite{BaWo}. 

\section{Properties of the fixed point indices of iterates \label{sec2}}
Since $\mathbb{R}^{m}$ is homogeneous, we can assume without loss of generality that $0\in \mathbb{R}^{m}$ is an isolated fixed point for each iteration of $f\colon \mathbb{R}^{m}\to \mathbb{R}^{m}$.
\begin{definition}\label{DoldIntegers}
For each natural number $n$, we define integers: \begin{align} i_{n}(f,0)=\sum_{k|n}\mu\left(\frac{n}{k}\right)\ind(f^{k},0),
\end{align}
where $\mu\colon \mathbb{N}\to \lbrace -1,0,1\rbrace$ denotes the classical Möbius function defined by the three following properties: $\mu(1)=1$, $\mu(n)=(-1)^{r}$ if $n$ is a product of $r$ distinct primes and $\mu(n)=0$ otherwise.
\end{definition}
In 1983 A. Dold  proved that integers $i_{n}(f,0)$ are divisible by $n$ for any sequence of fixed point indices of iterates \cite{Dold}:
\begin{theorem}\label{DoldRel} The relation $i_{n}(f,0)\equiv 0\ (\bmod \ n)$ is satisfied for each $n$.
\end{theorem}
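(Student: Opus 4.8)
This is Dold's theorem \cite{Dold}; I would recall its classical proof, which combines a smoothing of $f$, an orbit‑counting identity obtained in a product space, and an elementary congruence for the indices of hyperbolic periodic orbits. Fix $n$. Since $0$ is isolated in $\Fix(f^{k})$ for each of the finitely many $k\mid n$, choose a closed ball $\overline{U}\ni 0$ with $\Fix(f^{k})\cap\overline{U}=\{0\}$, so that $\ind(f^{k},0)=\ind(f^{k},U)$ for all $k\mid n$. Next approximate $f$, on a neighbourhood of $\overline{U}$, in the $C^{0}$‑metric by a smooth map and then perturb it by a $C^{1}$‑small generic deformation so that the resulting map $g$ has all its periodic points of period at most $n$ hyperbolic; this is a Kupka--Smale‑type transversality argument (for each $k\le n$ one requires the graph of $g^{k}$ to meet the diagonal transversally, and the derivative there to have no eigenvalue on the unit circle — finitely many generic conditions). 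For $g$ sufficiently $C^{0}$‑close to $f$, the maps $f^{k}$ and $g^{k}$ are fixed‑point free and homotopic on $\partial U$ for every $k\mid n$, whence $\ind(f^{k},0)=\ind(g^{k},U)$.

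For each $k\mid n$, work in $U^{k}$ with the cyclically shifted map $R^{(k)}_{g}(x_{1},\dots,x_{k})=(g(x_{2}),\dots,g(x_{k}),g(x_{1}))$. Its fixed points in $U^{k}$ are exactly the $g$‑periodic orbits $O\subseteq U$ whose length $m=|O|$ divides $k$, each giving rise to an $m$‑element cyclic set of fixed points; by additivity of the index, together with the fact that $g$ conjugates $g^{k}$ to itself — so, by commutativity of the index, $\ind(g^{k},\cdot)$ is constant along $O$ — we obtain $\ind(R^{(k)}_{g},U^{k})=\sum_{O}m\cdot\ind(g^{k},p_{O})$ for arbitrary representatives $p_{O}\in O$. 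Since $\mathbf 0$ is the only fixed point of $R^{(k)}_{f}$ in $\overline{U^{k}}$ and $\ind(R^{(k)}_{f},\mathbf 0)=\ind(f^{k},0)$ (as in \cite{Dold}), homotopy invariance gives $\ind(f^{k},0)=\sum_{m\mid k}m\sum_{|O|=m}\ind\bigl((g^{m})^{k/m},p_{O}\bigr)$. Taking the Möbius combination over $k\mid n$ and using $\sum_{m\mid k\mid n}\mu(n/k)(\,\cdot\,)=\sum_{j\mid(n/m)}\mu\bigl(\tfrac{n/m}{j}\bigr)(\,\cdot\,)$, this collapses to
\[
 i_{n}(f,0)=\sum_{m\mid n}m\!\!\sum_{O\subseteq U,\ |O|=m}\!\!i_{n/m}\bigl(g^{m},p_{O}\bigr).
\]

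It remains to treat each inner term. For an orbit $O$ of minimal period $m$, the point $p_{O}$ is a hyperbolic fixed point of $h:=g^{m}$; writing $E^{u}$ for its unstable subspace and $u=\dim E^{u}$, a direct computation of $\mathrm{sgn}\det\bigl(I-Dh(p_{O})^{\,j}\bigr)$ (the stable part contributing a positive sign for every $j$) yields $\ind(h^{j},p_{O})=(-1)^{u}\delta^{\,j}$, where $\delta=\mathrm{sgn}\det\bigl(Dh(p_{O})|_{E^{u}}\bigr)\in\{-1,1\}$ is independent of $j$. Hence $i_{n/m}(h,p_{O})=(-1)^{u}\sum_{d\mid(n/m)}\mu\bigl(\tfrac{n/m}{d}\bigr)\delta^{\,d}$, which is divisible by $n/m$ by the classical Gauss congruence $\sum_{d\mid\ell}\mu(\ell/d)a^{d}\equiv 0\pmod\ell$, valid for every integer $a$ (reduce to prime powers and apply Euler's theorem). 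Consequently every summand $m\cdot i_{n/m}(g^{m},p_{O})$ above is divisible by $m\cdot(n/m)=n$, and therefore $n\mid i_{n}(f,0)$.

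The technical heart is the preparation of $g$: one must replace the mere homeomorphism $f$ by a smooth map with hyperbolic low‑period periodic orbits without altering the indices $\ind(f^{k},0)$ — which needs a $C^{0}$‑smoothing followed by a $C^{1}$‑generic (Kupka--Smale) perturbation — together with the identification $\ind(R^{(k)}_{f},\mathbf 0)=\ind(f^{k},0)$, which is precisely what makes the product space record only the periodic orbits lying entirely in $U$. Once these are in place the orbit count and the number‑theoretic congruence are routine, and the argument is in essence Dold's original one.
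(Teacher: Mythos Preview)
The paper does not prove this theorem: it merely states it and cites Dold's 1983 paper \cite{Dold}. So there is no ``paper's own proof'' to compare against; you have supplied a full argument where the authors give only a reference.

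Your proof is correct. It is, however, not quite Dold's original argument, despite your closing remark. Dold works purely topologically: he notes that the product-space map $R^{(n)}_{f}$ commutes with the cyclic $\mathbb{Z}/n$-action permuting the factors, localises the index on the strata where this action is free, and reads off the divisibility directly from that equivariance together with M\"obius inversion. No smoothing, no hyperbolicity, no Gauss congruence are needed. Your route keeps Dold's product-space identification $\ind(R^{(k)}_{f},\mathbf{0})=\ind(f^{k},0)$ but replaces the equivariant localisation by a Kupka--Smale perturbation and an explicit signed orbit count, in the style of Shub--Sullivan or Chow--Mallet-Paret--Yorke. The trade-off: you need the transversality preparation step, but in return you get a concrete expression of $i_{n}(f,0)$ as a sum over genuine hyperbolic periodic orbits, which is sometimes more useful in applications.

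Two minor points of care, neither fatal. First, since $g$ is only a map $\overline{U}\to\mathbb{R}^{m}$, ``periodic orbits of $g$ in $U$'' should be read as fixed points of $R^{(k)}_{g}$ in $U^{k}$ modulo the cyclic action; you use this correctly but the phrasing is informal. Second, the identity $\ind(R^{(k)}_{g},\text{pt})=\ind(g^{k},p_{O})$ at each such fixed point (not just at $\mathbf{0}$ for $f$) is what makes the orbit count work; it follows from the same commutativity argument you invoke for $f$, but deserves a word.
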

In general, for any sequence of integers $(d_{n}) _{n}$, the following congruence: \begin{align}
    \sum_{k|n}\mu\left(\frac{n}{k}\right)d_{k} \equiv 0 \ (\hbox{mod} \ n)
\end{align} is called the Dold's relation. Using the properties of these relations, one may represent sequences of fixed point indices of iterates in the simple form of $k-$periodic expansion using  basic periodic regular sequences described in Definition \ref{reg} below.
\begin{definition}\label{reg}For a given integer $k>0$, we define the regular sequence: \begin{align}\label{regseq}
        \reg_{k}(n)=\begin{cases}
k \ \ \hbox{if} \ \ k  |n, \\
0 \ \ \hbox{otherwise}.
\end{cases}\end{align}
\end{definition}
\begin{theorem} {\normalfont\cite{Marzan}}.
Every arithmetic function $\psi\colon \mathbb{N}\to \mathbb{C}$  can be written uniquely in the periodic expansion form: \begin{align}
        \psi(n)=\sum_{k=1}^{\infty}a_{k}\reg_{k}(n),
    \end{align}
    where $a_{n}=\frac{1}{n}i_{n}(\psi)=\frac{1}{n}\sum_{d|n}\mu\left(\frac{n}{d}\right)\psi(d)$. Moreover, $\psi$ is integral valued and satisfied Dold's relations if and only if $a_{n}(\psi)\in \mathbb{Z}$ or equivalently if $i_{n}(\psi)\equiv 0 \ (\bmod \ n)$.
\end{theorem}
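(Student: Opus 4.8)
The plan is to reduce the statement to classical Möbius inversion on the lattice of divisors. First I would observe that for each fixed $n$ the series $\sum_{k=1}^{\infty}a_{k}\reg_{k}(n)$ is actually a finite sum: by the definition of $\reg_{k}$ only the indices $k$ with $k\mid n$ contribute, and for those $\reg_{k}(n)=k$. Hence for any sequence $(a_{k})_{k}$ the right-hand side is the well-defined arithmetic function $n\mapsto\sum_{k\mid n}k\,a_{k}$, so no question of convergence arises and "uniqueness" is a meaningful assertion about genuine arithmetic functions.

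Next I would introduce the auxiliary function $g(k)=k\,a_{k}$, so that the desired identity $\psi(n)=\sum_{k=1}^{\infty}a_{k}\reg_{k}(n)$ becomes $\psi(n)=\sum_{k\mid n}g(k)$, i.e. $\psi$ is the summatory (Dirichlet) function of $g$. The key step is then the Möbius inversion formula: $\psi(n)=\sum_{k\mid n}g(k)$ holds for all $n$ if and only if $g(n)=\sum_{d\mid n}\mu(n/d)\psi(d)$ holds for all $n$. Reading this equivalence in both directions yields existence and uniqueness simultaneously: any admissible $(a_{k})$ must satisfy $n\,a_{n}=g(n)=\sum_{d\mid n}\mu(n/d)\psi(d)=i_{n}(\psi)$, and conversely declaring $a_{n}:=\tfrac1n i_{n}(\psi)$ produces a sequence for which the expansion holds. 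If one prefers not to quote Möbius inversion as a black box, the same conclusion follows by a short induction on $n$: the value $\psi(n)$ together with the already-determined coefficients $a_{k}$ for the proper divisors $k\mid n$ forces $a_{n}$ uniquely.

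For the "moreover" part I would argue as follows. If every $a_{k}\in\mathbb{Z}$, then from $\psi(n)=\sum_{k\mid n}k\,a_{k}$ we see that $\psi$ is integer valued, and from $i_{n}(\psi)=n\,a_{n}$ we get $i_{n}(\psi)\equiv 0\ (\bmod\ n)$, i.e. $\psi$ satisfies Dold's relations. Conversely, if $\psi$ is integer valued then each $i_{n}(\psi)=\sum_{d\mid n}\mu(n/d)\psi(d)$ is an integer, and Dold's relations say precisely $n\mid i_{n}(\psi)$, whence $a_{n}=\tfrac1n i_{n}(\psi)\in\mathbb{Z}$; this chain of equivalences also gives the stated reformulation in terms of $i_{n}(\psi)$.

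Since everything here is elementary, I do not expect a real obstacle. The only points needing a little care are verifying the finiteness of the sum defining the expansion (so that the uniqueness claim is properly posed) and keeping straight which direction of Möbius inversion supplies existence and which supplies uniqueness.
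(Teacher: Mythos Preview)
Your argument is correct and is precisely the standard proof via M\"obius inversion. Note, however, that the paper does not give its own proof of this theorem: it is stated with a citation to \cite{Marzan} and no argument is supplied. So there is nothing to compare against; your write-up simply fills in the classical justification that the authors chose to omit.
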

\begin{corollary}
    Any sequence of fixed point indices of iterates can be represented uniquely in the form of periodic expansion: \begin{align} \label{Realization}\ind(f^{n},0)=\sum_{k=1}^{\infty}a_{k}\reg_{k}(n),
\end{align}
where $a_{n}=\frac{1}{n}i_{n}(f,0)\in \mathbb{Z}$. 
\end{corollary}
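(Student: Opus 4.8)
The plan is to deduce the corollary directly from the preceding theorem of \cite{Marzan}, applied to the specific arithmetic function $\psi(n):=\ind(f^{n},0)$. First I would observe that $\psi$ is well defined and integer-valued: by the standing assumption of this section, $0$ is an isolated fixed point of every iterate $f^{n}$, so each $\ind(f^{n},0)$ is a well-defined integer (as recalled in the introduction). Thus $\psi\colon\mathbb{N}\to\mathbb{Z}\subset\mathbb{C}$ is an arithmetic function, and the cited theorem applies verbatim, producing a unique representation $\psi(n)=\sum_{k=1}^{\infty}a_{k}\reg_{k}(n)$ with $a_{n}=\frac{1}{n}i_{n}(\psi)=\frac{1}{n}\sum_{d|n}\mu(n/d)\psi(d)$. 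Note that for each fixed $n$ this series is in fact a finite sum, since $\reg_{k}(n)=0$ whenever $k\nmid n$, so no convergence question arises; and the uniqueness of the expansion is exactly the uniqueness asserted in the quoted theorem.

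It then remains only to check that every coefficient $a_{n}$ is an integer. Here I would invoke Theorem \ref{DoldRel}: the sequence of fixed point indices of iterates satisfies Dold's relations, that is, $i_{n}(f,0)\equiv 0\ (\bmod\ n)$ for every $n$. Since $i_{n}(\psi)=i_{n}(f,0)$ by Definition \ref{DoldIntegers}, this is precisely the condition $i_{n}(\psi)\equiv 0\ (\bmod\ n)$ appearing in the final sentence of the cited theorem, which is there equivalent to $a_{n}(\psi)\in\mathbb{Z}$. Hence $a_{n}=\frac{1}{n}i_{n}(f,0)\in\mathbb{Z}$ for all $n$, which is the assertion of the corollary.

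There is essentially no obstacle: the statement is a repackaging of the expansion theorem of \cite{Marzan} together with Dold's congruences, both quoted above, so the proof reduces to verifying that the hypotheses of those results hold for $\psi(n)=\ind(f^{n},0)$. The only points requiring a word of care are the identification $i_{n}(\psi)=i_{n}(f,0)$ and the remark that the infinite sum collapses to a finite one for each fixed $n$; neither is deep.
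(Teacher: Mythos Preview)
Your proposal is correct and matches the paper's intent: the corollary is stated there without proof, as an immediate consequence of the preceding theorem from \cite{Marzan} applied to $\psi(n)=\ind(f^{n},0)$ together with Dold's congruences (Theorem \ref{DoldRel}). Your remarks about the finite sum and the identification $i_{n}(\psi)=i_{n}(f,0)$ are exactly the small checks one would expect to fill in.
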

The elements of the sequence $(a_{n})_{n}$ are called {\it Dold's coefficients}. 
The arithmetical properties of such sequences were discussed in \cite{ByGrWa}.
Let us note that the problem of determining all possible forms of fixed point indices of iterates $( \ind(f^{n},0))_{n}$ can be reformulated into the problem of realizing arbitrary sequence of integers (i.e. Dold's coefficients) by the equation (\ref{Realization}).
\section{Indices of boundary-preserving homeomorphisms \label{sec3}}

Let us define $\mathbb{R}^{m}_{+}=\lbrace (x,t)\in \mathbb{R}^{m-1}\times [0,\infty)\rbrace$ and 
${\partial \mathbb{R}^{m}_{+}}= \lbrace (x,t)\in \mathbb{R}^{m-1}\times \lbrace 0 \rbrace \rbrace$.
In this section, we consider a boundary-preserving homeomorphism $f \colon \mathbb{R}^{m}_{+} \to \mathbb{R}^{m}_{+}$ and its restriction to a boundary, i.e., 
a map $\bar{f}\colon {\partial \mathbb{R}^{m}_{+}} \to {\partial \mathbb{R}^{m}_{+}}$.
 We give the complete description of all possible forms of local fixed point indices of iterates of such pairs of homeomorphisms, i.e. $(\ind(f^n,0), \ind(\bar{f}^n,0))_{n}$.
Our result provides the solution to Problem 10.2 in \cite{BaWo} (Problem \ref{BargeWojcik} below).
\begin{problem}{\normalfont(cf.~~\cite{BaWo})}. Let $f$ be a boundary-preserving homeomorphism of $\mathbb{R}^{m}_{+}$. Determine all possible forms of pairs $(\ind(f^n,0), \ind(\bar{f}^n,0))_{n}$. \label{BargeWojcik}
\end{problem}

It turns out that the only restrictions are in dimension $2$ and $3$ (see Corollary \ref{Cor-boundR^m}). As the restrictions in $\mathbb{R}^{2}_{+}$ are quite obvious and follow from Theorem \ref{forms} given below, 
we will concentrate on the case of
boundary-preserving homeomorphisms of $\mathbb{R}^{3}_{+}$.
Our main result of this section is the following:
\begin{theorem} \label{thm1}
Let $(f,\bar{f})\colon(\mathbb R^{3}_+,\partial \mathbb R^{3}_+)\to (\mathbb
R^{3}_+,\partial \mathbb R_+^{3}$) be a homeomorphism with $0
\in \partial \mathbb{R}^{3}_{+}$ an isolated fixed point for each iteration.
Then the sequences of indices have the following forms:
    \begin{align} \label{31}
        \ind(f^{n},0)=\sum_{k=1}^{\infty} a_{k}\reg_{k}(n), \ \ \ \ \ind(\bar{f}^{n},0)=\reg_{1}(n)+\bar{a}_{d}\reg_{d}(n),
    \end{align}
 where $a_k$ are arbitrary integers, $d>0$, $\bar{a}_{d}$ is an arbitrary integer.
\end{theorem}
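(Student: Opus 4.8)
The assertion is a complete characterisation and therefore splits into two independent halves: that every admissible pair $(f,\bar f)$ has indices of the displayed shape, and that, conversely, every choice of $(a_k)_{k\ge1}\in\mathbb Z^{\mathbb N}$, $d>0$ and $\bar a_d\in\mathbb Z$ is realised by some admissible pair. The first half is immediate. The equality $\ind(f^{n},0)=\sum_k a_k\reg_k(n)$ with $a_k\in\mathbb Z$ is just the Corollary to Dold's relations applied to the iterates of $f$ at $0$. For the boundary map, $\bar f$ is a self-homeomorphism of $\partial\mathbb R^{3}_{+}\cong\mathbb R^{2}$ and $0$ is isolated for every $\bar f^{n}$, since $\Fix(\bar f^{n})=\Fix(f^{n})\cap\partial\mathbb R^{3}_{+}$; hence the complete classification of fixed point indices of iterates of planar homeomorphisms (Brown \cite{Brown} and Bonino \cite{Bonino}, in the sharp form \cite{LeRoux,PoSa4}; cf.\ Theorem \ref{forms} below) applies and gives $\ind(\bar f^{n},0)=\reg_1(n)+\bar a_d\reg_d(n)$ for some $d>0$ and $\bar a_d\in\mathbb Z$. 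This is the only point where the dimension enters: for $m\ge4$ the boundary is $\mathbb R^{m-1}$ with $m-1\ge3$, which carries no such restriction, so the exceptional dimensions in Corollary \ref{Cor-boundR^m} are only $2$ and $3$.

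For the realisation, the first ingredient is a planar homeomorphism $\bar f$ of $\mathbb R^{2}$ with $\ind(\bar f^{n},0)=\reg_1(n)+\bar a_d\reg_d(n)$ and $0$ isolated for all iterates. This is provided by the planar realisation results; explicitly one takes near $0$ a model $z\mapsto\zeta z+\alpha z^{\ell}$, with $\zeta$ a primitive $d$-th root of unity and $\ell,\alpha$ tuned so that $\bar f^{d}(z)-z$ has leading term of winding number $1+\bar a_d d$ (for $d=2$, $\bar a_d=1$ one may take $z\mapsto-z+z^{3}$), and extends it to a homeomorphism of $\mathbb R^{2}$.

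The decisive step is to extend $\bar f$ to a boundary-preserving homeomorphism $f$ of $\mathbb R^{3}_{+}$ with $\ind(f^{n},0)=\sum_k a_k\reg_k(n)$ prescribed arbitrarily. A product extension $f=\bar f\times g$ with $g\colon[0,\infty)\to[0,\infty)$ is useless, since $\ind(g^{n},0)\in\{0,1\}$ forces $\ind(f^{n},0)\in\{\ind(\bar f^{n},0),0\}$; one needs a genuinely non-product homeomorphism near the boundary point, mixing the two tangential directions with the transverse half-line. My plan is to reduce this to the known realisation in $\mathbb R^{3}$. Identify $\mathbb R^{3}_{+}$ with $\{s\ge0\}\subset\mathbb R^{2}\times\mathbb R$ and let $\sigma(x,s)=(x,-s)$ be the reflection fixing $P=\mathbb R^{2}\times\{0\}$. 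For a homeomorphism $g$ of $\mathbb R^{3}$ whose germ at $0$ is $\sigma$-equivariant, additivity of the fixed point index over $\mathbb R^{3}=\{s\ge0\}\cup\{s\le0\}$ (intersection $P$), together with the equality of the two half-space indices coming from $\sigma$-symmetry, gives
\begin{equation*}
\ind(g^{n},0;\mathbb R^{3})=2\,\ind\!\big((g|_{\{s\ge0\}})^{n},0;\mathbb R^{3}_{+}\big)-\ind\!\big((g|_{P})^{n},0;P\big).
\end{equation*}
So it suffices to build such a $g$ whose germ at $0$ restricts on $P$ to the $\bar f$ above and with $\ind(g^{n},0)=2\sum_k a_k\reg_k(n)-\reg_1(n)-\bar a_d\reg_d(n)$; the right-hand side is an integer combination of the $\reg_k$, hence a Dold sequence, so it is realised by a homeomorphism of $\mathbb R^{3}$ by \cite{BaBo,PoSa,PoSa3}, and $f:=g|_{\{s\ge0\}}$ then has the required indices.

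The main obstacle I anticipate is precisely this: upgrading the $\mathbb R^{3}$ realisation to one whose germ at $0$ is $\sigma$-equivariant with prescribed restriction to the mirror plane $P$. The standard constructions realise each basic summand $a_k\reg_k(n)$ by gluing model blocks that generate the $\reg_k$ pattern in the index (a period-$k$ rotational part with suitable normal behaviour, together with any inserted periodic orbits); the task is to arrange all of this $\sigma$-symmetrically about $P$ — inserted orbits being placed either inside $P$ (these produce the $\bar a_d\reg_d$ part of $\bar f$) or in $\sigma$-matched pairs off $P$ — while tracking the displayed identity. Equivalently, one must show that the local fixed point index at a boundary point of $\mathbb R^{3}_{+}$ is as unrestricted as the local index of a homeomorphism of $\mathbb R^{3}$, in sharp contrast with the $\mathbb R^{2}_{+}$ case. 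Once this equivariant construction is available, the theorem follows.
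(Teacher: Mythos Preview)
Your treatment of the necessary direction is fine and matches the paper: Dold's relations give the form of $\ind(f^n,0)$, and the planar classification constrains $\ind(\bar f^n,0)$. (A small wrinkle: Theorem~\ref{forms} is stated for orientation-preserving planar homeomorphisms, so you implicitly need $\bar f$ to be orientation-preserving; the paper makes the same tacit assumption.)

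The realisation half, however, has a genuine gap. Your reduction via reflection symmetry and the Mayer--Vietoris identity
\[
\ind(g^{n},0;\mathbb R^{3})
=2\,\ind\!\big((g|_{\{s\ge0\}})^{n},0\big)-\ind\!\big((g|_{P})^{n},0\big)
\]
is correct and is exactly the additivity device the paper uses (Theorem~\ref{May-Viet}). But the cited results \cite{BaBo,PoSa,PoSa3} realise an arbitrary Dold sequence by \emph{some} homeomorphism of $\mathbb R^{3}$; none of them gives a $\sigma$-equivariant germ, and a fortiori none lets you prescribe the restriction $g|_P$ in advance. You acknowledge this as ``the main obstacle,'' but your remaining sketch (place blocks symmetrically, put some orbits in $P$, others in matched pairs) is not a proof: controlling simultaneously the equivariance, the restriction to $P$, and the total index sequence is precisely the construction problem, and it is at least as hard as the boundary-preserving realisation you set out to prove. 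So the reduction does not actually save work.

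The paper takes a different, direct route. It builds $f$ on $\mathbb R^{3}_{+}$ by foliating a solid region by nested conical surfaces $\mathcal C_k$ accumulating on a limit cone, places on each $\mathcal C_k$ a time-one map of a canonical planar flow (of elliptic/hyperbolic type with index $p_k=1+a_{u_k}u_k$) composed with a rotation by a carefully chosen rational angle, and glues these layers by auxiliary one-dimensional profiles. The boundary surface $\mathcal C_1=\partial\mathbb R^{3}_{+}$ carries the prescribed $\bar f$; the higher cones manufacture the remaining $a_k\reg_k$ contributions. The index computation is then an inductive Mayer--Vietoris over the conical regions (Lemmas~\ref{Retr-Lem} and~\ref{induction}), with each intermediate surface contributing a sink and each main surface the designed $\reg$-term. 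In effect the paper \emph{is} the equivariant-type construction your strategy would need, carried out intrinsically in the half-space rather than via doubling.
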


The necessary restrictions on the forms of $(\ind(\bar{f}^{n},0))_n$ in Theorem \ref{thm1} result from the following statement.
\begin{theorem}\label{forms}{\normalfont(\cite{LeRoux}, cf.~ also \cite{PoSa4})}. Let $h$ be an orientation-preserving homeomorphism of the plane.~ Then, a sequence of fixed point indices $(\ind(h^{n},0))_{n}$ has the following form:
\begin{align}\ind(h^{n},0)=\reg_{1}(n)+\bar{a}_{d}\reg_{d}(n)
\end{align} for a fixed integer $d>0$ and $\bar{a}_{d}$ an arbitrary integer.
\end{theorem}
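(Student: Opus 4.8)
The plan is to reduce the statement to the index theory for orientation-preserving planar homeomorphisms at an isolated fixed point (Le Calvez--Yoccoz, Le Roux; \cite{CalvezYoccoz,LeRoux}, cf.\ \cite{Brown,PoSa4}) and then to pin down the coefficients of the periodic expansion with Dold's relation. The structural fact I would invoke is: for $h$ orientation preserving with $0$ an isolated fixed point of every iterate, there is an integer $q\ge 1$ such that $\ind(h^{n},0)=1$ whenever $q\nmid n$, while $\ind(h^{n},0)$ equals one fixed integer $v$ whenever $q\mid n$. Geometrically this reflects the dichotomy that, up to local conjugacy, $0$ is either of \emph{rotation type} (then $\ind(h^{n},0)=1$ for all $n$ --- this includes sinks and sources --- and one may take $q=1$) or of \emph{petal type}, carrying a nonempty finite family of petals attached at $0$, alternately attracting and repelling, cyclically arranged around $0$ and permuted among themselves by $h$; in the petal case $q$ is the common cycle length of that permutation --- common because $h$, preserving orientation, permutes the petals like a rotation of their circular sequence --- so that for $q\mid n$ the iterate $h^{n}$ returns every petal to itself with the attracting/repelling configuration of $h^{q}$ (whence $v=\ind(h^{q},0)$), while for $q\nmid n$ it moves every petal and the index is $1$.

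Granting this, the description of the index sequence is immediate: evaluating $\reg_{1}$ and $\reg_{q}$ on non-multiples and on multiples of $q$ shows the structural fact is precisely
\[
\ind(h^{n},0)=\reg_{1}(n)+\tfrac{v-1}{q}\,\reg_{q}(n).
\]
It remains to verify $\tfrac{v-1}{q}\in\mathbb Z$, which I would extract from Theorem~\ref{DoldRel} rather than from any further dynamics. Take $q\ge 2$ (for $q=1$ there is nothing to check). Every proper divisor $k$ of $q$ is not a multiple of $q$, hence $\ind(h^{k},0)=1$ for all $k\mid q$ with $k<q$; since $\sum_{k\mid q}\mu(q/k)=0$ this gives $i_{q}(h,0)=\sum_{k\mid q}\mu(q/k)\ind(h^{k},0)=v-1$, and the congruence $i_{q}(h,0)\equiv 0\ (\bmod\ q)$ forces $q\mid(v-1)$. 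Setting $d:=q$ and $\bar a_{d}:=\tfrac{v-1}{q}$ (and $d=1$, $\bar a_{1}=v-1$ when $q=1$) yields the asserted form; sharpness --- that each such pair $(d,\bar a_{d})$ is realized --- follows from the explicit models in \cite{LeRoux,PoSa4}.

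The one genuinely hard ingredient is the structural fact quoted above --- the rotation/petal dichotomy and the resulting shape of the index sequence, which is exactly the content of \cite{CalvezYoccoz,LeRoux}. Once that is granted, everything else is the combinatorics of a single cyclic permutation together with a single application of Dold's congruence, so I would expect the write-up past the structural citation to be short.
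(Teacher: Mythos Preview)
The paper does not prove Theorem~\ref{forms}; it is stated there as a quotation of the literature (\cite{LeRoux}, cf.\ \cite{PoSa4}) and used as a black box to justify the shape of $(\ind(\bar f^{n},0))_n$ in Theorem~\ref{thm1}. So there is no ``paper's own proof'' to compare against beyond the bare citation.

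Your proposal is a correct and reasonable unpacking of that citation. You isolate the one genuinely deep input --- the rotation/petal dichotomy for orientation-preserving planar homeomorphisms at an isolated fixed point, and the fact that the induced permutation of petals is a cyclic rotation (hence all cycle lengths coincide) --- and you cite it appropriately to \cite{CalvezYoccoz,LeRoux}. From that input the two-value shape $\ind(h^n,0)=1$ for $q\nmid n$ and $=v$ for $q\mid n$ follows, and your computation $i_q(h,0)=v-1$ via $\sum_{k\mid q}\mu(q/k)=0$ together with Theorem~\ref{DoldRel} cleanly gives the integrality of $\bar a_d=(v-1)/q$. The realization direction is again a citation, as in the paper. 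In short: your write-up supplies the bridge between the structural theorem and the stated periodic-expansion form, whereas the paper simply cites the end result; both are valid, and yours is a bit more informative without being longer than it needs to be.
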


In the following part of this section we show that for any integer sequences $(a_{n})_{n=1}^{\infty}$ and  integer $\bar{a}_{n}$ of Dold's coefficients, there is a boundary-preserving homeomorphism $f$ of $\mathbb{R}^{3}_{+}$ such that $(\ind(f^n,0), \ind(\bar{f}^n,0))_n$ is given by the formula (\ref{31}). The subsections \ref{canonical}-\ref{constr} are devoted to the construction of the desired map and in subsection \ref{proofThm1} we give the proof of the Theorem \ref{thm1}, i.e. we verify that the indices of the constructed map satisfy (\ref{31}).
\subsection{Canonical planar flows}\label{canonical}
Let us define a canonical planar flow $H_{p}\colon \mathbb{R}^{2}\times \mathbb{R}\to \mathbb{R}^{2}$ as the flow whose phase portrait around the point $0$ consists of: $2(1-p)$ hyperbolic sectors for $p<1$, $2(p-1)$ elliptic sectors for $p>1$ and a source at point $0$ for $p=1$ (Figure \ref{canflow}).\begin{figure}[H]
\def\svgwidth{1\columnwidth}
\centering
\begingroup%
  \makeatletter%
  \providecommand\color[2][]{%
    \errmessage{(Inkscape) Color is used for the text in Inkscape, but the package 'color.sty' is not loaded}%
    \renewcommand\color[2][]{}%
  }%
  \providecommand\transparent[1]{%
    \errmessage{(Inkscape) Transparency is used (non-zero) for the text in Inkscape, but the package 'transparent.sty' is not loaded}%
    \renewcommand\transparent[1]{}%
  }%
  \providecommand\rotatebox[2]{#2}%
  \newcommand*\fsize{\dimexpr\f@size pt\relax}%
  \newcommand*\lineheight[1]{\fontsize{\fsize}{#1\fsize}\selectfont}%
  \ifx\svgwidth\undefined%
    \setlength{\unitlength}{651.96850394bp}%
    \ifx\svgscale\undefined%
      \relax%
    \else%
      \setlength{\unitlength}{\unitlength * \real{\svgscale}}%
    \fi%
  \else%
    \setlength{\unitlength}{\svgwidth}%
  \fi%
  \global\let\svgwidth\undefined%
  \global\let\svgscale\undefined%
  \makeatother%
  \begin{picture}(1,0.34782609)%
    \lineheight{1}%
    \setlength\tabcolsep{0pt}%
    \put(0.11517616,0.31368034){\makebox(0,0)[lt]{\lineheight{1.25}\smash{\begin{tabular}[t]{l}$k=3$\end{tabular}}}}%
    \put(0.49767231,0.31368034){\makebox(0,0)[lt]{\lineheight{1.25}\smash{\begin{tabular}[t]{l}$k=-1$\end{tabular}}}}%
    \put(0.86695018,0.31368034){\makebox(0,0)[lt]{\lineheight{1.25}\smash{\begin{tabular}[t]{l}$k=1$\end{tabular}}}}%
    \put(0.09658796,0.02922354){\makebox(0,0)[lt]{\lineheight{1.25}\smash{\begin{tabular}[t]{l}$\text{elliptic}$\end{tabular}}}}%
    \put(0.45473159,0.02922354){\makebox(0,0)[lt]{\lineheight{1.25}\smash{\begin{tabular}[t]{l}$\text{hyperbolic}$\end{tabular}}}}%
    \put(0.83772438,0.02922354){\makebox(0,0)[lt]{\lineheight{1.25}\smash{\begin{tabular}[t]{l}$\text{source}$\end{tabular}}}}%
    \put(0,0){\includegraphics[width=\unitlength,page=1]{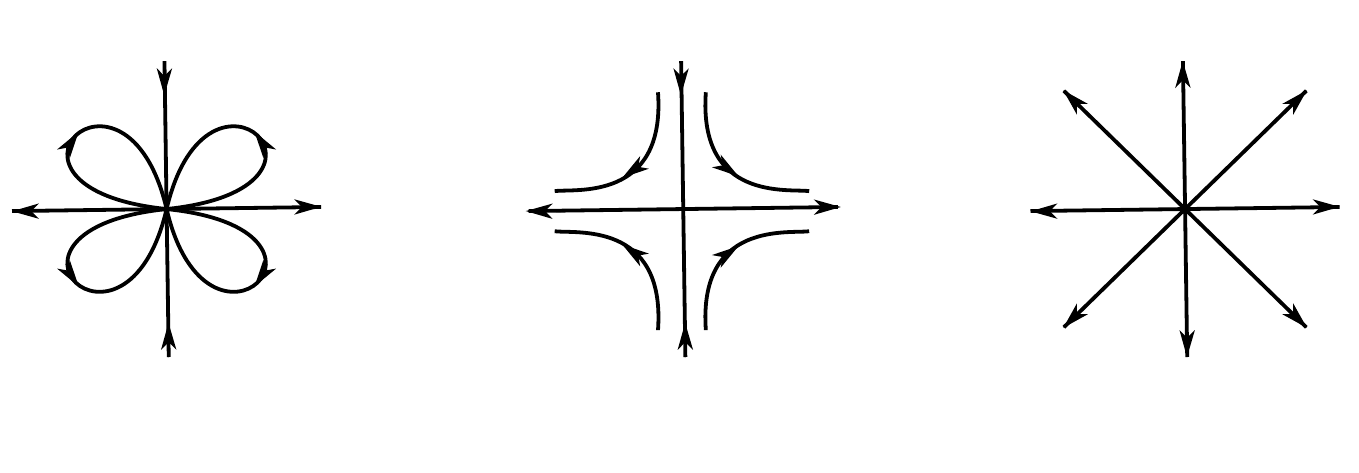}}%
  \end{picture}%
\endgroup%

\caption{Examples of various types of phase portraits of $H_{p}$.}
\label{canflow}
\end{figure}
Note that for any integer $p$ such flows can be described in the standard polar coordinates $(r,\varphi)$ by the following system of equations: 
    \begin{align}\label{system}
\begin{cases}\dot{r}=r\cos{(p-1)\varphi},\\
\dot{\varphi}=\sin{(p-1)\varphi}.\end{cases}
\end{align}
Now, we consider $h_{p}\colon \mathbb{R}^{2}\to \mathbb{R}^{2}$, a time-one map of the flow $H_{p}$. Note that by using the classical Poincar\'e-Bendixson index formula (see \cite{Bendixson}) $\ind(h^{n},0)=1+\frac{e-h}{2}=p$ for each $n$, where $h$ and $e$ denote numbers of hyperbolic and elliptic sectors around $0$, respectively. For a given integer $d>0$, let us define the planar rotation map $O$ by an angle $\frac{2\pi}{d}$ around the origin. Let $a_{d}$ be an arbitrary integer. Observe that for any integer $p$ of the form $1+a_{d}d$, the maps $O$ and $h_{p}$ commute. Therefore: \begin{align}
    \ind((O\circ h_{p})^{n},0)=\ind(O^{n}\circ h_{p}^{n},0)=\reg_{1}(n)+a_{d}\reg_{d}(n).
\end{align}
\subsection{Conical surfaces}\label{sectors}
Let us use the standard spherical coordinates in $\mathbb{R}^{3}_{+}$ given by: \begin{align} \begin{split}
    x&=r\cos{\phi}\cos{\theta},\\
    y&=r\sin{\phi}\cos{\theta},\\
    z&=r\sin{\theta},\end{split}
\end{align}
where $r\in [0,\infty)$ denotes the radial distance and $\varphi \in [0,2\pi]$, $\theta\in [0,\frac{\pi}{2}]$ represent the longitude and the latitude, respectively. We define the main and the intermediate conical surfaces as follows: \begin{align}\begin{split}\mathcal{C}_{k}&=\left\lbrace (r,\varphi,\theta_{k})\mathbin{:} \theta_{k}=\frac{\pi}{4}\left(1-\frac{1}{k}\right)\right\rbrace \ (\hbox{the $k$-th main conical surface}), \\ \mathcal{I}_{k}&=\left\lbrace (r,\varphi,\widetilde{\theta}_{k}) \mathbin{:} \widetilde{\theta}_{k}=\frac{\theta_{k+1}+\theta_{k}}{2} \right\rbrace \ (\hbox{the $k$-th intermediate conical surface}).\end{split}
\end{align} Note that these surfaces converges to the surface $\mathcal{B}=\lbrace (r,\varphi,\theta)\mathbin{:} \theta=\frac{\pi}{4}\rbrace$ (Figure \ref{Cone-Reg}).
\begin{figure}[!h]
\def\svgwidth{1\columnwidth}
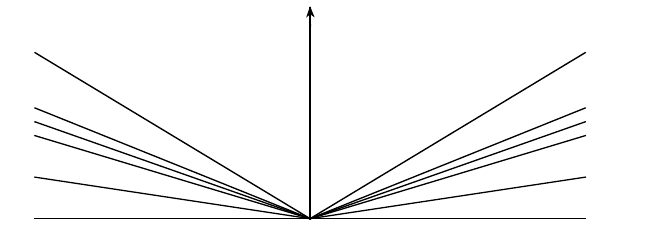
\caption{The main and the intermediate conical surfaces in $\mathbb{R}^{3}_{+}$.}
\label{Cone-Reg}
\end{figure}
We define canonical planar flows $H_{p_k}$ in each conical surface by adding the equation $\theta=\hbox{const}$, where $\theta\in[0,\frac{\pi}{2})$, to the system (\ref{system}). 
\subsection{Definition of the map realizing any given sequence of Dold's coefficients [formula \ref{map}]}\label{constr}
Throughout this subsection, we operate in the special version of conical coordinates $(x,\theta)$, where $x$ denotes a (two-dimensional) coordinate on a conical surface for a given angle $\theta$. Let us define three different maps $f_{1,2,3}\colon \mathbb{R}^{3}_{+}\to \mathbb{R}^{3}_{+}$  by the following formulas: \begin{align}\begin{split}
f_{1}(x,\theta)&=(\tau(\theta)x,\theta),\ \ f_{2}(x,\theta)=(x,\mu(\theta)), \\
\ \ f_{3}(x,\theta)&=\begin{cases}(H_{p_{k}}(x,\sigma(\theta)),\theta) \ &\hbox{for}\ \theta \in[\theta_{k},\widetilde{\theta}_{k}],\\ (H_{p_{k+1}}(x,\sigma(\theta)),\theta) \ &\hbox{for}\ \theta \in [\widetilde{\theta}_{k},\theta_{k+1}],\end{cases}\end{split}
\end{align}
where $\tau,\sigma\colon [0,\frac{\pi}{2}]\to [0,1]$ and $\mu \colon [0,\frac{\pi}{2}]\to [0,\frac{\pi}{2}]$ are one dimensional maps whose graphs are pictured in Figure \ref{maps}. For the map $\tau$, we choose a small  value $\varepsilon\in (0,\frac{1}{2})$.
\begin{figure}[h!] 
\def\svgwidth{0.45\columnwidth} 
\label{maps}
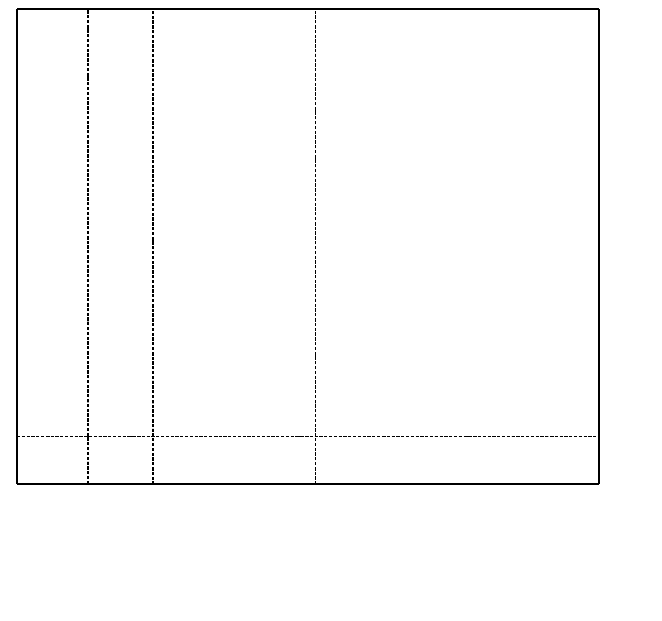
    \centering
    \subfloat{{\def\svgwidth{0.45\columnwidth}
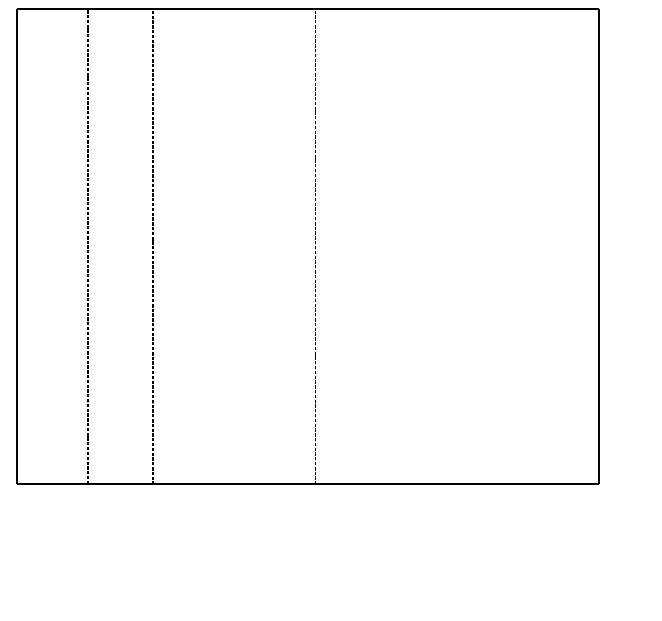}}%
    \qquad
    \subfloat{{\subfloat{{\def\svgwidth{0.45\columnwidth}
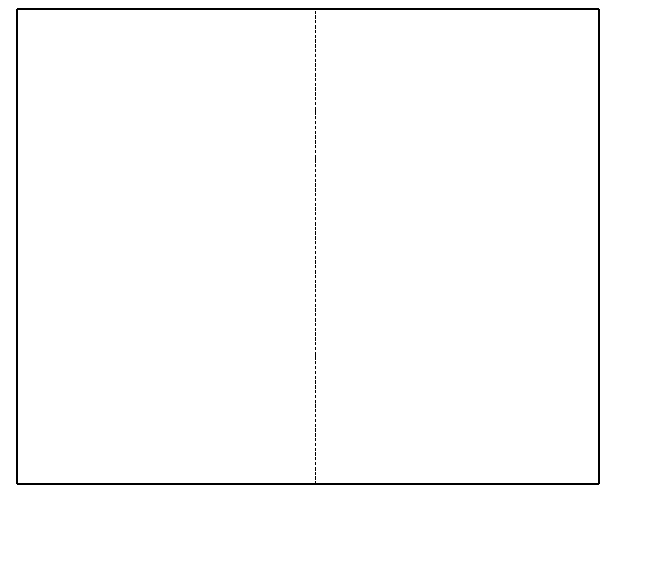}}}}%
\caption{The graphs of maps  $\tau$, $\sigma$ and $\mu$.}
\label{maps}
\end{figure}
\begin{remark}
The brief description of the geometrical meaning of these maps is the following. The  map $f_{1}$  generates a family of sink dynamics in the entire $\mathbb{R}^{3}_{+}$, in which the convergence rate depends on the value of the angle $\theta$. The map $f_{2}$ shifts all intermediate conical surfaces onto the main conical surfaces. The map $f_{3}$  describes the continuous change between discretization of the flows $H_{p}$ for different integers $p$. 
\end{remark}
Now, let $(a_{n})_{n}$ be an  arbitrary integer sequence of Dold's coefficients  and assume that it  has infinitely many non-zero terms, otherwise a sequence $( \ind(f^{n},0))_{n}$ is periodic and the construction proceeds analogously. Let us denote a subsequence of non-zero terms of Dold's coefficients as $(a_{u_{n}})_{n}$.  
To define the desire map, we need a below lemma that allow us to continuously change among different infinite family of rotations.

\vskip 5mm
\begin{lemma}\label{irrational}
For a given irrational number $r\in(0,1)$ there exists a sequence $( s_{n})_{n}=(\frac{v_{n}}{u_{n}})_{n}$ of rational numbers, with $u_{n}$ and $v_{n}$ co-prime, converging to $r$.
\end{lemma}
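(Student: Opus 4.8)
The plan is to rely on only two elementary facts: that $\mathbb{Q}$ is dense in $\mathbb{R}$, and that every rational number has a unique representation as a fraction in lowest terms. Put together these immediately produce the required sequence, the coprimality requirement becoming automatic; so the lemma is essentially a normalization statement and there is no real obstacle to overcome.

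Concretely, for each $n\in\mathbb{N}$ I would choose a rational number $q_n$ with $|q_n-r|<\tfrac1n$. Such a $q_n$ exists because the interval $\bigl(r-\tfrac1n,\,r+\tfrac1n\bigr)$ is a nonempty open interval of reals and hence contains a rational; since $r\in(0,1)$, one may even arrange $q_n\in(0,1)$ by choosing $q_n$ inside $\bigl(\max\{0,r-\tfrac1n\},\,\min\{1,r+\tfrac1n\}\bigr)$, which is again a nonempty open interval — though the statement imposes no such constraint on the $s_n$. Next I would write each $q_n$ in lowest terms: there are unique $u_n\in\mathbb{N}$ and $v_n\in\mathbb{Z}$ with $q_n=\tfrac{v_n}{u_n}$ and $\gcd(u_n,v_n)=1$ (with $1\le v_n<u_n$ if one kept $q_n\in(0,1)$). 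Setting $s_n:=q_n=\tfrac{v_n}{u_n}$, the pair $(u_n,v_n)$ is coprime by construction and $|s_n-r|=|q_n-r|<\tfrac1n\to 0$, so $(s_n)_n$ converges to $r$, which is exactly the assertion.

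I do not anticipate any difficulty; the single point needing a word of care is that ``coprime'' is built into the reduced fraction, so no separate argument is required. For completeness one may also observe — although the lemma does not ask for it, and it is this stronger bookkeeping that one must be careful not to conflate with the present statement — that the irrationality of $r$ automatically forces $u_n\to\infty$: if a subsequence of $(u_n)$ were bounded by some $M$, then along that subsequence the values $s_n$ would lie in the finite set of reduced fractions with denominator at most $M$, and a convergent sequence taking values in a finite set is eventually constant, hence eventually equal to its limit $r$, contradicting $r\notin\mathbb{Q}$.
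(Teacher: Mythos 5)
Your argument proves only the literal, self-contained reading of the statement, under which it is indeed trivial: pick any rational in $\bigl(r-\tfrac1n,r+\tfrac1n\bigr)$ and reduce it to lowest terms. But in the paper the symbols $u_{n}$ are not free parameters: immediately before the lemma the sequence $(u_{n})_{n}$ is fixed as the indices of the non-zero Dold coefficients, i.e. the subsequence $(a_{u_{n}})_{n}$, and the lemma is then used to produce rotation angles $2\pi\frac{v_{n}}{u_{n}}$ on the main conical surfaces with exactly these prescribed denominators. Coprimality of $v_{n}$ with the \emph{given} $u_{n}$ is what forces the rotation to have order precisely $u_{n}$, so that the corresponding surface contributes $a_{u_{n}}\reg_{u_{n}}$ to the index, while the convergence $\frac{v_{n}}{u_{n}}\to r$ is what makes the piecewise-linear map $\psi$ (and hence $\bar{O}$) continuous at $\theta=\frac{\pi}{4}$. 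Your construction gives no control whatsoever over which denominators survive after reducing $q_{n}$ to lowest terms, so it cannot serve the purpose for which the lemma is invoked; the genuine content is that for an \emph{arbitrary prescribed} sequence $u_{n}\to\infty$ one can choose numerators $v_{n}$ coprime to $u_{n}$ with $\frac{v_{n}}{u_{n}}\to r$, and that is not addressed by density of $\mathbb{Q}$ plus normalization.

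This is precisely where the paper's proof does work that yours does not: for the given $u_{n}$ it takes the largest fraction $\frac{v_{n}}{u_{n}}<r$ with $v_{n}$ coprime to $u_{n}$, bounds the approximation error by $\frac{j(u_{n})}{u_{n}}$, where $j$ is the Jacobsthal function (the maximal gap between consecutive integers coprime to $u_{n}$), and then uses Iwaniec's estimate $j(n)\leq \ln^{2}(n)$ to conclude $\frac{j(u_{n})}{u_{n}}\to 0$. Your closing remark that irrationality of $r$ forces your denominators to tend to infinity concerns the denominators produced by reduction, not the prescribed $u_{n}$, so it does not repair the argument. To fix the proposal you would have to restate and prove the lemma with the denominators $u_{n}$ given in advance, along the lines just described.
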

\begin{proof}
 For sufficiently large $n$ (and all consecutive ones) we choose $\frac{v_{n}}{u_{n}}$ in such a way that $v_{n}$ and $u_{n}$ are co-prime and $\frac{v_{n}}{u_{n}}$ is the largest number of this form less than $r$. Next consider another number $w_{n}>v_{n}$ co-prime with $u_{n}$ such that $\frac{w_{n}}{u_{n}}$ is the smallest number of this form greater than $r$. Therefore, the following inequality \begin{align}
     \left|r-\frac{v_{n}}{u_{n}}\right |\leq \left|\frac{w_{n}}{u_{n}}-\frac{v_{n}}{u_{n}}\right| \leq \frac{j(u_{n})}{u_{n}}
 \end{align}
holds, where $j\colon \mathbb{N}\to \mathbb{N}$ denotes the Jacobsthal function. The function $j(n)$ is defined as the  maximal gap in the list of all the integers co-prime to $n$. In \cite{iwaniec} H. Iwaniec showed that for all natural $n$ this function is bounded from above as follows: \begin{align}
     j(n)\leq \ln^{2}(n).
 \end{align}
 Thus, we can note that \begin{align}
     \lim_{n\to \infty} \frac{j(u_{n})}{u_{n}}=0
 \end{align}
 and hence $\frac{v_{n}}{u_{n}}\to r$ as $n\to \infty$. 
 \end{proof}
Now, we define the rotation map $\bar{O}\colon \mathbb{R}^{3}_{+}\to \mathbb{R}^{3}_{+}$ given by the formula $\bar{O}(r,\varphi,\theta)=(r,\varphi+\psi(\theta),\theta)$, where $\psi\colon [0,\frac{\pi}{2}]\to [0,2\pi]$ is a piecewise-linear map that connects  points $(\theta_{k},\psi(\theta_{k}))$ in the $2$-dimensional graph   of $\psi$ in the following order:$ \ (0,0),(\theta_{1},\frac{2\pi}{d})$, $(\theta_{2},\frac{2\pi}{d})$, $(\theta_{3}, 0)$,  $(\theta_{4}, 2\pi \frac{v_{1}}{u_{1}})$, $(\theta_{5}, 2\pi \frac{v_{2}}{u_{2}})$, $\ldots$ $(\frac{\pi}{4},2\pi s)$, $(\frac{\pi}{2},0)$ (Figure \ref{psi}).

 \begin{figure}[h!] 
\def\svgwidth{0.45\columnwidth} 
\begingroup%
  \makeatletter%
  \providecommand\color[2][]{%
    \errmessage{(Inkscape) Color is used for the text in Inkscape, but the package 'color.sty' is not loaded}%
    \renewcommand\color[2][]{}%
  }%
  \providecommand\transparent[1]{%
    \errmessage{(Inkscape) Transparency is used (non-zero) for the text in Inkscape, but the package 'transparent.sty' is not loaded}%
    \renewcommand\transparent[1]{}%
  }%
  \providecommand\rotatebox[2]{#2}%
  \newcommand*\fsize{\dimexpr\f@size pt\relax}%
  \newcommand*\lineheight[1]{\fontsize{\fsize}{#1\fsize}\selectfont}%
  \ifx\svgwidth\undefined%
    \setlength{\unitlength}{317.48031496bp}%
    \ifx\svgscale\undefined%
      \relax%
    \else%
      \setlength{\unitlength}{\unitlength * \real{\svgscale}}%
    \fi%
  \else%
    \setlength{\unitlength}{\svgwidth}%
  \fi%
  \global\let\svgwidth\undefined%
  \global\let\svgscale\undefined%
  \makeatother%
  \begin{picture}(1,0.84821429)%
    \lineheight{1}%
    \setlength\tabcolsep{0pt}%
    \put(0,0){\includegraphics[width=\unitlength,page=1]{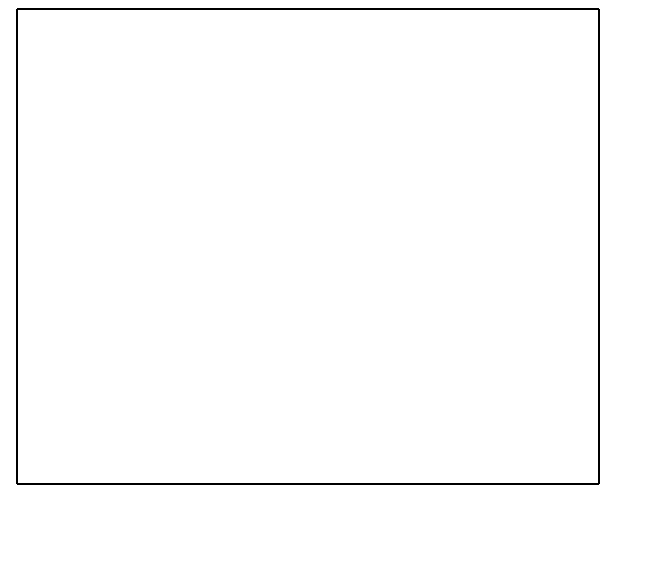}}%
    \put(0.01205357,0.00994315){\makebox(0,0)[lt]{\lineheight{1.25}\smash{\begin{tabular}[t]{l}$0$\end{tabular}}}}%
    \put(0.79368928,0.00994469){\makebox(0,0)[lt]{\lineheight{1.25}\smash{\begin{tabular}[t]{l}$\frac{\pi}{4}$\end{tabular}}}}%
    \put(0.89406176,0.00994469){\makebox(0,0)[lt]{\lineheight{1.25}\smash{\begin{tabular}[t]{l}$\frac{\pi}{2}$\end{tabular}}}}%
    \put(0.02857021,0.77971886){\makebox(0,0)[lt]{\lineheight{1.25}\smash{\begin{tabular}[t]{l}$2\pi s$\end{tabular}}}}%
    \put(0,0){\includegraphics[width=\unitlength,page=2]{psi.pdf}}%
    \put(0.54499367,0.6949617){\makebox(0,0)[lt]{\lineheight{1.25}\smash{\begin{tabular}[t]{l}$\psi$\end{tabular}}}}%
    \put(0,0){\includegraphics[width=\unitlength,page=3]{psi.pdf}}%
  \end{picture}%
\endgroup%

\caption{The example of graph of $\psi$.}
\label{psi}
\end{figure}

  Finally,   we define a boundary-preserving homeomorphism of $\mathbb{R}^{3}_{+}$ as the following composition:
    
    \begin{equation}\label{map}
    f \colon \mathbb{R}^{3}_{+}\to \mathbb{R}^{3}_{+},\ f=\bar{O}\circ f_{1}\circ f_{2}\circ f_{3}.
\end{equation}
We will show in the forthcoming part of the paper that $f$ satisfies the equalities (\ref{31}).
\subsection{Computation of indices}
In this subsection, we prove two lemmas related to the computations of indices of iterates, that will be used in the proof of Theorem \ref{thm1}.
Foremost, we remind the theorem which relates the indices of a map defined on the given space  with indices of its decomposition.
\begin{theorem}[Mayer-Vietoris formula, cf.~ \cite{BaWo}]\label{May-Viet} {\normalfont Let $Y=A\cup B$ be a topological space, $p\in A\cup B$ and $U$ be an open neighborhood of $p$ in $Y$. Let $F\colon U\rightarrow Y$, $F(U\cap A)\subset A$, $F(U\cap B)\subset B$. If $p$ is an isolated fixed point of $F$ and $U$, $U\cap A$, $U\cap B$ and $U\cap A \cap B$ are ENR's, then: \begin{align}
\ind(F,p)= \ind(F_{A},p)+\ind(F_{B},p)-\ind(F_{A\cap B},p), \label{May-viet1}
\end{align}
where $F_{X}$ denotes $F$ restricted to $X$.}
\end{theorem}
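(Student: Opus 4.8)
\emph{Proof plan.} The plan is to establish this Mayer--Vietoris formula (quoted here from \cite{BaWo}) by reducing the purely local statement to the Hopf trace formula applied to a Mayer--Vietoris homology sequence on which $F$ acts. First I would exploit that the formula is local at $p$: by the localization (excision) property of the fixed point index, shrinking $U$ to a smaller open neighborhood of $p$ changes none of the four numbers $\ind(F,p)$, $\ind(F_A,p)$, $\ind(F_B,p)$, $\ind(F_{A\cap B},p)$. Hence I may assume $\Fix(F)\cap U=\{p\}$ and, since every ENR has a neighborhood basis of compact ENR's (and since we may take $A$ and $B$ closed), that the closures of $U$, $U\cap A$, $U\cap B$, $U\cap A\cap B$ are compact ENR's. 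After this reduction each of the four indices equals the index of the corresponding restriction over its whole domain, and the fixed point set is $\{p\}$ in all four cases.

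The second step is to pass to Lefschetz numbers. I would use the fact that the fixed point index of a germ with a single isolated fixed point on an ENR equals the associated (generalized) Lefschetz number: for each $X\in\{U,\,U\cap A,\,U\cap B,\,U\cap A\cap B\}$,
\[
\ind(F_X,p)\;=\;\Lambda(F_X)\;:=\;\sum_{i\ge 0}(-1)^{i}\,\mathrm{tr}\!\big((F_X)_{*i}\colon H_i(X;\mathbb{Q})\to H_i(X;\mathbb{Q})\big),
\]
where $X$ is to be replaced by a compact ENR on which $F_X$ induces a genuine self-map carrying the dynamics near $p$; this is the Lefschetz--Hopf theorem, and the passage from the (possibly non-self-map, non-compact) germ to such a compact self-map is obtained from the homotopy, additivity and commutativity axioms of the index together with an embedding of the ENR's as retracts of open subsets of Euclidean space. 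The point I would be most careful about is to carry out this reduction compatibly for all four spaces at once, so that $F$ induces a single system of self-maps on compact models of $A$, $B$, $A\cap B$, $A\cup B$ that is strictly natural with respect to the inclusions.

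The third step is homological. I would feed the Mayer--Vietoris long exact sequence (over $\mathbb{Q}$)
\[
\cdots\longrightarrow H_i(A\cap B)\longrightarrow H_i(A)\oplus H_i(B)\longrightarrow H_i(A\cup B)\longrightarrow H_{i-1}(A\cap B)\longrightarrow\cdots
\]
— which is exact because the ENR hypotheses make $(A,B)$ an excisive couple, and natural, hence equivariant for the endomorphism induced by $F$, which preserves $A$, $B$, $A\cap B$, $A\cup B$ — into the Hopf trace formula: the alternating sum of traces of an equivariant endomorphism around a finite exact sequence of finitely generated vector spaces vanishes. This gives
\[
\Lambda(F_{A\cap B})-\Lambda(F_A)-\Lambda(F_B)+\Lambda(F_{A\cup B})=0,
\]
which, combined with the identification of the second step, is precisely (\ref{May-viet1}). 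A variant avoiding Lefschetz numbers would instead use the relative fixed point index of the pairs $(A,A\cap B)$ and $(A\cup B,B)$ together with the excision isomorphism $(A\cup B)/B\cong A/(A\cap B)$, yielding $\ind(F;A)-\ind(F;A\cap B)=\ind(F;A\cup B)-\ind(F;B)$ directly; one would still invoke the localization step to guarantee that the excision hypothesis on the fixed point set is met.

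I expect the main obstacle to be the second step. The index $\ind(F_X,p)$ is \emph{a priori} only a local invariant of a germ that need not be a self-map and whose natural domain need not be compact, whereas both the Lefschetz--Hopf theorem and the Hopf trace formula require a self-map of a compact ENR; making this reduction rigorous — and, above all, simultaneously for the four spaces, so that the four Lefschetz numbers are genuinely the traces on the four terms of one and the same $F$-equivariant Mayer--Vietoris sequence — is where the real work lies. Once that is in place, the first and third steps are routine.
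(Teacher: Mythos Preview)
The paper does not prove this theorem at all: it is quoted from \cite{BaWo} as a known result (note the ``cf.''\ in the attribution) and used as a black box in the subsequent index computations. There is therefore no ``paper's own proof'' to compare your proposal against.

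As for the proposal itself, the overall strategy is sound and is one of the standard routes to this kind of additivity formula. Your identification of the real difficulty is accurate: the passage from the local index of a germ $F\colon U\to Y$ (not a self-map, not compact) to a genuine self-map of a compact ENR, done \emph{coherently} for all four spaces so that $F$ acts on a single Mayer--Vietoris sequence, is exactly where the work is. A few remarks. First, your parenthetical ``since we may take $A$ and $B$ closed'' is not part of the hypotheses and need not be true in general; what you actually need is that $\{\operatorname{int}A,\operatorname{int}B\}$ (or suitable compact models thereof) form an excisive couple, which the ENR hypotheses do give after shrinking. Second, the Lefschetz--Hopf identification $\ind(F_X,p)=\Lambda(F_X)$ requires finite-dimensional homology and a compact self-map; your sketch gestures at this but does not produce the compact models. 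In practice one retracts a Euclidean neighborhood onto each ENR and uses commutativity of the index, but arranging compatible retractions for $A$, $B$, $A\cap B$, $A\cup B$ simultaneously is a genuine technical step that your plan leaves unspecified. Third, your ``variant'' via relative indices and the excision $(A\cup B)/B\cong A/(A\cap B)$ is in fact closer to how this is actually done in \cite{BaWo}, and is cleaner than the Lefschetz-number route precisely because it avoids the compactness and self-map issues: it works directly with the axiomatics of the index (additivity, excision, commutativity) rather than passing through traces. If you were to write this up, I would recommend pursuing that variant instead.
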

Let $k\geq 1$. We define the following regions: \begin{align}\begin{split}
&\mathcal{F}_{1}=\lbrace (r,\varphi,\theta)\mathbin{:}\theta \in[0,\widetilde{\theta}_{1}]\rbrace, \ \mathcal{F}_{k+1}=\lbrace (r,\varphi,\theta)\mathbin{:}\theta\in [\widetilde{\theta}_{k},\widetilde{\theta}_{k+1}]\rbrace,\ \mathcal{G}_{l}=\bigcup_{k=1}^{l}\mathcal{F}_{k}, \\
&\mathcal{G}=\lbrace (r,\varphi,\theta)\mathbin{:}\theta\in [0,\pi/4]\rbrace, \ \mathcal{H}=\lbrace (r,\varphi,\theta)\mathbin{:} \theta\in[\pi/4,\pi/2]\rbrace.\end{split} \end{align}
\begin{lemma}\label{Retr-Lem} The equality $\ind(f^{n}_{\mathcal{F}_{k}},0)=\ind(f^{n}_{\mathcal{C}_{k}},0)$ is satisfied for each $k\geq 1$.
\end{lemma}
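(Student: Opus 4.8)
The plan is to exhibit a deformation retraction of a neighborhood of $0$ in $\mathcal{F}_k$ onto a neighborhood of $0$ in $\mathcal{C}_k$ that is equivariant with respect to (the appropriate restriction of) $f$, and then invoke homotopy/commutativity invariance of the fixed point index. First I would recall the structure of $f=\bar O\circ f_1\circ f_2\circ f_3$ restricted to the region $\mathcal{F}_k$: the map $f_2$ has the effect of collapsing the $\theta$-coordinate of every point of $\mathcal{F}_k$ onto $\theta_k$ (this is precisely what $\mu$ does on the interval $[\widetilde\theta_{k-1},\widetilde\theta_k]$, since $\mu$ sends this whole interval to $\theta_k$, by the shape of its graph in Figure \ref{maps}), so that $f(\mathcal{F}_k)\subset \mathcal{C}_k$. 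Thus $f_{\mathcal{F}_k}$ already factors through the inclusion $\mathcal{C}_k\hookrightarrow \mathcal{F}_k$: writing $\iota\colon\mathcal C_k\to\mathcal F_k$ for the inclusion and $g\colon\mathcal F_k\to\mathcal C_k$ for $f_{\mathcal F_k}$ with codomain restricted to $\mathcal C_k$, we have $f_{\mathcal F_k}=\iota\circ g$ while $f_{\mathcal C_k}=g\circ\iota$.

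The key step is then the commutativity property of the fixed point index: for maps $g\colon U\to \mathcal C_k$ and $\iota\colon \mathcal C_k\to \mathcal F_k$ with $0$ an isolated fixed point of both $\iota\circ g$ and $g\circ\iota$, one has $\ind(\iota\circ g, 0)=\ind(g\circ\iota,0)$ (see \cite{Marzan} for this standard property, valid on ENRs). Applying this to each iterate — i.e. observing that $f^n_{\mathcal F_k}=(\iota\circ g)^n=\iota\circ(g\circ\iota)^{n-1}\circ g$ and $f^n_{\mathcal C_k}=(g\circ\iota)^n$ are again a commuting pair of composites of the same two maps — yields $\ind(f^n_{\mathcal F_k},0)=\ind(f^n_{\mathcal C_k},0)$ for every $n\geq 1$. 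I would check that $0$ is indeed isolated as a fixed point for all iterates on both $\mathcal F_k$ and $\mathcal C_k$; this follows since $0$ is an isolated fixed point of each iterate of $f$ on all of $\mathbb R^3_+$, hence a fortiori on any $f$-forward-invariant subset containing it, and $\mathcal C_k$, $\mathcal F_k$ are such subsets by the remark above.

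The main obstacle I anticipate is the bookkeeping needed to confirm the factorization $f(\mathcal F_k)\subseteq\mathcal C_k$ rigorously for the chosen auxiliary maps. One must verify that $f_2$ really does collapse $\mathcal F_k=\{\theta\in[\widetilde\theta_{k-1},\widetilde\theta_k]\}$ onto the slice $\theta=\theta_k$ (reading this off the precise definition of $\mu$), that the subsequent maps $f_1$ (scaling in $x$, fixing $\theta$), $f_3$ (acting in $x$ only, $\theta$ fixed — applied first, before $f_2$, so one should track the order $f_3,f_2,f_1,\bar O$ carefully), and $\bar O$ (rotating $\varphi$, fixing $\theta$) all preserve the $\theta$-level once it has been set to $\theta_k$, and that none of these introduces new intersection points with the relevant sets needed for the index to be defined. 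A cleaner alternative, if the strict factorization is awkward because of the composition order, is to replace the literal identity $f_{\mathcal F_k}=\iota\circ g$ by a straight-line homotopy in the $\theta$-coordinate from $f_{\mathcal F_k}$ to a map landing in $\mathcal C_k$, keeping $0$ the only fixed point throughout (which is automatic since the $x$-dynamics near $0$ is a contraction composed with a rotation/$H_p$-time-one map, having $0$ as its unique fixed point on each slice), and then apply homotopy invariance of the index together with the commutativity argument; either route reduces the lemma to standard index properties.
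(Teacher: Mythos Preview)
Your primary approach has a genuine gap, not merely a bookkeeping issue. You claim that $\mu$ sends the whole interval $[\widetilde\theta_{k-1},\widetilde\theta_k]$ to the single value $\theta_k$, so that $f(\mathcal F_k)\subset\mathcal C_k$ and the commutativity trick applies. But $f$ is explicitly constructed as a \emph{homeomorphism} of $\mathbb R^3_+$; hence $f_2(x,\theta)=(x,\mu(\theta))$ must be a homeomorphism as well, forcing $\mu$ to be a homeomorphism of $[0,\pi/2]$. A homeomorphism of an interval cannot collapse a nondegenerate subinterval to a point, so the factorization $f_{\mathcal F_k}=\iota\circ g$ with $g$ landing in $\mathcal C_k$ is simply false. (The remark that $f_2$ ``shifts intermediate conical surfaces onto main conical surfaces'' means $\mu(\widetilde\theta_k)$ equals some $\theta_j$, not that $\mu$ is constant on the interval.) Consequently the commutativity argument, as you set it up, does not go through.

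Your fallback alternative---a straight-line homotopy in the $\theta$-coordinate---is correct, and it is exactly what the paper does. The paper sets
\[
H_t^{(n)}(r,\varphi,\theta)=f^n\bigl(r,\varphi,(1-t)\theta+t\theta_k\bigr),
\]
a homotopy from $f^n_{\mathcal F_k}$ to $f^n\circ R$ with $R(r,\varphi,\theta)=(r,\varphi,\theta_k)$ the retraction onto $\mathcal C_k$, checks that $0$ is the only fixed point for every $t$, and then uses homotopy invariance together with the retraction/commutativity property of the index to obtain $\ind(f^n\circ R,0)=\ind(f^n_{\mathcal C_k},0)$. So once you abandon the erroneous collapse claim, your second route coincides with the paper's proof; just be aware that the homotopy step is essential and not optional.
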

\begin{proof}
Let $U$ be an open neighborhood of $0$ in $\mathcal{F}_{k}$. Consider the homotopy $H^{(n)} \colon U\times I\to \mathcal{F}_{k}$ given by the following formula: $H_{t}^{(n)}(r,\varphi,\theta)=f^{n}(r,\varphi, (1-t)\theta+t\theta_{k})$.  Note that this homotopy connects maps $f^{n}$ and $f^{n}\circ R$ in $\mathcal{F}_{k}$, where $R(r,\varphi,\theta)=(r,\varphi,\theta_{k})$ is the retraction map from $\mathcal{F}_{k}$ to $\mathcal{C}_{k}$. It remains to observe that $0$ is the only fixed point of $H_{t}^{(n)}$ for each $t\in I$. Therefore, due to the homotopy invariant property of the fixed point index and the definition of index,  we obtain a sequence of equalities:\begin{align}\ind(f^{n}_{\mathcal{F}_{k}},0)=\ind(H_{0}^{(n)},0)=\ind(H_{1}^{(n)},0)=\ind((f^{n}_{\mathcal{F}_{k}}\circ R),0)=\ind(f^{n}_{\mathcal{C}_{k}},0).\end{align}
\end{proof}
\begin{lemma}\label{induction}
The following equality: \begin{align}\label{lem} \ind(f^{n}_{\mathcal{G}_{l}},0)=\sum_{k=1}^{l}\ind(f^{n}_{\mathcal{C}_{k}},0)-\sum_{k=1}^{l-1}\ind(f^{n}_{\mathcal{I}_{k}},0)\end{align}
is satisfied for all $l\geq 2$.
\end{lemma}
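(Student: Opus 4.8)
The plan is to induct on $l$, applying at each step the Mayer--Vietoris formula of Theorem~\ref{May-Viet} to the map $F=f^{n}$ and then invoking Lemma~\ref{Retr-Lem}. Before starting I would record two elementary facts about the regions involved. First, describing a region by the range of the latitude $\theta$ of its points, $\mathcal{G}_{l}$ is $\{\theta\in[0,\widetilde{\theta}_{l}]\}$ and $\mathcal{F}_{l+1}$ is $\{\theta\in[\widetilde{\theta}_{l},\widetilde{\theta}_{l+1}]\}$, so that $\mathcal{G}_{l+1}=\mathcal{G}_{l}\cup\mathcal{F}_{l+1}$ while $\mathcal{G}_{l}\cap\mathcal{F}_{l+1}=\{\theta=\widetilde{\theta}_{l}\}=\mathcal{I}_{l}$. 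Second, since $f$ maps each slab $\mathcal{F}_{k}$ into itself --- the very property already used in the proof of Lemma~\ref{Retr-Lem} --- it also preserves $\mathcal{G}_{l}=\bigcup_{k\le l}\mathcal{F}_{k}$, the slab $\mathcal{F}_{l+1}$, and the surface $\mathcal{I}_{l}=\mathcal{G}_{l}\cap\mathcal{F}_{l+1}$; hence so does every iterate $f^{n}$, and all the restricted maps written below make sense.

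For the base case $l=2$ I would invoke Theorem~\ref{May-Viet} with $Y=\mathcal{G}_{2}$, $A=\mathcal{F}_{1}$, $B=\mathcal{F}_{2}$, $p=0$ and $F=f^{n}$. The three hypotheses are met: $0$ is an isolated fixed point of $f^{n}$ by assumption; the sets $\mathcal{F}_{1}$, $\mathcal{F}_{2}$ and $\mathcal{F}_{1}\cap\mathcal{F}_{2}=\mathcal{I}_{1}$ are ENR's, being polyhedral subsets of $\mathbb{R}^{3}_{+}$; and the invariance noted above provides $f^{n}(U\cap A)\subset A$ and $f^{n}(U\cap B)\subset B$. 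Formula~\eqref{May-viet1} then reads
\[
\ind(f^{n}_{\mathcal{G}_{2}},0)=\ind(f^{n}_{\mathcal{F}_{1}},0)+\ind(f^{n}_{\mathcal{F}_{2}},0)-\ind(f^{n}_{\mathcal{I}_{1}},0),
\]
and rewriting the first two indices via Lemma~\ref{Retr-Lem} gives exactly \eqref{lem} for $l=2$.

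For the inductive step I would assume \eqref{lem} for some $l\ge 2$ and apply Theorem~\ref{May-Viet} once more, this time with $A=\mathcal{G}_{l}$, $B=\mathcal{F}_{l+1}$, hence $A\cup B=\mathcal{G}_{l+1}$ and $A\cap B=\mathcal{I}_{l}$; the same three hypotheses hold for the same reasons. This yields
\[
\ind(f^{n}_{\mathcal{G}_{l+1}},0)=\ind(f^{n}_{\mathcal{G}_{l}},0)+\ind(f^{n}_{\mathcal{F}_{l+1}},0)-\ind(f^{n}_{\mathcal{I}_{l}},0).
\]
Feeding the inductive hypothesis into the first term on the right, Lemma~\ref{Retr-Lem} (with $k=l+1$) into the second, and shifting the summation indices, the right-hand side collapses to $\sum_{k=1}^{l+1}\ind(f^{n}_{\mathcal{C}_{k}},0)-\sum_{k=1}^{l}\ind(f^{n}_{\mathcal{I}_{k}},0)$, which is \eqref{lem} with $l$ replaced by $l+1$, closing the induction.

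I expect the only delicate point to be the verification of the Mayer--Vietoris hypotheses at each stage, and in particular the invariance $f(\mathcal{F}_{k})\subset\mathcal{F}_{k}$ that both justifies restricting $f^{n}$ to these pieces and forces $\mathcal{G}_{l}\cap\mathcal{F}_{l+1}$ to be exactly $\mathcal{I}_{l}$: this depends on the precise profiles of the auxiliary one-dimensional maps $\mu$ and $\sigma$ entering the definition~\eqref{map} of $f$, so that the conical surfaces $\mathcal{I}_{k}$ are genuinely fixed as sets and form clean gluing loci. The ENR requirement, by contrast, is harmless, since every region in play is a finite union of polyhedral cones in $\mathbb{R}^{3}_{+}$.
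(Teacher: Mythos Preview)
Your proof is correct and follows exactly the same route as the paper's: induction on $l$, with the base case $\mathcal{G}_{2}=\mathcal{F}_{1}\cup\mathcal{F}_{2}$ and the inductive decomposition $\mathcal{G}_{l+1}=\mathcal{G}_{l}\cup\mathcal{F}_{l+1}$, applying the Mayer--Vietoris formula~\eqref{May-viet1} at each stage and Lemma~\ref{Retr-Lem} to pass from $\mathcal{F}_{k}$ to $\mathcal{C}_{k}$. Your write-up is in fact more careful than the paper's, which leaves the verification of the Mayer--Vietoris hypotheses (in particular the invariance $f(\mathcal{F}_{k})\subset\mathcal{F}_{k}$ coming from the profile of $\mu$) implicit.
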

\begin{proof}
The proof is inductive in respect to an integer $l>0$ and is based on the formula \ref{May-viet1} and Lemma $\ref{Retr-Lem}$. Let us first fix $l=2$, hence $\mathcal{G}_{2}=\mathcal{F}_{1}\cup \mathcal{F}_{2}$ and $\ind(f^{n}_{\mathcal{G}_{2}},0)=\ind(f^{n}_{\mathcal{C}_{1}},0)+\ind(f^{n}_{\mathcal{C}_{2}},0)-\ind(f^{n}_{\mathcal{I}_{1}},0)$. Now, we can assume that the equality (\ref{lem}) is satisfied for a fixed $l>2$ and consider the value of $\ind(f^{n}_{\mathcal{G}_{l+1}},0)$. Decomposing the region $\mathcal{G}_{l+1}$ into $\mathcal{G}_{l}\cup \mathcal{F}_{l+1}$ and using the induction step and the formula \ref{May-viet1} once again, we get the final result. 
\end{proof}
\begin{remark}
    Let us note that $\ind(f^{n}_{\mathcal{I}_{k}},0)=\reg_{1}(n)$ for each integer $k>0$, since $f^{n}_{\mathcal{I}_{k}}$ is a sink.
\end{remark}
\subsection{Proof of Theorem \ref{thm1}}\label{proofThm1}
First, we define integers $p_{k}$ assigned to the map $f_{3}$ in the following way. Let $p_{1}=1+\bar{a}_{d}d$ and $p_{2}=1-\bar{a}_{d}d$ for a fixed integer $d>0$, $p_{3}=0$ and  $p_{k+3}=1+a_{u_{k}}u_{k}$ for $k\geq 1$. Applying Lemma $\ref{induction}$, we calculate that: \begin{align}\ind(f^{n}_{\mathcal{G}_{l}},0)=\sum_{k=1}^{l}a_{u_{k}}\reg_{u_{k}}(n)\end{align}
and \begin{align}\ind(f^{n}_{\mathcal{G}},0)=\sum_{k=1}^{\infty}a_{u_{k}}\reg_{u_{k}}(n)\end{align} as $l\to \infty$. Since all Dold's coefficients $a_{k}$ other than $a_{u_{k}}$ are equal to zero, we can write without loss of generality that: \begin{align}\label{coeff}\ind(f^{n}_{\mathcal{G}},0)=\sum_{k=1}^{\infty}a_{k}\reg_{k}(n).\end{align} It remains to note that $\ind(f^{n}_{\mathcal{H}},0)=\reg_{1}(n)$ because $f^{n}_{\mathcal{H}}$ is homotopic to the map $w(r,\theta,\phi)=(\frac{1}{2}r,\theta,\phi)$ by the standard linear homotopy $H_{t}(r,\varphi,\theta)=(1-t)f^{n}(r,\varphi,\theta)+t(\frac{1}{2}r,\varphi,\theta)$. Note that $H_{t}$ does not have any other fixed points than $0$. Thus: \begin{align}
\ind(f^{n},0)&=\ind(f^{n}_{\mathcal{G}\cup\mathcal{H}},0)=\ind(f^{n}_{\mathcal{G}},0)+\ind(f^{n}_{\mathcal{H}},0)-\reg_{1}(n)=\sum_{k=1}^{\infty}a_{k}\reg_{k}(n).
\end{align} Finally, observe that by our construction we have: \begin{align}\ind(\bar{f}^{n},0)=\ind(f^{n}_{\mathcal{C}_{1}},0)=\reg_{1}(n)+\bar{a}_{d}\reg_{d}(n),\end{align} which completes the proof of Theorem \ref{thm1}.
\vskip 5mm
As in  $\mathbb{R}^{4}_{+}$ and in higher dimensions there is a lot of room for the analogous constructions, we may repeat it, realizing in $\partial \mathbb{R}^{4}_{+} \approx \mathbb{R}^{3} $ an arbitrary sequence of Dold's coefficients.
\begin{corollary}\label{Cor-boundR^m}
        For any sequences $(a_{n})_{n=1}^{\infty}$ and $(\bar{a}_{n})_{n=1}^{\infty}$ of Dold's coefficients, there exists a boundary-preserving homeomorphism $f$ of $\mathbb{R}^{m}_{+}$ for $m>3$ such that: 
    \begin{align}
        \ind(f^{n},0)=\sum_{k=1}^{\infty} a_{k}\reg_{k}(n), \ \ \ \ \ind(\bar{f}^{n},0)=\sum_{k=1}^{\infty} \bar{a}_{k}\reg_{k}(n).
    \end{align}
\end{corollary}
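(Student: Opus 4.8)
The plan is to reduce the higher-dimensional statement to the three-dimensional construction already carried out for Theorem \ref{thm1}, exploiting the fact that in $\mathbb{R}^m_+$ with $m>3$ there is an extra $(m-3)$ worth of dimensions in which one may absorb a sink-like factor without disturbing the indices produced on lower-dimensional strata. The key observation is that $\partial\mathbb{R}^4_+ \approx \mathbb{R}^3$, so the map $\bar f$ on the boundary of $\mathbb{R}^4_+$ is itself a self-homeomorphism of $\mathbb{R}^3$, and we want $\bar f$ to realize the sequence $(\bar a_n)_n$ while the ambient map $f$ on $\mathbb{R}^4_+$ realizes $(a_n)_n$.

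First I would treat the case $m=4$ in detail. I would write $\mathbb{R}^4_+ = \mathbb{R}^3 \times [0,\infty)$ with boundary $\partial\mathbb{R}^4_+ = \mathbb{R}^3 \times \{0\}$, and I would now \emph{reuse the construction of Section \ref{constr} one dimension up}: that is, replace the role played there by the planar canonical flows $H_p$ on the conical surfaces $\mathcal{C}_k \subset \mathbb{R}^3_+$ with the three-dimensional orientation-preserving homeomorphisms of $\mathbb{R}^3$ realizing arbitrary Dold's coefficients (these exist by the results of \cite{PoSa}, \cite{PoSa3} quoted in the introduction, or equally well by an inductive application of Theorem \ref{thm1} itself with the roles of $f$ and $\bar f$ adjusted). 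Concretely, on the boundary copy $\partial\mathbb{R}^4_+ \approx \mathbb{R}^3$ I put a homeomorphism $\bar f$ whose index sequence is $\sum_k \bar a_k\reg_k(n)$; then I extend it into the interior by foliating $\mathbb{R}^4_+$ by conical hypersurfaces $\mathcal{C}_k$ accumulating on a distinguished hypersurface $\mathcal{B}$, placing on the $k$-th cone a three-dimensional homeomorphism realizing the partial Dold data, interpolating continuously between consecutive cones exactly as $f_3$ does in \eqref{map}, and composing with the analogues of $f_1$, $f_2$ (a $\theta$-dependent sink and a shift of intermediate cones onto main cones) together with the analogue of the rotation-interpolation map $\bar O$ supplied by Lemma \ref{irrational}. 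Then the same Mayer–Vietoris bookkeeping of Lemma \ref{induction} and Lemma \ref{Retr-Lem}, together with the fact that the outer region $\mathcal H$ contributes only $\reg_1(n)$, yields $\ind(f^n,0) = \sum_k a_k\reg_k(n)$ and $\ind(\bar f^n,0) = \sum_k \bar a_k\reg_k(n)$.

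For $m>4$ I would simply cross with a contracting factor: given a boundary-preserving homeomorphism $g$ of $\mathbb{R}^4_+$ realizing the prescribed pair of sequences, set $f = g \times (\tfrac12\,\id_{\mathbb{R}^{m-4}})$ on $\mathbb{R}^4_+ \times \mathbb{R}^{m-4} = \mathbb{R}^m_+$; the multiplicativity of the fixed point index under products (the added factor has index $1$ at its unique fixed point) shows $\ind(f^n,0)$ and $\ind(\bar f^n,0)$ are unchanged, while $f$ is still boundary-preserving since the contraction fixes the $[0,\infty)$-coordinate. The main obstacle I anticipate is purely one of bookkeeping rather than of idea: one must check that transplanting the construction of \eqref{map} from $\mathbb{R}^3_+$ to $\mathbb{R}^m_+$ preserves the hypothesis that $0$ remains an isolated fixed point of every iterate — this requires that the three-dimensional building blocks placed on the cones have $0$ as their only periodic point, which is exactly the content of the realization results in the non-accumulating setting — and that all the relevant sets ($\mathcal{G}_l$, $\mathcal{H}$, their pairwise intersections) are ENRs so that Theorem \ref{May-Viet} applies verbatim. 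Since these points were already handled in the proof of Theorem \ref{thm1} and transfer mutatis mutandis, the corollary follows; this is why the text can legitimately say "there is a lot of room for the analogous constructions."
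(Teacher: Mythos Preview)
Your proposal is correct and follows essentially the same approach as the paper, which disposes of the corollary in a single sentence (``there is a lot of room for the analogous constructions, we may repeat it, realizing in $\partial\mathbb{R}^4_+\approx\mathbb{R}^3$ an arbitrary sequence of Dold's coefficients''). Your write-up is simply a faithful unpacking of that remark: redo the conical-surface construction of Section~\ref{constr} one dimension up, using as building blocks on the cones the three-dimensional orientation-preserving realizations available from \cite{BaBo,PoSa,PoSa3}, and then pass to $m>4$ by taking a product with a contraction --- exactly the paper's intent.
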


\section{Orientation-reversing homeomorphisms \label{sec4}}
We start this section with some basic definitions and known results. We assume that $f$ is a self-homeomorphism of $\mathbb{R}^{m}$ and $p$ an isolated fixed point of $f$.
 \begin{definition} 
\label{accumulated} We say that $p$ is a non-accumulated fixed point of $f$ if there exists a neighborhood $U$ of a point $p$ such that $f^{n}(U)\cap \Per(f)=\lbrace p\rbrace$ for each $n$.
 \end{definition}
\begin{definition}\label{inv}
 We say that the set $\lbrace p\rbrace$ is an isolated invariant set with respect to $f$ if there is a neighborhood $U$ of a point $p$ such that: 
 \begin{align}\label{invariant}
        \bigcap_{k\in \mathbb{Z}}f^{k}(U)=\lbrace p\rbrace.
    \end{align}
\end{definition}
The condition (\ref{invariant}) plays an important role, as it makes it possible to apply the Conley index
 theory to study the dynamics near $p$.
 
Now we remind the result Szymczak et al. \cite{Szymczak} that we will use in the further part of the paper.
 Let $(X,d)$ be a locally compact metric space and $f\colon X\to X$ a continuous map. \begin{definition}
We define a sequence $\sigma_{x} \colon \mathbb{Z}_{-}\to X$ as a left solution of $f$ through $x$ if $\sigma(0)=x$ and $f(\sigma(n-1))=\sigma(n)$ for each $n\in \mathbb{Z}_{-}$.
 \end{definition}
 \begin{definition}
    We define the stable and unstable sets of the compact invariant set $S$ as follows: \begin{align}
        \begin{split}
            W^{s}(S,f)&=\lbrace x\in X \mathbin{:} \lim_{n\to \infty}d(f^{n}(x),S)=0\rbrace,\\
            W^{u}(S,f)&= \lbrace x\in X \mathbin{:} \exists{\sigma_{x}} \ \hbox{such that}\ \lim_{n\to \infty} d(\sigma(-n),S)=0\rbrace.
        \end{split}
    \end{align}
 \end{definition}
 \begin{theorem}\label{szymczakthm}{\normalfont(cf.~ \cite{Szymczak})}. Suppose that $f\colon \mathbb{R}^{m}_{+}\to \mathbb{R}^{m}_{+}$ is a continuous map and $S\subset \partial\mathbb{R}^{m}_{+}$ is an isolated invariant set with respect to $f$. Then \begin{align}
     \ind(f,S)=\begin{cases}
         \ind(\restr{f}{\partial\mathbb{R}^{m}_{+}},S) &\hbox{if} \ \ W^{u}(S,f)\subset \partial\mathbb{R}^{m}_{+},\\
         0  &\hbox{if} \ \ W^{s}(S,f)\subset \partial\mathbb{R}^{m}_{+}.
     \end{cases}
 \end{align}
 \end{theorem}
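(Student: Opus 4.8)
The plan is to reduce the half‑space computation to a computation on all of $\mathbb{R}^{m}$ by a reflection (doubling) trick, and then to read off the answer from the normal behaviour of $S$ using Conley‑index machinery. First I would fix an isolating neighbourhood $N\subset\mathbb{R}^{m}_{+}$ of $S$, small enough to lie inside a collar of $\partial\mathbb{R}^{m}_{+}$ and so that $N\cap\partial\mathbb{R}^{m}_{+}$ isolates $S$ for $\bar f$; then $\ind(f,S):=\ind(f,N)$ and every fixed point of $f$ in $N$ lies in $S\subset\partial\mathbb{R}^{m}_{+}$. Let $\rho$ be the reflection of $\mathbb{R}^{m}$ fixing $\partial\mathbb{R}^{m}_{+}$ pointwise, write $\mathbb{R}^{m}=\mathbb{R}^{m}_{+}\cup_{\partial}\mathbb{R}^{m}_{-}$ with $\mathbb{R}^{m}_{-}=\rho(\mathbb{R}^{m}_{+})$, and define $\hat f\colon\mathbb{R}^{m}\to\mathbb{R}^{m}$ by $\hat f=f$ on $\mathbb{R}^{m}_{+}$ and $\hat f=\rho\circ f\circ\rho$ on $\mathbb{R}^{m}_{-}$. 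The two pieces agree with $\bar f$ on $\partial\mathbb{R}^{m}_{+}$, so $\hat f$ is continuous by the pasting lemma, it preserves each closed half‑space, and $N\cup\rho(N)$ isolates $\operatorname{Inv}_{\hat f}(N\cup\rho(N))=S$ (using $\rho(S)=S$). Applying the Mayer–Vietoris formula (Theorem~\ref{May-Viet}) to $\mathbb{R}^{m}=\mathbb{R}^{m}_{+}\cup\mathbb{R}^{m}_{-}$, together with invariance of the fixed point index under the homeomorphism $\rho$, I get
\[
\ind(\hat f,S)=\ind(f,N)+\ind(\rho f\rho,\rho(N))-\ind(\bar f,N\cap\partial\mathbb{R}^{m}_{+})=2\,\ind(f,S)-\ind(\bar f,S),
\]
so it suffices to evaluate $\ind(\hat f,S)$ in each of the two cases.

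In the case $W^{u}(S,f)\subset\partial\mathbb{R}^{m}_{+}$, I would first check that $W^{u}(S,\hat f)\subset\partial\mathbb{R}^{m}_{+}$: since $\hat f^{-1}(\operatorname{int}\mathbb{R}^{m}_{\pm})\subset\operatorname{int}\mathbb{R}^{m}_{\pm}$, any left solution of $\hat f$ starting off $\partial\mathbb{R}^{m}_{+}$ stays off it, hence would restrict to a left solution of $f$ (resp.\ of $\rho f\rho$) converging to $S$, contradicting the hypothesis. Thus, for $\hat f$, the isolated invariant set $S$ has its unstable set inside the invariant codimension‑one submanifold $\partial\mathbb{R}^{m}_{+}$, i.e.\ $S$ is normally attracting onto $\partial\mathbb{R}^{m}_{+}$; the Conley‑index localisation principle for normally hyperbolic sets then gives $h(S,\hat f)\cong h(S,\bar f)$ (the attracting normal factor being homotopy‑trivial), and comparing the associated Lefschetz numbers yields $\ind(\hat f,S)=\ind(\bar f,S)$, so the displayed identity gives $\ind(f,S)=\ind(\bar f,S)$. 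In the case $W^{s}(S,f)\subset\partial\mathbb{R}^{m}_{+}$, the analogous point‑set argument shows $\operatorname{Inv}^{+}_{\hat f}(N\cup\rho(N))\subset\partial\mathbb{R}^{m}_{+}$: a forward $\hat f$‑orbit staying in $N\cup\rho(N)$ and meeting $\operatorname{int}\mathbb{R}^{m}_{\pm}$ would, after truncation, be a forward $f$‑orbit in $N$ whose $\omega$‑limit set is an invariant subset of $N$, hence contained in $S$, forcing the initial point into $W^{s}(S,f)\subset\partial\mathbb{R}^{m}_{+}$. Hence the single normal direction transverse to $\partial\mathbb{R}^{m}_{+}$ is repelling, so $S$ is, for $\hat f$, normally a source in that one direction; the product formula for the Conley index gives $\ind(\hat f,S)=(-1)\,\ind(\bar f,S)$, and the displayed identity then yields $2\,\ind(f,S)=\ind(\bar f,S)-\ind(\bar f,S)=0$, i.e.\ $\ind(f,S)=0$.

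The hard part will be the two invocations of Conley‑index machinery: for a merely continuous (not smooth) map one must actually construct the index pairs that exhibit the normal‑attracting, resp.\ normal‑repelling, splitting. Concretely, I would work in collar coordinates $(x,t)\in N_{0}\times[0,\delta]$, use the hypothesis together with compactness to obtain uniform control of the $t$‑component of orbits near $S$ (uniform entry into $\{t=0\}$ in the first case, uniform escape through $\{t=\delta\}$ in the second), and from this build an index pair of product type $\bigl(P_{0}\times[0,\delta],\,Q_{0}\times[0,\delta]\bigr)$ in the first case and $\bigl(P_{0}\times[0,\delta],\,(Q_{0}\times[0,\delta])\cup(P_{0}\times\{\delta\})\bigr)$ in the second, with $(P_{0},Q_{0})$ an index pair for $\bar f$. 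Since $[0,\delta]$ is contractible, in the first case the quotient deformation retracts onto $P_{0}/Q_{0}$ compatibly with the index maps, while in the second it is pointed‑contractible, and the Lefschetz/Leray trace then reads off $\ind$. Making this ``uniform transverse control'' rigorous for continuous maps is the technical heart of the argument and is precisely what is carried out in \cite{Szymczak}; the remaining ingredients — the pasting lemma, Theorem~\ref{May-Viet}, the homotopy and commutativity properties of the fixed point index, its invariance under $\rho$, and the identification of $\ind$ with the Lefschetz number of the Conley index of $S$ — are standard and valid here because $\mathbb{R}^{m}_{+}$ and $\mathbb{R}^{m}$ are locally compact ENRs.
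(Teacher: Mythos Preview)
The paper does not prove this theorem at all: Theorem~\ref{szymczakthm} is quoted from \cite{Szymczak} (Szymczak, W\'ojcik, Zgliczy\'nski) and is used as a black box, so there is no ``paper's own proof'' to compare your proposal against.

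As for the proposal itself, the doubling-plus-Mayer--Vietoris reduction is correct and elegant: with $\hat f$ the $\rho$-equivariant extension you do get $\ind(\hat f,S)=2\,\ind(f,S)-\ind(\bar f,S)$, and the two target identities $\ind(\hat f,S)=\pm\ind(\bar f,S)$ would finish the job. The difficulty is that these two identities are, up to cosmetics, the very statement you are trying to prove, only for a map on $\mathbb{R}^m$ with an invariant hyperplane instead of a map on $\mathbb{R}^m_+$ with invariant boundary; the result in \cite{Szymczak} is formulated for an arbitrary invariant subspace and covers both situations at once. So the reflection step changes the setting but does not bypass the analytic core, and indeed you say so yourself when you defer the construction of product-type index pairs to \cite{Szymczak}. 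In that sense your write-up is a reformulation rather than an independent proof: once you invoke \cite{Szymczak} for the doubled map, you might as well invoke it directly for $f$ on $\mathbb{R}^m_+$.

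Two smaller points. First, Theorem~\ref{May-Viet} as stated in the paper is for an isolated fixed \emph{point}; you are using the obvious extension to an isolated set of fixed points, which is true but should be flagged. Second, your proof silently assumes $f(\partial\mathbb{R}^m_+)\subset\partial\mathbb{R}^m_+$ (needed both for $\bar f$ to make sense and for the pasting lemma to apply to $\hat f$); this is implicit in the theorem but worth stating.
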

 
The theorem that inspired our considerations is related to orientation-reversing homeomorphisms in dimension $3$ with a fixed point being isolated as an invariant set. 
\begin{theorem}\label{revers}{\normalfont(cf.~ \cite{Corbato}, Theorem C)}. For any sequence $(a_{n})_{n=1}^{\infty}$ of Dold's coefficients, there is an orientation-reversing homeomorphism $f$ of $\mathbb{R}^{3}$ such that $\lbrace 0\rbrace$ is an isolated invariant set and \begin{align}
    \ind(f^{n},0)=\sum_{k=1}^{\infty}a_{k}\reg_{k}(n)
\end{align}
if and only if \begin{enumerate}
    \item there are finitely many non-zero $a_{k}$,
    \item $a_{1}\leq 1$ and $a_{k}\leq 0$ for all odd $k>1$.
\end{enumerate}
\end{theorem}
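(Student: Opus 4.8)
This is Theorem~C of \cite{Corbato}; here is how I would attack it. Write $\ind(f^{n},0)=\sum_{k\ge1}a_{k}\reg_{k}(n)$ and treat the two implications of the equivalence separately.

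\emph{Realization ($\Leftarrow$).} Given a Dold sequence with only finitely many nonzero coefficients and with $a_{1}\le1$ and $a_{k}\le0$ for all odd $k>1$, I would construct $f$ inside a small closed ball $B$ about $0$ by a conical decomposition in the style of Section \ref{sec3}: partition $B$ into finitely many wedges meeting along $2$-dimensional conical surfaces through $0$ and, in each wedge, place a radial contraction of $B$ composed with a model on the cross-section, chosen from the hyperbolic-sector flows $h_{p}$ of Subsection \ref{canonical} with $p\le1$, the composites $O\circ h_{p}$ with $p=1+a_{k}k\le1$, and products of a one-dimensional orientation-reversing sink or source with such planar models. Making the models agree on the shared surfaces and then computing $\ind(f^{n},0)$ by iterating the Mayer--Vietoris formula (Theorem \ref{May-Viet}) exactly as in Lemmas \ref{Retr-Lem}--\ref{induction} --- the surplus $\reg_{1}$'s from the interfaces telescoping to a single $\reg_{1}$, as there --- should give $\ind(f^{n},0)=\sum_{k}a_{k}\reg_{k}(n)$. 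The hypotheses (1) and (2) are exactly the feasibility conditions for this. The radial contraction in every wedge forces $\{0\}$ to be the only bi-infinite bounded orbit, so $\{0\}$ is an isolated invariant set, and this persists because only finitely many wedges are used (infinitely many nontrivial blocks accumulating at $0$ would produce periodic points arbitrarily close to $0$); this is why (1) is needed. The ``constant'' block must have constant index $\le1$, and $h_{p}$ has an isolated invariant $\{0\}$ only for $p\le1$ (for $p>1$ the elliptic sectors carry bi-bounded orbits), which forces $a_{1}\le1$; an odd-period contribution requires $O\circ h_{p}$ with $p=1+a_{k}k\le1$, which forces $a_{k}\le0$; for even $k$ there is no obstruction, since one may flip the sign of a block's contribution by using a one-dimensional orientation-reversing \emph{source} in place of a sink, absorbing the resulting extra $\reg_{2}$-term by an additional wedge. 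Carrying this out realizes every sequence permitted by (1)--(2), by an orientation-reversing homeomorphism of $\mathbb{R}^{3}$ with $\{0\}$ an isolated invariant set.

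\emph{Necessity ($\Rightarrow$).} Here I would use Conley index theory. Since $\{0\}$ is an isolated invariant set, $f$ has an index pair $(N,L)$ near $0$; the discrete Conley index --- the pointed homotopy type of $N/L$ together with the shift-equivalence class of the induced self-map --- is defined, and so is its homological version $CH_{*}(\{0\},f)$ with the automorphism induced on the Leray (generalized-kernel) reduction $\overline{CH}_{*}$, a finite family of finitely generated free abelian groups; the generalized Lefschetz--Dold trace formula then gives
\[
\ind(f^{n},0)=\sum_{i\ge0}(-1)^{i}\,\mathrm{tr}\!\left((f_{*})^{n}\big|_{\overline{CH}_{i}}\right).
\]
The core of the argument is a classification --- valid because $f$ is a \emph{homeomorphism} of $\mathbb{R}^{3}$, and false for general continuous maps or in higher dimensions --- of the Conley indices that can occur at such a fixed point: analysing the local unstable set $W^{u}_{\mathrm{loc}}(0)$, a compact set whose \v{C}ech cohomology controls $CH_{*}$, using the two-dimensional homeomorphism theory behind Theorem \ref{forms} and its orientation-reversing analogue for the action of $f$ on $W^{u}_{\mathrm{loc}}$, and the Szymczak-type splitting of Theorem \ref{szymczakthm} for the part of the index lying in the boundary of a thickening of $W^{u}_{\mathrm{loc}}$, one should show that $\overline{CH}_{i}$ is nonzero only for $i\in\{0,1,2,3\}$, has rank $\le1$ in degrees $0$ and $3$, and carries on each nonzero summand an induced map factoring through the homology action of a homeomorphism of $S^{0}$, $S^{1}$, or $S^{2}$, or through a finite-order rotation. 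Each $\mathrm{tr}((f_{*})^{n}|_{\overline{CH}_{i}})$ is then periodic in $n$, and a finite sum of such is periodic --- this is (1). Orientation reversal, which forces $f$ to act by $-1$ on $H_{3}(\mathbb{R}^{3},\mathbb{R}^{3}\setminus\{0\})\cong\mathbb{Z}$, when traced through the isolating block shows that the $+1$ coming from the degree-$0$ part can contribute only to $a_{1}$, while every other contribution to an odd-period coefficient carries a net minus sign; this gives $a_{1}\le1$ and $a_{k}\le0$ for all odd $k>1$.

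The step I expect to be the main obstacle is this classification in the necessity direction: one must exclude the non-unipotent Conley-index behaviour that genuinely occurs for continuous maps --- and that underlies the unrestricted realizations of \cite{BaBo,PoSa} --- and one must follow the orientation through a possibly wild local unstable set. This forces the use of the specifically three-dimensional topology of homeomorphisms --- Brouwer and prime-end arguments on the relevant two-dimensional slices and the behaviour of homeomorphisms of $S^{1}$ and $S^{2}$ --- rather than algebraic topology alone, and it is the technical heart of \cite{Corbato}. The realization direction, by contrast, should amount to a finite Mayer--Vietoris computation over the machinery already set up in Section \ref{sec3}, once the admissible orientation-reversing building blocks have been identified.
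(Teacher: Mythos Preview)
The paper does not prove this theorem. Theorem~\ref{revers} is stated with the attribution ``(cf.~\cite{Corbato}, Theorem~C)'' and is quoted solely as background and motivation for the paper's own Theorem~\ref{thm3}; no argument for it appears anywhere in the text. There is therefore no ``paper's own proof'' against which to compare your attempt.

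As a separate remark on the strategy you outline: its overall shape is broadly consistent with what \cite{Corbato} actually does. The necessity direction there is indeed carried by discrete Conley index theory and a Lefschetz--Dold trace formula, and the technical core is precisely a structural analysis of the homological Conley index that exploits the fact that $f$ is a homeomorphism of $\mathbb{R}^{3}$ (acyclicity of the relevant invariant continua and control of the induced maps). Your realization sketch --- adapting the conical machinery of Section~\ref{sec3} while enforcing that $\{0\}$ be isolated as an invariant set, and reading off conditions (1)--(2) as the feasibility constraints --- is plausible in outline, though \cite{Corbato} builds its models more directly rather than via the constructions of the present paper. Since your submission is explicitly labelled a strategy rather than a proof, and since the paper under review contains no argument to compare it to, a finer assessment is not possible here.
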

\begin{remark}
The following question arises concerning Theorem \ref{revers}$\mathbin{.}$ Rejecting the assumption that $\lbrace 0\rbrace$ is necessarily an isolated invariant set of $f$, what are possible forms of fixed point indices of iterates? We give the complete answer to this question in Theorem \ref{thm3} below.
\end{remark}

\begin{theorem}\label{thm3}
    For any sequence $( a_{n})_{n=1}^{\infty}$ of Dold's coefficients, there is an orientation-reversing homeomorphism $f$ of $\mathbb{R}^{m}$ ($m\geq 3$) such that: \begin{align} \label{53}
        \ind(f^{n},0)=\sum_{k=1}^{\infty}a_{k}\reg_{k}(n).
    \end{align}
\end{theorem}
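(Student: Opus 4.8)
The plan is to reduce to $m=3$ and then to glue two boundary-preserving homeomorphisms of $\mathbb{R}^3_+$ along their common boundary. For $m>3$, given an orientation-reversing homeomorphism $\phi$ of $\mathbb{R}^3$ with $0$ isolated for each iterate and $\ind(\phi^{n},0)=\sum_{k}a_{k}\reg_{k}(n)$, I would take $f=\phi\times c$ on $\mathbb{R}^{m}=\mathbb{R}^{3}\times\mathbb{R}^{m-3}$ with $c=\tfrac12\id_{\mathbb{R}^{m-3}}$. Then $f$ is orientation-reversing (its degree is $(-1)(+1)=-1$), $\Fix(f^{n})=\Fix(\phi^{n})\times\{0\}$ so $0$ remains isolated for each iterate, and by multiplicativity of the fixed point index $\ind(f^{n},0)=\ind(\phi^{n},0)\cdot\ind(c^{n},0)=\sum_{k}a_{k}\reg_{k}(n)$, since $\ind(c^{n},0)=\operatorname{sign}\det(I-2^{-n}I)=1$. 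So it suffices to treat $m=3$.

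For $m=3$ I would write $\mathbb{R}^{3}=\mathbb{R}^{3}_{+}\cup\mathbb{R}^{3}_{-}$ glued along $P=\partial\mathbb{R}^{3}_{+}\cong\mathbb{R}^{2}$, with the fixed point $0\in P$, and build $f$ preserving each half-space. Such an $f$ is orientation-reversing exactly when $\bar f:=f|_{P}$ reverses orientation of $\mathbb{R}^{2}$: a boundary-preserving homeomorphism of a half-space carries the inward normal to the inward normal, hence it preserves the orientation of the half-space iff it preserves the orientation of the boundary, and the local degree of $f$ at an interior point of $\mathbb{R}^{3}_{+}$ equals that of $f|_{\mathbb{R}^{3}_{+}}$. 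Accordingly I would fix an orientation-reversing homeomorphism $\bar f$ of $\mathbb{R}^{2}$ with $\Fix(\bar f^{\,n})=\{0\}$ for all $n$ and $\ind(\bar f^{\,n},0)=\reg_{1}(n)$ — for instance a conjugate of a linear reflection composed with a $\tfrac12$-contraction, for which $\bar f^{\,2j}=4^{-j}\id$ and $\bar f^{\,2j+1}$ is a scaled reflection, so each iterate fixes only $0$ and has index $1$; this is consistent with the planar orientation-reversing theory (cf.\ \cite{Bonino},\cite{PoSa4}). On $\mathbb{R}^{3}_{-}\cong\mathbb{R}^{2}\times(-\infty,0]$ I would put the skew product $h(x,s)=(\bar f(x),\tfrac12 s)$, an orientation-reversing boundary-preserving homeomorphism with $h|_{P}=\bar f$, $\Fix(h^{n})=\Per(h)=\{0\}$, and $\ind(h^{n},0)=\ind(\bar f^{\,n},0)\cdot\ind(s\mapsto 2^{-n}s,0)=\reg_{1}(n)$.

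The crucial step is the construction, on $\mathbb{R}^{3}_{+}$, of an orientation-reversing boundary-preserving homeomorphism $g$ with $g|_{P}=\bar f$, with $\Fix(g^{n})=\Per(g)=\{0\}$, and with $\ind(g^{n},0)=\sum_{k}a_{k}\reg_{k}(n)$. Granting this, set $f:=g\cup_{P}h$. This is a homeomorphism of $\mathbb{R}^{3}$, orientation-reversing because both halves are, and $0$ is isolated in $\Fix(f^{n})$ for every $n$ (fixed points of $f^{n}$ interior to a half-space are those of $g^{n}$ or $h^{n}$, and those on $P$ are those of $\bar f^{\,n}$). The Mayer–Vietoris formula (Theorem~\ref{May-Viet}) with $A=\overline{\mathbb{R}^{3}_{+}}$, $B=\overline{\mathbb{R}^{3}_{-}}$, $A\cap B=P$ then gives
\[
\ind(f^{n},0)=\ind(g^{n},0)+\ind(h^{n},0)-\ind(\bar f^{\,n},0)=\sum_{k}a_{k}\reg_{k}(n)+\reg_{1}(n)-\reg_{1}(n)=\sum_{k}a_{k}\reg_{k}(n),
\]
as required.

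To build $g$ I would revisit the construction proving Theorem~\ref{thm1} — the conical surfaces $\mathcal{C}_{k}$, the canonical planar flows $H_{p_{k}}$ of (\ref{system}), and the maps $f_{1},f_{2},f_{3},\bar O$ — and make it orientation-reversing. The obstacle is that $f_{1},f_{2},f_{3},\bar O$ are all orientation-preserving and one cannot interpolate between orientation-preserving and orientation-reversing behaviour through homeomorphisms (since $\mathrm{Homeo}(\mathbb{R}^{2})$ is disconnected), so the whole construction has to be twisted at once rather than only near the boundary cone $\mathcal{C}_{1}$. The point that makes this possible is that the systems (\ref{system}) are invariant under $\varphi\mapsto-\varphi$, so the associated reflection of $\mathbb{R}^{3}_{+}$ commutes with $f_{1},f_{2},f_{3}$ (and the reflections $O\circ(\varphi\mapsto-\varphi)$ still commute with the flows $H_{p}$): composing the Section~\ref{sec3} construction with such a reflection should yield an orientation-reversing boundary-preserving homeomorphism whose second iterate — and, more to the point, whose conical-slice dynamics — is still of the form analysed in Section~\ref{sec3}, so that rerunning the computations of Lemmas~\ref{Retr-Lem}--\ref{induction} delivers $\ind(g^{n},0)=\sum_{k}a_{k}\reg_{k}(n)$ and $\ind(\bar g^{\,n},0)=\reg_{1}(n)$. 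Checking that the index bookkeeping is undisturbed by the twist — especially for the odd iterates, where the reflection is genuinely felt — is the main technical difficulty I would expect.
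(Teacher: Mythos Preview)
Your reduction to $m=3$ via the product with a contraction is correct and matches the paper. The Mayer--Vietoris gluing along $P=\partial\mathbb{R}^{3}_{+}$ is also sound in principle. The gap is entirely in the crucial step: the claim that composing the Section~\ref{sec3} map with the reflection $R\colon(r,\varphi,\theta)\mapsto(r,-\varphi,\theta)$ produces an orientation-reversing $g$ on $\mathbb{R}^{3}_{+}$ with the \emph{same} index sequence. It does not. On a conical slice $\mathcal{C}_{k+3}$ the Section~\ref{sec3} map is $O_{u_k}\circ h_{p_{k+3}}$, where $O_{u_k}$ is rotation by $2\pi v_k/u_k$ and $p_{k+3}=1+a_{u_k}u_k$; its $n$-th iterate has index $1$ unless $u_k\mid n$, in which case the index is $p_{k+3}$. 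But your proposed $g|_{\mathcal{C}_{k+3}}=R\circ O_{u_k}\circ h_{p_{k+3}}$ satisfies $R\circ O_{u_k}\circ R=O_{u_k}^{-1}$, so $(R\circ O_{u_k})^{2}=\id$ and hence $g^{2}|_{\mathcal{C}_{k+3}}=h_{p_{k+3}}^{2}$, giving $\ind(g^{2n}|_{\mathcal{C}_{k+3}},0)=p_{k+3}=1+a_{u_k}u_k$ for \emph{every} $n$. The reflection kills the rotation on even iterates, so the periodicity governed by $u_k$ is lost: for odd $u_k>1$ you get index $1+a_{u_k}u_k$ at $n=2$ instead of $1$. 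The summation over cones via Lemmas~\ref{Retr-Lem}--\ref{induction} therefore cannot reproduce an arbitrary $\sum a_k\reg_k$; the ``bookkeeping for odd iterates'' you flag as a difficulty is in fact a failure for the even iterates as well.

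The paper avoids this by an entirely different organisation. It does not glue half-spaces; instead it builds a single orientation-preserving $g$ on all of $\mathbb{R}^{3}$ with carefully designed $s$-symmetry, then sets $f=s\circ g$. Odd and even Dold coefficients are handled by separate mechanisms. For the even $a_{2k}$ (and corrections), two copies $f_{\pm}$ of the Section~\ref{sec3} map are placed in opposite solid cones $\mathcal{A}_{\pm}$ that the reflection $s$ \emph{swaps}; thus odd iterates of $s\circ\Phi$ see no fixed points other than $0$ and have index $1$, while even iterates reduce to a Section~\ref{sec3} computation intact (the rotation survives because $s$ now acts by exchanging cones, not by conjugating the rotation to its inverse). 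For the odd $a_{2k-1}$ the paper abandons the rotation scheme altogether and uses a new device: bouquets of solid cones $\breve{\mathcal{S}}_i$ carrying hyperbolic or elliptic radial dynamics, permuted cyclically by a disk-twist $u_k$, placed symmetrically about $z=0$; the indices of $s\circ\breve{g}_{h}$ and $s\circ\breve{g}_{e}$ are computed directly (Lemmas~\ref{ind-values}, \ref{sym}, \ref{index_0}) and fed into a Mayer--Vietoris sum over the bouquet. The interaction between elliptic cones and $s$ produces $\reg_{2k-1}-\reg_{2(2k-1)}$ terms whose $\reg_{2(2k-1)}$ excess is then cancelled by adjusting the $p_k$ in the $\mathcal{A}_{\pm}$ construction. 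None of this is obtainable by twisting the Section~\ref{sec3} map with a single global reflection.
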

\section{Constructions needed in the proof of Theorem  (\ref{thm3}) \label{sec5}}
In this section, we construct a map $f$, an orientation-reversing self-homeomorphism of $\mathbb{R}^{3}$, which realizes any sequence of Dold's coefficients as a sequence of fixed point indices of iterates. 
\subsection{Model of hyperbolic and elliptic dynamics}
Let us consider a planar sector $\mathcal{S}=\lbrace (r,\varphi): 0\leq \alpha-2\beta\leq \varphi \leq \alpha+2\beta\leq 2\pi\rbrace$. We split it into the three following sectors (Figure \ref{model}): \begin{align}\begin{split}
\mathcal{I}_{R}&=\lbrace (r,\varphi)\mathbin{:} \alpha-2\beta \leq \varphi\leq \alpha-\beta\rbrace\ \hbox{(right)},\\
\mathcal{I}_{C}&=\lbrace (r,\varphi)\mathbin{:} \alpha-\beta\leq \varphi\leq \alpha+\beta\rbrace \ \hbox{(center)},\\ 
\mathcal{I}_{L}&=\lbrace (r,\varphi)\mathbin{:} \alpha+2\beta \leq \varphi\leq \alpha+\beta\rbrace \ \hbox{(left)}. \end{split}
\end{align}
\begin{figure}[!h] 
\def\svgwidth{0.75\columnwidth}
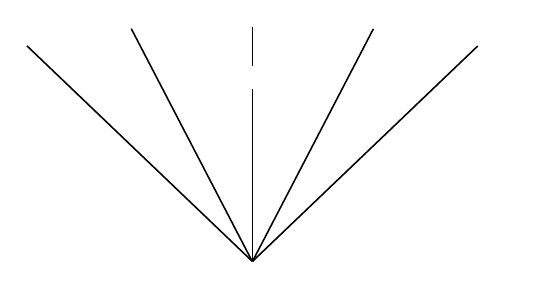
\caption{The splitting of the sector $\mathcal{S}$.}
\label{model}
\end{figure}
Now, we define the flow $G\colon \mathcal{S}\times \mathbb{R}\to \mathcal{S}$ by the following systems of equations:
\begin{align}\label{conical}\begin{split}
\mathcal{I}_{R}&: \ \begin{cases}
    \dot{r}(t)=-rs-r(\gamma(r)+\lambda(r))(1-s),\\
    \varphi_{s}=\alpha-\beta-s\beta,
\end{cases} \\
\mathcal{I}_{C}&: \ \begin{cases}
    \dot{r}(t)=-\gamma(r) r\cos(\pm \pi(\frac{\varphi}{\beta}+\frac{\beta-\alpha}{\beta}))-r\lambda(r),\\
    \dot{\varphi}(t)=-\gamma(r) \sin(\pm \pi(\frac{\varphi}{\beta}+\frac{\beta-\alpha}{\beta})),\end{cases} \ \ s\in[0,1] \\
    \mathcal{I}_{L}&: \ \begin{cases}
        \dot{r}(t)=-rs-r(\gamma(r)+\lambda(r))(1-s),\\
    \varphi_{s}=\alpha+\beta+s\beta,
\end{cases}
\end{split}
\end{align}
where $\lambda,\gamma\colon [0,\infty)\to [0,1]$ are the  maps whose graphs are shown in Figure \ref{twomaps}. Note that the choice between signs ($+$) or ($-$) in the arguments of trigonometric functions given in (\ref{conical}) denotes the choice of generating elliptic or  hyperbolic dynamics (each containing two sectors of them) in the phase space $\mathcal{I}_{C}$, respectively.
In the above sectors, the equations (\ref{conical}) describe the  continuous  change  between hyperbolic or elliptic dynamics and the sink. Moreover, increasing $r$ we get that the flow ultimately becomes the identity at $r_0$ and a sink at $2r_0$.
\begin{figure}[h!] 
\def\svgwidth{0.45\columnwidth} 
\begingroup%
  \makeatletter%
  \providecommand\color[2][]{%
    \errmessage{(Inkscape) Color is used for the text in Inkscape, but the package 'color.sty' is not loaded}%
    \renewcommand\color[2][]{}%
  }%
  \providecommand\transparent[1]{%
    \errmessage{(Inkscape) Transparency is used (non-zero) for the text in Inkscape, but the package 'transparent.sty' is not loaded}%
    \renewcommand\transparent[1]{}%
  }%
  \providecommand\rotatebox[2]{#2}%
  \newcommand*\fsize{\dimexpr\f@size pt\relax}%
  \newcommand*\lineheight[1]{\fontsize{\fsize}{#1\fsize}\selectfont}%
  \ifx\svgwidth\undefined%
    \setlength{\unitlength}{317.48031496bp}%
    \ifx\svgscale\undefined%
      \relax%
    \else%
      \setlength{\unitlength}{\unitlength * \real{\svgscale}}%
    \fi%
  \else%
    \setlength{\unitlength}{\svgwidth}%
  \fi%
  \global\let\svgwidth\undefined%
  \global\let\svgscale\undefined%
  \makeatother%
  \begin{picture}(1,0.84821429)%
    \lineheight{1}%
    \setlength\tabcolsep{0pt}%
    \put(0,0){\includegraphics[width=\unitlength,page=1]{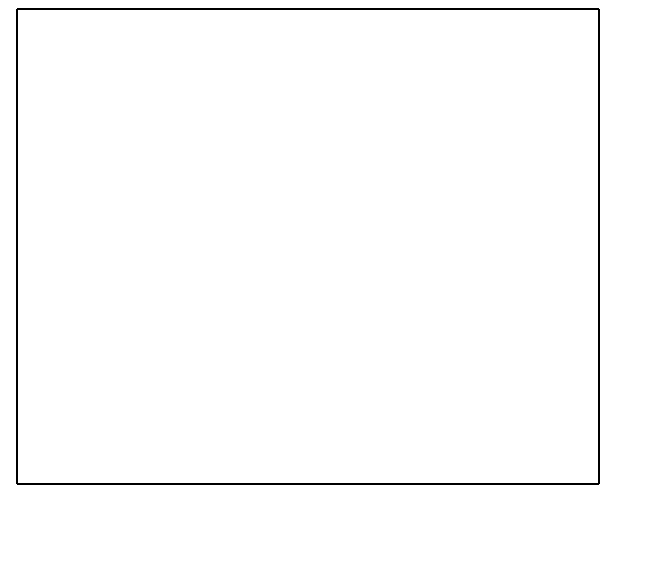}}%
    \put(0.01205357,0.00994315){\makebox(0,0)[lt]{\lineheight{1.25}\smash{\begin{tabular}[t]{l}$0$\end{tabular}}}}%
    \put(0.45933986,0.00994469){\makebox(0,0)[lt]{\lineheight{1.25}\smash{\begin{tabular}[t]{l}$r_{0}$\end{tabular}}}}%
    \put(0.03165746,0.71914927){\makebox(0,0)[lt]{\lineheight{1.25}\smash{\begin{tabular}[t]{l}$1$\end{tabular}}}}%
    \put(0,0){\includegraphics[width=\unitlength,page=2]{0gammamap.pdf}}%
    \put(0.43102843,0.45980161){\makebox(0,0)[lt]{\lineheight{1.25}\smash{\begin{tabular}[t]{l}$\gamma$\end{tabular}}}}%
    \put(0,0){\includegraphics[width=\unitlength,page=3]{0gammamap.pdf}}%
  \end{picture}%
\endgroup%

    \centering
    \subfloat{{\def\svgwidth{0.45\columnwidth}
\begingroup%
  \makeatletter%
  \providecommand\color[2][]{%
    \errmessage{(Inkscape) Color is used for the text in Inkscape, but the package 'color.sty' is not loaded}%
    \renewcommand\color[2][]{}%
  }%
  \providecommand\transparent[1]{%
    \errmessage{(Inkscape) Transparency is used (non-zero) for the text in Inkscape, but the package 'transparent.sty' is not loaded}%
    \renewcommand\transparent[1]{}%
  }%
  \providecommand\rotatebox[2]{#2}%
  \newcommand*\fsize{\dimexpr\f@size pt\relax}%
  \newcommand*\lineheight[1]{\fontsize{\fsize}{#1\fsize}\selectfont}%
  \ifx\svgwidth\undefined%
    \setlength{\unitlength}{317.48031496bp}%
    \ifx\svgscale\undefined%
      \relax%
    \else%
      \setlength{\unitlength}{\unitlength * \real{\svgscale}}%
    \fi%
  \else%
    \setlength{\unitlength}{\svgwidth}%
  \fi%
  \global\let\svgwidth\undefined%
  \global\let\svgscale\undefined%
  \makeatother%
  \begin{picture}(1,0.84821429)%
    \lineheight{1}%
    \setlength\tabcolsep{0pt}%
    \put(0,0){\includegraphics[width=\unitlength,page=1]{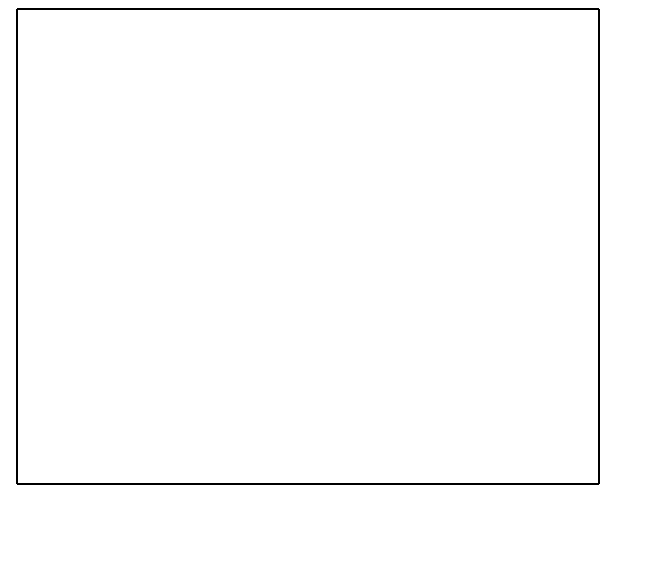}}%
    \put(0.01205357,0.00994315){\makebox(0,0)[lt]{\lineheight{1.25}\smash{\begin{tabular}[t]{l}$0$\end{tabular}}}}%
    \put(0.45933986,0.00994469){\makebox(0,0)[lt]{\lineheight{1.25}\smash{\begin{tabular}[t]{l}$r_{0}$\end{tabular}}}}%
    \put(0.68676485,0.00994469){\makebox(0,0)[lt]{\lineheight{1.25}\smash{\begin{tabular}[t]{l}$2r_{0}$\end{tabular}}}}%
    \put(0.03165746,0.71914927){\makebox(0,0)[lt]{\lineheight{1.25}\smash{\begin{tabular}[t]{l}$1$\end{tabular}}}}%
    \put(0,0){\includegraphics[width=\unitlength,page=2]{lambdamap.pdf}}%
    \put(0.43102843,0.45980161){\makebox(0,0)[lt]{\lineheight{1.25}\smash{\begin{tabular}[t]{l}$\lambda$\end{tabular}}}}%
    \put(0,0){\includegraphics[width=\unitlength,page=3]{lambdamap.pdf}}%
  \end{picture}%
\endgroup%
}}%
\caption{The graphs of maps $\lambda$ and $\gamma$ with a fixed value $r_{0}\in (0,+\infty)$.}
\label{twomaps}
\end{figure}
\begin{remark} \label{rm61}
 For each angle $\varphi\in \lbrace \alpha-2\beta,\alpha-\beta, \alpha+\beta,\alpha+2\beta\rbrace$ we have the same sink generated by the equation $\dot{r}=-r$. 
\end{remark}
Now, we transfer the dynamics of (\ref{conical}) into $\mathbb{R}^{3}$. Let $\breve{\mathcal{S}}\subset \mathbb{R}^{3}$ be a solid cone that is formed after the rotation of $\mathcal{S}$ around its central ray determined by the angle $\varphi=\alpha$. We introduce a special version of conical coordinate system $(r,\rho_{\alpha},\varphi)$ in $\breve{\mathcal{S}}$, where $r$ and $\varphi$ represent the standard polar coordinates and $\rho_{\alpha} \in [0,\pi]$ denotes the coordinate that rotates the points around a central ray of a cone. 

We define the flow $\breve{G}\colon \breve{\mathcal{S}}\times \mathbb{R}\to \breve{\mathcal{S}}$  by the following system of equations: \begin{align}\label{newflow}
\breve{G}=\begin{cases}
        G,\\
        \dot{\rho}_{\alpha}=0,
    \end{cases}
\end{align}
and let $\breve{g}_{h}$ and $\breve{g}_{e}$ denote the time-one maps of $\breve{G}$ depending on the choice of hyperbolic or elliptic dynamics, respectively (Figure \ref{dynamics}).
\begin{figure}[!h] 
\def\svgwidth{1\columnwidth}
\begingroup%
  \makeatletter%
  \providecommand\color[2][]{%
    \errmessage{(Inkscape) Color is used for the text in Inkscape, but the package 'color.sty' is not loaded}%
    \renewcommand\color[2][]{}%
  }%
  \providecommand\transparent[1]{%
    \errmessage{(Inkscape) Transparency is used (non-zero) for the text in Inkscape, but the package 'transparent.sty' is not loaded}%
    \renewcommand\transparent[1]{}%
  }%
  \providecommand\rotatebox[2]{#2}%
  \newcommand*\fsize{\dimexpr\f@size pt\relax}%
  \newcommand*\lineheight[1]{\fontsize{\fsize}{#1\fsize}\selectfont}%
  \ifx\svgwidth\undefined%
    \setlength{\unitlength}{352.5bp}%
    \ifx\svgscale\undefined%
      \relax%
    \else%
      \setlength{\unitlength}{\unitlength * \real{\svgscale}}%
    \fi%
  \else%
    \setlength{\unitlength}{\svgwidth}%
  \fi%
  \global\let\svgwidth\undefined%
  \global\let\svgscale\undefined%
  \makeatother%
  \begin{picture}(1,0.36170213)%
    \lineheight{1}%
    \setlength\tabcolsep{0pt}%
    \put(0,0){\includegraphics[width=\unitlength,page=1]{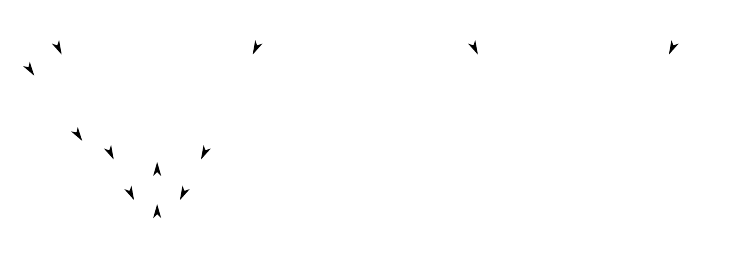}}%
    \put(0.20815629,0.00497372){\makebox(0,0)[lt]{\lineheight{1.25}\smash{\begin{tabular}[t]{l}$0$\end{tabular}}}}%
    \put(0.77490637,0.00497372){\makebox(0,0)[lt]{\lineheight{1.25}\smash{\begin{tabular}[t]{l}$0$\end{tabular}}}}%
    \put(0,0){\includegraphics[width=\unitlength,page=2]{FiniteCones.pdf}}%
    \put(0.22293492,0.27351898){\makebox(0,0)[lt]{\lineheight{1.25}\smash{\begin{tabular}[t]{l}$r=r_{0}$\end{tabular}}}}%
    \put(0.78880972,0.2735187){\makebox(0,0)[lt]{\lineheight{1.25}\smash{\begin{tabular}[t]{l}$r=r_{0}$\end{tabular}}}}%
    \put(0,0){\includegraphics[width=\unitlength,page=3]{FiniteCones.pdf}}%
    \put(0.38939297,0.20812474){\makebox(0,0)[lt]{\lineheight{1.25}\smash{\begin{tabular}[t]{l}$\breve{g}_{h}$\end{tabular}}}}%
    \put(0.95035868,0.20812518){\makebox(0,0)[lt]{\lineheight{1.25}\smash{\begin{tabular}[t]{l}$\breve{g}_{e}$\end{tabular}}}}%
  \end{picture}%
\endgroup%

\caption{The dynamics in $\breve{\mathcal{S}}$ induced by (\ref{conical}). The disks in gray represent stationary points of $\breve{G}$.} 
\label{dynamics}
\end{figure}
Our first objective is to determine the following values.

\begin{lemma}\label{ind-values}
$\ind(\breve{g}_{h},0)=0$ and 
$\ind(\breve{g_{e}},0)=0$. 
\end{lemma}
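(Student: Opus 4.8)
The idea is to compute each index directly from the flow structure, using the fact that $\breve{g}_h$ and $\breve{g}_e$ are time-one maps of the flow $\breve{G}$, which is a suspension (adding $\dot\rho_\alpha=0$) of the planar flow $G$ on the sector $\mathcal{S}$. The key geometric observation is that, by construction of (\ref{conical}), for $r\geq 2r_0$ the flow is a pure sink $\dot r = -r$, and on the entire boundary rays $\varphi\in\{\alpha-2\beta,\alpha-\beta,\alpha+\beta,\alpha+2\beta\}$ one also has the sink $\dot r=-r$ (Remark \ref{rm61}). So on a large ball the time-one map points strictly inward except along the central ray $\varphi=\alpha$, where the actual hyperbolic/elliptic dynamics lives. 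This means $0$ is an isolated fixed point and the index is well defined.

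First I would handle $\breve{g}_h$. The plan is to use Theorem \ref{szymczakthm}: restrict attention to the invariant set $\{0\}$ and identify its stable/unstable set relative to a suitable half-space-like decomposition. More concretely, since the $\rho_\alpha$-direction is frozen ($\dot\rho_\alpha=0$), the fixed point set and dynamics are determined slice-by-slice by the planar flow $G$, whose time-one map $g_h$ on $\mathcal{S}$ has a standard hyperbolic phase portrait near $0$ with index $1+\tfrac{e-h}{2}=1-\tfrac{2}{2}=0$ by the Poincaré–Bendixson formula (two hyperbolic sectors). The suspension by a direction with trivial dynamics does not change the fixed point index, since $\breve{g}_h$ is homotopic through fixed-point-free-off-$0$ maps to $g_h\times \id$ on $\mathcal{S}\times[0,\pi]$, and by the product formula for the fixed point index, $\ind(g_h\times\id,0)=\ind(g_h,0)\cdot\ind(\id_{[0,\pi]},\text{pt})$. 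The second factor is the index of the identity on an interval at an endpoint-or-interior point, which is $1$ (the interval retracts to a point). Hence $\ind(\breve{g}_h,0)=0$.

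For $\breve{g}_e$ the same strategy applies: the planar time-one map $g_e$ has a phase portrait with two elliptic sectors and no hyperbolic ones, so the Poincaré–Bendixson index is $1+\tfrac{2-0}{2}=2$ — but this is the index of the flow's time-one map on the full plane, whereas on the sector $\mathcal{S}$ with the sink behavior imposed on the boundary rays the relevant local index is different. Here I would instead argue directly: on $\breve{\mathcal S}$ the map $\breve g_e$ is homotopic rel boundary to a map that is a sink outside a thin neighborhood of the central ray, and inside is elliptic; one computes via the additivity/Mayer–Vietoris formula (Theorem \ref{May-Viet}), splitting $\breve{\mathcal S}$ along the cone $\{r=r_0\}$ into an inner piece where the dynamics is elliptic-like and an outer piece which is a sink. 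On the outer piece the index is $1$; on the overlap $\{r=r_0\}$ the flow is the identity so one must be careful, but after a small perturbation making the overlap a sink its index is $1$; the inner elliptic piece then has index $\ind(\breve g_e,0) = \ind_{\text{inner}} + 1 - 1 = \ind_{\text{inner}}$, and a direct check — elliptic sectors in the plane suspended trivially and then capped by inward-pointing sink dynamics — gives $0$. The cleanest route, which I would actually write, is to exhibit an explicit homotopy (keeping $0$ the only fixed point) from $\breve g_e$ to a sink on all of $\breve{\mathcal S}$, using that elliptic-sector orbits all return toward $0$ so the time-one map is homotopic to a contraction; a sink has index $1$... which would give $1$, not $0$, so this naive homotopy must fail to keep $0$ isolated, and that failure is exactly the point.

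**Main obstacle.** The delicate part is the elliptic case: elliptic sectors make the naive "it's just a sink" homotopy break (periodic-like orbits appear), and the interaction between the planar elliptic dynamics, the frozen $\rho_\alpha$-direction, and the imposed sink behavior on the cone boundary is what forces the index down to $0$ rather than the planar value $2$. I expect the honest argument to go through Theorem \ref{szymczakthm} or a careful Conley-index / Mayer–Vietoris computation identifying that $\{0\}$ sits with its unstable set inside the "boundary" cone, so the $0$-branch of Theorem \ref{szymczakthm} applies and yields $\ind(\breve g_e,0)=0$; making the hypotheses of that theorem (isolated invariant set, $W^s$ or $W^u$ contained in the relevant hypersurface) precise for the flow (\ref{conical}) is where the real work lies.
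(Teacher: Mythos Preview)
Your handling of the hyperbolic case via Theorem~\ref{szymczakthm} matches the paper's proof. However, the alternative product-formula argument you sketch is not valid: the solid cone $\breve{\mathcal S}$ is \emph{not} the product $\mathcal S\times[0,\pi]$. It is obtained by rotating $\mathcal S$ about its central ray, so all slices $\{\rho_\alpha=\text{const}\}$ share that ray; the space is a quotient of the product, and $\breve g_h$ is not conjugate to $g_h\times\id$. (Also, $\id_{[0,\pi]}$ has no isolated fixed point, so the product formula could not be invoked as written.)

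The real gap is in the elliptic case. Your plan is to force the computation through Theorem~\ref{szymczakthm}, but neither hypothesis of that theorem can be met for $\breve g_e$. In elliptic dynamics every neighbourhood of $0$ contains entire homoclinic loops (the small ``petals''), so $\{0\}$ is \emph{not} an isolated invariant set in the sense of Definition~\ref{inv}; the theorem does not even apply. And even if one could sidestep that, the stable and unstable sets of $0$ coincide and fill a full three-dimensional region of petals, so neither $W^s$ nor $W^u$ can lie in any two-dimensional invariant hypersurface. Your own observation that a naive homotopy to a sink would yield index $1$ is exactly the symptom: the elliptic case genuinely requires a different computation.

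The paper handles $\breve g_e$ by a direct degree calculation. One extends $\breve g_e$ to $\bar g=i\circ\breve g_e\circ R:\mathbb R^3\to\mathbb R^3$ via a retraction $R:\mathbb R^3\to\breve{\mathcal S}$, and computes $\ind(\bar g,0)$ as the degree of the Gauss-type map $\xi:S^2\to S^2$, $\xi(p)=\varepsilon\,(p-\bar g(p))/|p-\bar g(p)|$ on a small sphere. The south pole $S$ turns out to be a regular value with exactly two preimages, $S$ and the north pole $N$, carrying local degrees $+1$ and $-1$ respectively; hence $\deg\xi=0$. This is an elementary but essentially computational step, and it is the ingredient your proposal is missing.
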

\begin{proof}
     The fact that $\ind(\breve{g}_{h},0)=0$ results straightforwardly from Theorem 2 in \cite{Szymczak} (Theorem \ref{szymczakthm} above).

In order to show that  $\ind(\breve{g}_{e},0)=0$, let us consider a retraction $R\colon \mathbb{R}^{3}\to \breve{\mathcal{S}}$ and the map $\bar{g}=i\circ \breve{g}_{e}\circ R\colon \mathbb{R}^{3}\to \mathbb{R}^{3}$, where $i\colon \breve{\mathcal{S}}\to \mathbb{R}^{3}$ denotes an inclusion of $\breve{\mathcal{S}}$ in $\mathbb{R}^{3}$. The fixed point index of such a map is equal to the degree of the associated map $\xi\colon S^{2}\to S^{2}$ given by the following formula: \begin{align}
     \xi(p)=\varepsilon \frac{p-\bar{g}(p)}{|p-\bar{g}(p)|},
\end{align} where $S^{2}$ is a boundary of an open three-dimensional ball $B(0,\varepsilon)$ with a sufficiently small radius $\varepsilon<r_{0}$.
Let us observe that the point $S=(0,0,-\varepsilon)$ is a regular value of $\xi$ and $\xi^{-1}(\lbrace S\rbrace )=\lbrace S,N\rbrace$, where $N$ is the North Pole of $S^{2}$ with the coordinates $(0,0,\varepsilon)$. 
Without loss of generality, we show the calculation in the case when $\varepsilon=1$. Note that the map $\xi$ is given by the following formula: \begin{align}
    \xi(\varphi,\theta)=\begin{cases}
        (\varphi,\theta) \  &\hbox{for $\theta\in [-\frac{\pi}{2},0]$}, \\
        (\varphi,3\theta)  \ &\hbox{for $\theta\in [0,\frac{\pi}{2}].$}
    \end{cases}
\end{align}
Transferring this map to Cartesian coordinates $p=(x,y,z)$, we get the following: \begin{align}
    \xi(x,y,z)=
        \begin{cases}
            (x,y,z) \ &\hbox{for $z\in [-1,0]$},\\ 
            ((4(x^{2}+y^{2})-3)x,(4(x^{2}+y^{2})-3)y,3z-4z^{3}) \ &\hbox{for $z\in (0,1]$}.
        \end{cases}
\end{align}
It remains to note that in fact $\xi^{-1}(\lbrace S\rbrace)=\lbrace S, N\rbrace$ and determine the local derivative $D\xi$ at these points. It is obvious that $D\xi_{S}=I_{2}$ since $\xi$ is the identity map at $S$ and thus $\mathrm{sign}({\det D\xi_{S}})=1$. Let us now determine the sign of $\det(D\xi_{N})$. We consider the tangent map $D\xi_{N}\colon T_{N}S^{2}\to T_{S}S^{2}$ with compatible bases $\mathcal{B}(T_{N}S^{2})=\lbrace [1,0,0], [0,1,0]\rbrace$ and $\mathcal{B}(T_{S}S^{2})=\lbrace [1,0,0], [0,-1,0]\rbrace$ that determine the orientation of the sphere. Taking the transformation of vectors from $T_{N}S^{2}$ into vectors in $T_{S}S^{2}$, we get the following result:\begin{align}
    D\xi_{N}=\left[\begin{array}{cc}-3&0\\0&3\end{array}\right].
\end{align}
Hence, we have $\mathrm{sign}(\det D\xi_{N})=-1$ and thus the map $\xi$ reverses the orientation of $S^{2}$ at $N$. Finally, $\deg(\xi,S^{2})=0$.
\end{proof}

\subsection{Periodic dynamics}
 In this section, we describe some special periodic dynamics, which we will use in  the  construction of our  map. First, let us define $\vee_{i=1}^{k}\breve{\mathcal{S}}_{i}$, a bouquet of $k$ solid cones with a family of associated flows $\breve{G}_{i}\colon \breve{\mathcal{S}}_{i}\times \mathbb{R}\to \breve{\mathcal{S}}_{i}$ (cf. \ref{newflow}). We require that the value of $r_{0}$ is the same for all flows in a given bouquet (see Figure \ref{dynamics}). Now, we place such a bouquet (for appropriately selected values of $\alpha$ and $\beta$) into the following region: \begin{align}\mathcal{J}_{k}=\lbrace (r,\rho_{\alpha},\varphi): \alpha-\beta \leq \varphi \leq \alpha+\beta\rbrace, \ \ \vee_{i=1}^{k}\breve{\mathcal{S}}_{i} \subset \mathcal{J}_{k}.\end{align} Note that $\mathcal{J}_{k}$ is a solid cone positioned symmetrically with respect to the plane $z=0$. The elements of the bouquet also lie symmetrically with respect to this plane. Let us now define the flow $S\colon \mathbb{R}^{3}\times \mathbb{R}\to \mathbb{R}^{3}$ given by $\dot{r}=-r$, and the map $G_{k}\colon \mathcal{J}_{k}\to \mathcal{J}_{k}$ given by the following system of equations: \begin{align}
    G_{k}(p)=\begin{cases}
        \breve{G}_{i}(p,1) \ &\hbox{for $p\in \breve{\mathcal{S}}_{i}$ and $i\in \lbrace 1,\ldots,k\rbrace$},\\
        S(p,1) \ &\hbox{otherwise.}
    \end{cases}
\end{align}

\begin{figure}[!h] 
\def\svgwidth{0.6\columnwidth}
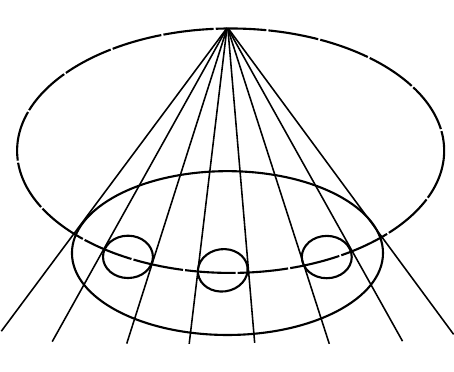
\caption{The solid cone $\mathcal{J}_{3}$ with $D_{r'}$ determined by the value $r=r'$.}
\label{thewedge}
\end{figure}
The map $G_{k}$ restricted to any element of a bouquet generates hyperbolic or elliptic dynamics, and outside the bouquet generates a sink.
Now let us fix the radius $r'>0$ and take  $D_{r'}\subset \mathcal{J}_{k}$, a disk determined by the value of $r'$. Next we consider the family $\lbrace D^{i}_{r'}\rbrace_{i=1}^{k}\subset D_{r'}$ of disks formed by the intersection of $D_{r'}$ with $\vee_{i=1}^{k}\breve{\mathcal{S}}_{i}$ (Figure \ref{thewedge}). 

Our aim  is to define a self-map of $\mathcal{J}_{k}$ that permutes these disks and which would (roughly speaking) provide the contribution to index equal $\reg_k$.  

To implement this idea, let us consider the unit disk $D=\lbrace (r,\varphi)\ |\ r\leq 1\rbrace$ in the plane and a subset of it $A=\lbrace (r,\varphi)\ | \ r\leq \frac{3}{4}\rbrace$. We fix a family $\lbrace D^{i} \rbrace_{i=1}^{k}$ of the copies of the same disk in $A$  with the centers located every $\frac{2\pi}{k}$ degree around the origin, lying on the same circle. Then let us define the map $v\colon D\to D$ given by the  formula $v(r,\varphi)=(r,\varphi+\delta(r))$, where $\delta\colon [0,1]\to [0,\frac{2\pi}{k}]$ is the map given by the formula: \begin{align}
    \delta(r)=\begin{cases}
        \frac{2\pi}{k} \ &\hbox{for $r\in [0,\frac{3}{4}]$},\\
        \frac{8\pi}{k}(1-r) \ &\hbox{for $r\in [\frac{3}{4},1]$.}
    \end{cases}
\end{align}
Now, let us define a homeomorphism $h_{r'}\colon D\to D_{r'}$, which maps the family $\lbrace D^{i}\rbrace_{i=1}^{k}$ onto the family $\lbrace D_{r'}^{i}\rbrace_{i=1}^{k}$ in $D_{r'}$ in such a way that $h_{r'}(D^{i})=D^{i}_{r'}$ (Figure \ref{disks}). Using this homeomorphism and the map $v$, we define the self-map of $D_{r'}$, i.e. $u_{r'}\colon D_{r'}\to D_{r'}$ given by the equation $u_{r'}(x)=h_{r'}\circ v\circ h_{r'}^{-1}(x)$, where $x$ denotes a two-dimensional coordinate of a point in $D_{r'}$. Note that maps $u_{r'}$ and $v$ are topologically conjugated. 
\begin{figure}[!h] 
\def\svgwidth{1\columnwidth}
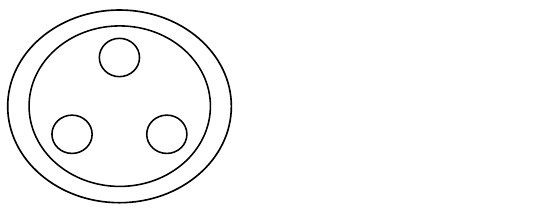
\caption{The example of the topological conjugation of maps $v$ and $u_{r'}$ for $k=3$. The dashed lines represent the intersection of $D_{r'}$ with the plane $z=0$, cf. Figure \ref{thewedge}.}
\label{disks}
\end{figure}
Since the map $v$ is not strictly related to the choice of the value of the radius, we can consider such conjugation for any other value of $r>0$. As a consequence, we can extend the map $u_{r'}$ to the self-map of $\mathcal{J}_{k}$, i.e. $u_{k}\colon \mathcal{J}_{k}\to \mathcal{J}_{k}$ by the following system of equations: \begin{align}u_{k}(r,x)=\label{65}
    \begin{cases}
        h_{r}\circ v\circ h_{r}^{-1}(x) \ &\hbox{for $x\in D_{r}$ and $r>0$,}\\
        0 \ &\hbox{for $r=0$.}
    \end{cases}
\end{align}
\vskip 5mm Finally, we define the main map  of this section, i.e. 
\begin{equation}\label{Theta}
   \Theta_{k}\colon \mathcal{J}_{k}\to \mathcal{J}_{k}, \ \Theta_{k}=u_{k}\circ G_{k}.
\end{equation}
In the next steps of the construction, we will show that this map generates  periodic dynamics in $\mathcal{J}_{k}$, related to the contribution of $\reg_k$'s with appropriate coefficients in the formula for indices of iterates.
\subsection{\label{bouquet} Bouquet of maps}
This short subsection is devoted to the definition of the bouquet consisting of a map of the form (\ref{Theta}). Let us assume for now that $k>1$. The situation in which $k=1$ will be described separately. Let $a_{k}$ be an arbitrary integer and consider a bouquet of $|a_{k}|$ regions of $\mathcal{J}_{k}$, i.e. $\vee_{j=1}^{|a_{k}|}(\mathcal{J}_{k})_{j}$. Now, with such a bouquet, we identify the bouquet of maps  $\vee_{j=1}^{|a_{k}|}(\Theta_{k})_{j}\colon \vee_{j=1}^{|a_{k}|}(\mathcal{J}_{k})_{j}\to \vee_{j=1}^{|a_{k}|}(\mathcal{J}_{k})_{j}$, canonically induced by the following formula:
\begin{align*}
    \vee_{j=1}^{|a_{k}|}(\Theta_{k})_{j}(p)=(\Theta_{k})_{j}(p) \ \hbox{for $p\in(\mathcal{J}_{k})_{j}$ and $j\in \lbrace 1,\ldots,|a_{k}|\rbrace$.} \end{align*}
We introduce a slight change for $k=1$. For $a_{1}>0$ we consider a bouquet of $a_{1}-1$ regions of $\mathcal{J}_{1}$, i.e. $\vee_{j=1}^{a_{1}-1}(\mathcal{J}_{1})_{j}$, and for $a_{1}<0$ a bouquet of $|a_{1}|+1$ regions of $\mathcal{J}_{1}$, i.e. $\vee_{j=1}^{|a_{1}|+1}(\mathcal{J}_{1})_{j}$.
\subsection{\label{64}Odd Dold's coefficients}
Let us consider an arbitrary sequence of Dold's coefficients $\lbrace a_{n}\rbrace_{n=1}^{\infty}$.
We can assume without loss of generality that this sequence has only non-zero terms (see \ref{coeff}). \vskip 5mm 
Let us fix a subsequence $(a_{2n-1})_{n}$ of odd indices and consider the following solid cones, which are formed after the rotation around the central rays lying in the $\mathrm{OXY}$ plane: \begin{align}\mathcal{M}_{2k-1}=\left\lbrace (r,\rho_{\frac{2\pi}{2^{k}}},\varphi)\mathbin{:} \frac{2\pi}{2^{k}}-\frac{2\pi}{2^{k+3}}\leq \varphi \leq \frac{2\pi}{2^{k}}+\frac{2\pi}{2^{k+3}}\right\rbrace.\end{align} Note that these cones are symmetrical with respect to $z=0$ plane and converges to the ray $\mathcal{M}_{\infty}=\lbrace (r,\varphi) \mathbin{:}  \varphi=0 \rbrace$ that lies on the plane $z=0$ (Figure \ref{wedge}). 
\begin{figure}[!h] 
\def\svgwidth{0.65\columnwidth}
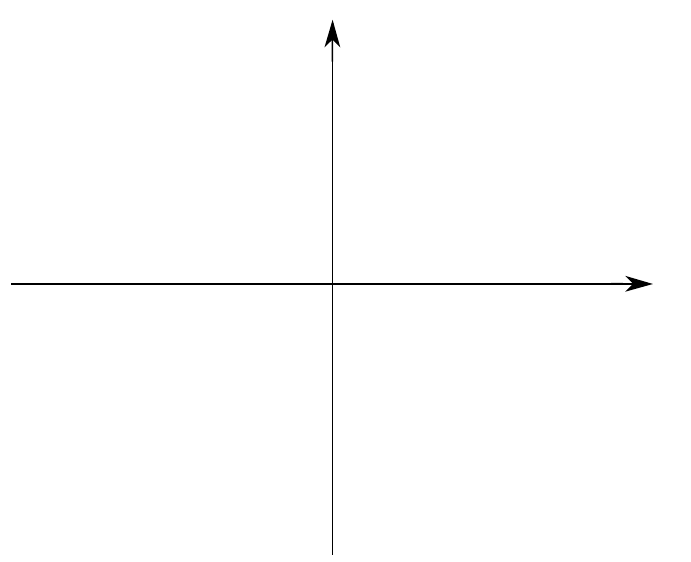
\caption{The graphical illustration of the cones $\mathcal{M}_{2k-1}$ and $\mathcal{M}_{\infty}$.} 
\label{wedge}
\end{figure}
We take a wedge of $|a_{2k-1}|$ solid cones of $\mathcal{J}_{2k-1}$ in each of $\mathcal{M}_{2k-1}$ for $k>1$ and for $k=1$, a wedge of $a_{1}-1$ or $|a_{1}|+1$ cones of $\mathcal{J}_{1}$, depending on the value of $a_{1}$ (see \ref{bouquet}). Now, we define the map $\Theta\colon \vee_{k=1}^{\infty} \mathcal{M}_{2k-1}\to \vee_{k=1}^{\infty}\mathcal{M}_{2k-1}$ by the following formulas: \begin{align}\label{map}
    \Theta(p)=\begin{cases}
    \vee_{j=1}^{|a_{1}|\pm 1}(\Theta_{1})_{j}(p) \ &\hbox{for $p\in \mathcal{M}_{1}\cap (\mathcal{J}_{1})_{j}$ and $j\in \lbrace 1,\ldots, |a_{1}|\pm 1\rbrace$},\\
        \vee_{j=1}^{|a_{2k-1}|}(\Theta_{2k-1})_{j}(p) \ &\hbox{for $p\in \mathcal{M}_{2k-1}\cap (\mathcal{J}_{2k-1})_{j}$, $k>1$ and $j\in \lbrace 1,\ldots, |a_{2k-1}|\rbrace$},\\
        S(p,1) \ &\hbox{for $p\in \mathcal{M}_{\infty}$ and otherwise},
    \end{cases}
\end{align}
 and require that the following conditions are satisfied (remind that $S(p,1)$ is a time-one map of the flow $\dot{r}=-r$).
\begin{itemize}
    \item The value of $r_{0}$ for each map $(G_{2k-1}(p))_{i}$ is equal to $\frac{1}{k}$.
    \item If $a_{2k-1}<0$, then $(G_{2k-1}(p))_{j}$ generates hyperbolic dynamics in each of $\breve{\mathcal{S}}_{i}\subset (\mathcal{J}_{2k-1})_{j}$ (the choice of ($-$) in (\ref{conical})).
    \item If $a_{2k-1}>0$, then $(G_{2k-1}(p))_{j}$ generates elliptic dynamics in each of $\breve{\mathcal{S}}_{i}\subset (\mathcal{J}_{2k-1})_{j}$ (the choice of ($+$) in (\ref{conical})).
\end{itemize}
\subsection{Even Dold's coefficients}
Let us consider the following solid cones, which are formed after the rotation around the central rays lying in the $\mathrm{OXZ}$ plane: \begin{align}\begin{split}
    &\bar{\mathcal{A}}_{+}=\left\lbrace (r,\rho_{\frac{\pi}{2}},\varphi) \mathbin{:} \frac{3\pi}{8}\leq \varphi \leq \frac{5\pi}{8}\right\rbrace, \ \ \mathcal{A}_{+}=\left\lbrace (r,\rho_{\frac{\pi}{2}},\varphi) \mathbin{:} \frac{\pi}{4}\leq \varphi \leq \frac{3\pi}{4}\right\rbrace, \\ &\bar{\mathcal{A}}_{-}=\left\lbrace (r,\rho_{\frac{3\pi}{2}},\varphi) \mathbin{:} \frac{11\pi}{8}\leq \varphi \leq \frac{13\pi}{8}\right\rbrace, \ \ \mathcal{A}_{-}=\left\lbrace (r,\rho_{\frac{3\pi}{2}},\varphi) \mathbin{:} \frac{5\pi}{4}\leq \varphi \leq \frac{7\pi}{4}\right\rbrace. \end{split}
\end{align}
 Note that $\mathcal{A}_{+}$ and $\mathcal{A}_{-}$ are symmetrical with respect to the plane $z=0$ and 
 \begin{align}(\mathcal{A}_{+}\vee \mathcal{A}_{-})\cap \vee_{k=1}^{\infty} \mathcal{M}_{2k-1}=\lbrace 0\rbrace.\end{align} Since regions $\bar{\mathcal{A}}_{\pm}\subset \mathcal{A}_{\pm}$ are homeomorphic to $\mathbb{R}^{3}_{+}$, we define two maps $f_{\pm}\colon \bar{\mathcal{A}}_{\pm}\to \bar{\mathcal{A}}_{\pm}$ in the same way as in Section \ref{sec3}. 
 \begin{figure}[!h] 
\def\svgwidth{0.65\columnwidth}
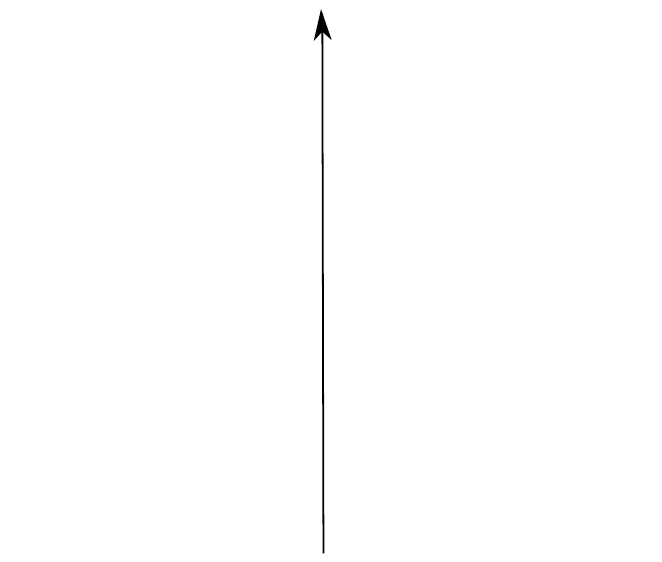
\caption{The solid cones $\mathcal{A}_{+}$ and $\mathcal{A}_{-}$ with an infinite number of the main $\mathcal{C}_{k_{\pm}}$ and the intermediate $\mathcal{I}_{k_{\pm}}$ surfaces inside.} 
\label{uv}
\end{figure}
 The construction of such maps is obvious, so we give only the values of integers $p_{k}$, which specify the indices of maps. We also assume that the integers $p_{k}$ are the same for each of the maps $f_{+}$ and $f_{-}$. Let $p_{2(2k-1)}=1$ for $k>0$, $p_{2k}=1+ka_{2k}$ for even $k>0$ and \begin{align}\label{int}
    p_{2k-1}=\begin{cases}
    1+a_{2} \ &\hbox{if $a_{1}<0$ and $k=1$},\\
    1+(a_{2}+a_{1}-1) \ &\hbox{if $a_{1}>0$ and $k=1$},\\
        1+a_{2(2k-1)}(2k-1) \ &\hbox{if $a_{2k-1}<0$ and $k> 1$},\\
        1+(a_{2(2k-1)}+a_{2k-1})(2k-1) &\hbox{if $a_{2k-1}>0$ and $k>1$.}
    \end{cases}
\end{align}  
Now, let $(\theta_{1})_{\pm}$ denote latitudes of the surfaces $(C_{1})_{\pm}\subset \mathcal{A}_{\pm}$. We define the map $\Phi\colon \mathcal{A}_{+}\vee \mathcal{A}_{-}\to \mathcal{A}_{+}\vee \mathcal{A}_{-}$ (in the same conical coordinates as given in Subsection \ref{constr}) by the following system of equations:
 \begin{align}\label{mappm}
    \Phi(x, \theta)=\begin{cases}
        f_{+}(x,\theta) \ &\hbox{for $\theta\in [(\theta_{1})_{+},\frac{\pi}{2}$]}, \\
         (x,\bar{\mu}(\theta))\circ S(x,\omega(\theta))\circ H_{p_{1}}(x,1-\omega(\theta))&\hbox{for $\theta \in [\frac{\pi}{4},(\theta_{1})_{+}]$},\\
         f_{-}(x, \theta) \ &\hbox{for $\theta\in [-\frac{\pi}{2},(\theta_{1})_{-}$]},\\
         (x,\bar{\mu}(\theta))\circ S(x,\omega(\theta)) \circ H_{p_{1}}(x,1-\omega(\theta)) &\hbox{for $\theta \in [(\theta_{1})_{-},
        -\frac{\pi}{4}]$},
    \end{cases}
\end{align}
where $\omega\colon [(\theta_{1})_{-},
        -\frac{\pi}{4}]\cup [\frac{\pi}{4},(\theta_{1})_{+}]\to [0,1]$ and $\bar{\mu}\colon[(\theta_{1})_{-},
        -\frac{\pi}{4}]\cup [\frac{\pi}{4},(\theta_{1})_{+}]\to [(\theta_{1})_{-},
        -\frac{\pi}{4}]\cup [\frac{\pi}{4},(\theta_{1})_{+}]$ are maps whose graphs are pictured in Figure \ref{twonewmaps}.
\begin{figure}[h!] 
\def\svgwidth{0.45\columnwidth} 
\begingroup%
  \makeatletter%
  \providecommand\color[2][]{%
    \errmessage{(Inkscape) Color is used for the text in Inkscape, but the package 'color.sty' is not loaded}%
    \renewcommand\color[2][]{}%
  }%
  \providecommand\transparent[1]{%
    \errmessage{(Inkscape) Transparency is used (non-zero) for the text in Inkscape, but the package 'transparent.sty' is not loaded}%
    \renewcommand\transparent[1]{}%
  }%
  \providecommand\rotatebox[2]{#2}%
  \newcommand*\fsize{\dimexpr\f@size pt\relax}%
  \newcommand*\lineheight[1]{\fontsize{\fsize}{#1\fsize}\selectfont}%
  \ifx\svgwidth\undefined%
    \setlength{\unitlength}{317.48031496bp}%
    \ifx\svgscale\undefined%
      \relax%
    \else%
      \setlength{\unitlength}{\unitlength * \real{\svgscale}}%
    \fi%
  \else%
    \setlength{\unitlength}{\svgwidth}%
  \fi%
  \global\let\svgwidth\undefined%
  \global\let\svgscale\undefined%
  \makeatother%
  \begin{picture}(1,0.84821429)%
    \lineheight{1}%
    \setlength\tabcolsep{0pt}%
    \put(0,0){\includegraphics[width=\unitlength,page=1]{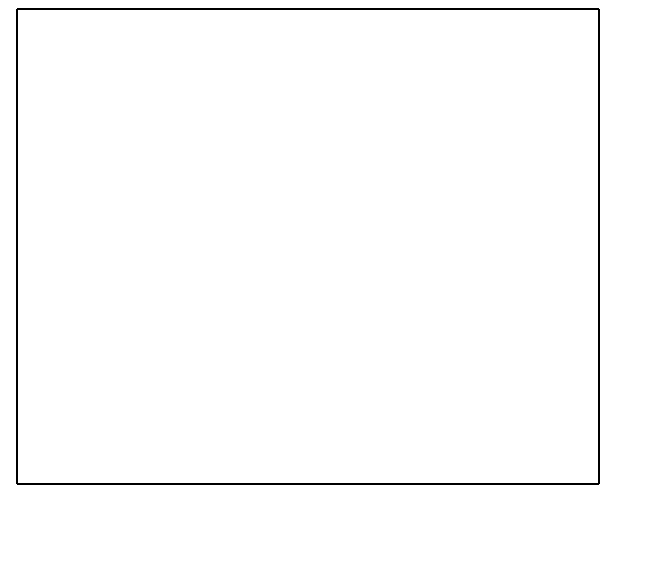}}%
    \put(0.47254793,0.00994469){\makebox(0,0)[lt]{\lineheight{1.25}\smash{\begin{tabular}[t]{l}$0$\end{tabular}}}}%
    \put(0.13527421,0.00994469){\makebox(0,0)[lt]{\lineheight{1.25}\smash{\begin{tabular}[t]{l}$(\theta_{1})_{-}$\end{tabular}}}}%
    \put(0.34633992,0.00994469){\makebox(0,0)[lt]{\lineheight{1.25}\smash{\begin{tabular}[t]{l}$-\frac{\pi}{4}$\end{tabular}}}}%
    \put(0.56074196,0.00994469){\makebox(0,0)[lt]{\lineheight{1.25}\smash{\begin{tabular}[t]{l}$\frac{\pi}{4}$\end{tabular}}}}%
    \put(0.80261785,0.00994469){\makebox(0,0)[lt]{\lineheight{1.25}\smash{\begin{tabular}[t]{l}$(\theta_{1})_{+}$\end{tabular}}}}%
    \put(0.03640727,0.73558075){\makebox(0,0)[lt]{\lineheight{1.25}\smash{\begin{tabular}[t]{l}$1$\end{tabular}}}}%
    \put(0.66267742,0.62254844){\makebox(0,0)[lt]{\lineheight{1.25}\smash{\begin{tabular}[t]{l}$\omega$\end{tabular}}}}%
    \put(0,0){\includegraphics[width=\unitlength,page=2]{omegamap.pdf}}%
  \end{picture}%
\endgroup%

    \centering
    \subfloat{{\def\svgwidth{0.45\columnwidth}
\begingroup%
  \makeatletter%
  \providecommand\color[2][]{%
    \errmessage{(Inkscape) Color is used for the text in Inkscape, but the package 'color.sty' is not loaded}%
    \renewcommand\color[2][]{}%
  }%
  \providecommand\transparent[1]{%
    \errmessage{(Inkscape) Transparency is used (non-zero) for the text in Inkscape, but the package 'transparent.sty' is not loaded}%
    \renewcommand\transparent[1]{}%
  }%
  \providecommand\rotatebox[2]{#2}%
  \newcommand*\fsize{\dimexpr\f@size pt\relax}%
  \newcommand*\lineheight[1]{\fontsize{\fsize}{#1\fsize}\selectfont}%
  \ifx\svgwidth\undefined%
    \setlength{\unitlength}{317.48031496bp}%
    \ifx\svgscale\undefined%
      \relax%
    \else%
      \setlength{\unitlength}{\unitlength * \real{\svgscale}}%
    \fi%
  \else%
    \setlength{\unitlength}{\svgwidth}%
  \fi%
  \global\let\svgwidth\undefined%
  \global\let\svgscale\undefined%
  \makeatother%
  \begin{picture}(1,0.84821429)%
    \lineheight{1}%
    \setlength\tabcolsep{0pt}%
    \put(0,0){\includegraphics[width=\unitlength,page=1]{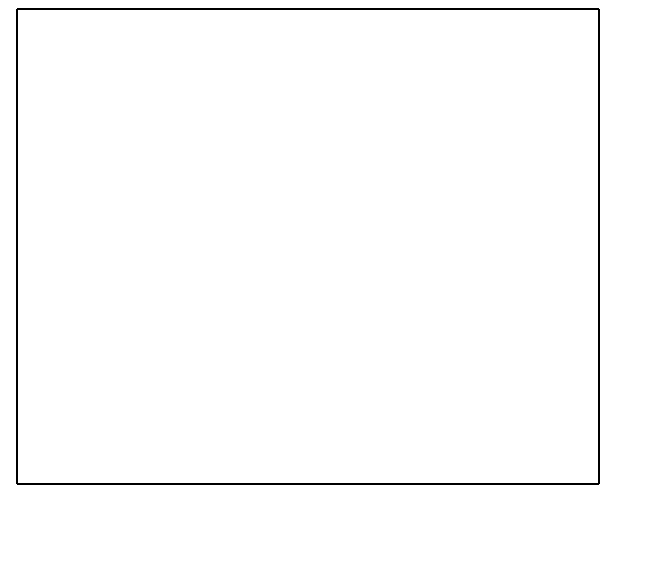}}%
    \put(0.47254793,0.00994469){\makebox(0,0)[lt]{\lineheight{1.25}\smash{\begin{tabular}[t]{l}$0$\end{tabular}}}}%
    \put(0.13527421,0.00994469){\makebox(0,0)[lt]{\lineheight{1.25}\smash{\begin{tabular}[t]{l}$(\theta_{1})_{-}$\end{tabular}}}}%
    \put(0.32198804,0.00994469){\makebox(0,0)[lt]{\lineheight{1.25}\smash{\begin{tabular}[t]{l}$-\frac{\pi}{4}$\end{tabular}}}}%
    \put(0.56074196,0.00994469){\makebox(0,0)[lt]{\lineheight{1.25}\smash{\begin{tabular}[t]{l}$\frac{\pi}{4}$\end{tabular}}}}%
    \put(0.80261785,0.00994469){\makebox(0,0)[lt]{\lineheight{1.25}\smash{\begin{tabular}[t]{l}$(\theta_{1})_{+}$\end{tabular}}}}%
    \put(0.03640727,0.73558075){\makebox(0,0)[lt]{\lineheight{1.25}\smash{\begin{tabular}[t]{l}$(\theta_{1})_{+}$\end{tabular}}}}%
    \put(0.62371438,0.68464577){\makebox(0,0)[lt]{\lineheight{1.25}\smash{\begin{tabular}[t]{l}$\bar{\mu}$\end{tabular}}}}%
    \put(0,0){\includegraphics[width=\unitlength,page=2]{mu1map.pdf}}%
  \end{picture}%
\endgroup%
}}%
\caption{The graphs of the maps $\omega$ and $\bar{\mu}$.}
\label{twonewmaps}
\end{figure}
\begin{remark}
    The map $\Phi$ given in (\ref{mappm}) is a continuous extension of the map $f_{+}\vee f_{-}$ to the self-map of $\mathcal{A}_{+}\vee \mathcal{A}_{-}$ that has defined the sink dynamics of $S(p,1)$ on the boundary without changing the fixed point index of a map.
\end{remark}
\subsection{Construction of the map}
 We define the map $g\colon \mathbb{R}^{3}\to \mathbb{R}^{3}$ by the following equality: \begin{align}\label{defmap}
    g(p)=\begin{cases}
        \Theta(p) \ &\hbox{for $p\in \vee_{k=1}^{\infty}\mathcal{M}_{2k-1}$}, \\
        \Phi(p) \ &\hbox{for $p\in \mathcal{A}_{+}\vee \mathcal{A}_{-}$},   \\
        S(p,1) \ &\hbox{otherwise}.
    \end{cases}
\end{align}
Note that the map $g$ is a well-defined orientation-preserving homeomorphism of $\mathbb{R}^{3}$. Finally, we define the desire map, an orientation-reversing homeomorphism of $\mathbb{R}^{3}$, as follows. \begin{definition}\label{map}
    Let $p=(x,y,z)$ and $s(x,y,z)=(x,y,-z)$ denotes the reflection map with respect to the plane $z=0$. We define an orientation-reversing homeomorphism of $\mathbb{R}^{3}$ as the following composition $f\colon \mathbb{R}^{3}\to \mathbb{R}^{3}$, $f=s\circ g$.
\end{definition} 
We will show in Section \ref{proof} that $f$ realizes (\ref{53}).
\subsection{Computation of $\ind(s\circ \breve{g}_{h}\circ R,0)$ and $\ind(s\circ \breve{g}_{e}\circ R,0)$} 
For the further calculations, we need to calculate $\ind(s\circ \breve{g}_{h}\circ R,0)$ and $\ind(s\circ \breve{g}_{e}\circ R,0)$, where $R$ denotes a retraction of $\mathbb{R}^{3}$ onto $\breve{\mathcal{S}}$.

First, let us state the following lemma. \begin{lemma}\label{sym}
    Let $s(x,y,z)=(x,y,-z)$ and $g\colon \mathbb{R}^{3}\to \mathbb{R}^{3}$ be an orientation-preserving homeomorphism of $\mathbb{R}^{3}$ such that $g(\mathbb{R}^{3}_{-})\subseteq (\mathbb{R}^{3}_{-})$  and $g(\mathbb{R}^{3}_{+})\subseteq (\mathbb{R}^{3}_{+})$. Then \begin{align}\ind((s\circ g)^{n},0)=\ind((\restr{g}{z=0})^{n},0)\end{align} for each odd $n$. 
\end{lemma}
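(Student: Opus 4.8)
The plan is to treat odd $n$ by an algebraic reduction to the single-iterate case and then to prove that case by a deformation argument. (Throughout, as is implicit in the statement, $0$ is an isolated fixed point of each $(s\circ g)^{n}$.)

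First I would use the elementary identity, valid for odd $n$, that $(s\circ g)^{n}=s\circ G$ with $G:=(g\circ s)^{n-1}\circ g$. Here $G$ is again an orientation-preserving self-homeomorphism of $\mathbb{R}^{3}$ (since $n-1$ is even and $g$, $s$ are orientation-preserving and orientation-reversing respectively). From $g(\mathbb{R}^{3}_{\pm})\subseteq\mathbb{R}^{3}_{\pm}$ and $s(\mathbb{R}^{3}_{\pm})=\mathbb{R}^{3}_{\mp}$ one checks that $g\circ s$ maps $\mathbb{R}^{3}_{\pm}$ into $\mathbb{R}^{3}_{\mp}$, so the even iterate $(g\circ s)^{n-1}$ preserves each half-space, hence so does $G$; also $G(0)=0$, and since $s$ is the identity on $\{z=0\}$ we have $\restr{(g\circ s)}{z=0}=\restr{g}{z=0}$, whence $\restr{G}{z=0}=(\restr{g}{z=0})^{n}$. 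Since $0$ is an isolated fixed point of $s\circ G=(s\circ g)^{n}$, it therefore suffices to prove the following: for any orientation-preserving self-homeomorphism $G$ of $\mathbb{R}^{3}$ with $G(\mathbb{R}^{3}_{\pm})\subseteq\mathbb{R}^{3}_{\pm}$, $G(0)=0$, and $0$ an isolated fixed point of $s\circ G$, one has $\ind(s\circ G,0)=\ind(\restr{G}{z=0},0)$. Applied to the $G$ above, this gives $\ind((s\circ g)^{n},0)=\ind((\restr{g}{z=0})^{n},0)$, which is the assertion.

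To prove the displayed fact, write $G=(P,Q)$ with $P\colon\mathbb{R}^{3}\to\mathbb{R}^{2}$ and $Q\colon\mathbb{R}^{3}\to\mathbb{R}$; the hypotheses give $Q(x,y,0)=0$, $Q\geq 0$ on $\{z\geq 0\}$, $Q\leq 0$ on $\{z\leq 0\}$, and $P(\cdot,\cdot,0)=\restr{G}{z=0}$, and --- because $s$ fixes $\{z=0\}$ pointwise --- every fixed point of $\restr{G}{z=0}$ near $0$ is a fixed point of $s\circ G$, so $0$ is also isolated for $\restr{G}{z=0}$. Now I would consider the homotopy $H_{t}=s\circ K_{t}$, $t\in[0,1]$, where $K_{t}(x,y,z)=\bigl(P(x,y,(1-t)z),\,(1-t)Q(x,y,z)+tz\bigr)$, so that $H_{0}=s\circ G$ and $H_{1}(x,y,z)=(\restr{G}{z=0}(x,y),-z)$, i.e. $H_{1}=(\restr{G}{z=0})\times s_{\mathbb{R}}$ with $s_{\mathbb{R}}(z)=-z$. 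The key point is that $0$ is the only fixed point of $H_{t}$ in a fixed small ball for every $t$: the last coordinate of $H_{t}(x,y,z)$ equals $-\bigl((1-t)Q(x,y,z)+tz\bigr)$, which is $\leq 0$ for $z>0$ and $\geq 0$ for $z<0$, so any fixed point must have $z=0$, and there the fixed-point equation reduces to $\restr{G}{z=0}(x,y)=(x,y)$, whose only nearby solution is $0$ --- thus the fixed-point set of the whole homotopy is $\{0\}\times[0,1]$ (note that $K_{t}$ need not be injective, but only continuity is needed). Homotopy invariance of the fixed point index then gives $\ind(s\circ G,0)=\ind\bigl((\restr{G}{z=0})\times s_{\mathbb{R}},0\bigr)$, and by multiplicativity of the index under products, together with $\ind(s_{\mathbb{R}},0)=\operatorname{sign}(1-(-1))=1$, the right-hand side equals $\ind(\restr{G}{z=0},0)$. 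Combined with the first paragraph, this proves the lemma.

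I expect the only step requiring genuine care --- the main obstacle --- to be the verification that $H_{t}$ has no fixed point other than $0$ near the origin: one must keep the sign argument on $\{z\neq 0\}$ relying only on the inclusions $G(\mathbb{R}^{3}_{\pm})\subseteq\mathbb{R}^{3}_{\pm}$ (so that neither invariance of domain nor surjectivity of $G$ onto the half-spaces is invoked), and one must note that the small ball on which the index is evaluated can be chosen independently of $t$, which is legitimate precisely because the fixed-point locus of the homotopy is exactly $\{0\}\times[0,1]$. The remaining ingredients --- the composition algebra for $s\circ g$, the value $\ind(s_{\mathbb{R}},0)=1$, and the product formula for the fixed point index --- are entirely standard.
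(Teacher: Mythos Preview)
Your proof is correct. Both your argument and the paper's rest on the same core observation: the half-space hypotheses force the third coordinate of the homotoped map to have the wrong sign on $\{z\neq 0\}$, so all fixed points lie on $\{z=0\}$, where the map reduces to $(\restr{g}{z=0})^{n}$. The executions differ in two respects. First, the paper does not perform your algebraic reduction $(s\circ g)^{n}=s\circ G$; it simply takes the straight-line homotopy $H_{t}=(1-t)(s\circ g)^{n}+t\,g^{n}\circ R$, where $R(x,y,z)=(x,y,0)$, and observes directly that for odd $n$ the endpoint $(s\circ g)^{n}$ already sends $\mathbb{R}^{3}_{\pm}$ into $\mathbb{R}^{3}_{\mp}$. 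Second, the paper's target map is $g^{n}\circ R$, so the final identification $\ind(g^{n}\circ R,0)=\ind((\restr{g}{z=0})^{n},0)$ comes from the retraction (commutativity) property of the index, whereas you homotope to the product $(\restr{G}{z=0})\times s_{\mathbb{R}}$ and invoke multiplicativity together with $\ind(s_{\mathbb{R}},0)=1$. Your route is slightly longer but entirely sound; the paper's linear homotopy is a touch more economical since it avoids both the preliminary rewriting and the product-formula step.
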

\begin{proof}
  Let $U$ be an open neighborhood of $0$ and $H_{t}(x,y,z)\colon U\times [0,1]\to \mathbb{R}^{3}$ a standard linear homotopy between $(s\circ g)^{n}$ and $g^{n}\circ R$, where $R(x,y,z)=(x,y,0)$. Note that: 
  \begin{align}
  \begin{split}
      & \hbox{if} \ (x,y,z)\in \mathbb{R}^{3}_{+}\cap U\implies H^{(n)}_{t}(x,y,z)\in \mathbb{R}^{3}_{-}  \ \hbox{for} \ t\in [0,1],\\
      &\hbox{if} \ (x,y,z)\in \mathbb{R}^{3}_{-}\cap U\implies H^{(n)}_{t}(x,y,z)\in \mathbb{R}^{3}_{+}  \ \hbox{for} \ t\in [0,1].
      \end{split}
  \end{align}
 Thus, the point $0$ is the only fixed point of $H_{t}$ in $U$, hence we immediately obtain: \begin{align}
      \ind((s\circ g)^{n},0)=\ind(g^{n}\circ R,0)=\ind((\restr{g}{z=0})^{n},0).
  \end{align}
\end{proof}
\begin{lemma}\label{index_0}
$\ind(s\circ \breve{g}_{h}\circ R,0)=0$ and $\ind(s\circ \breve{g}_{e}\circ R,0)=2$.
\end{lemma}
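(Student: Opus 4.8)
The plan is to reduce both indices to two-dimensional phase portraits, by combining the symmetry of the construction with (the proof technique of) Lemma~\ref{sym}; one could alternatively redo the Gauss-map computation of Lemma~\ref{ind-values} with the reflection $s$ inserted, but the route below is cleaner.

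\emph{Set-up.} The solid cone $\breve{\mathcal S}$ is placed symmetrically with respect to $\{z=0\}$, with its central ray inside $\{z=0\}$. I would take $R\colon\mathbb R^3\to\breve{\mathcal S}$ to be the nearest-point retraction (well defined since $\breve{\mathcal S}$ is a convex solid cone), which is automatically equivariant under every isometry preserving $\breve{\mathcal S}$, in particular under $s$. A short argument shows that $R$ respects the sign of $z$ (i.e.\ $R(\{z\ge0\})\subseteq\{z\ge0\}$ and $R(\{z\le0\})\subseteq\{z\le0\}$): for $p$ with positive $z$-coordinate, the reflection $s(q)$ of any nearest point $q$ of $\breve{\mathcal S}$ is at least as close to $p$, so a nearest point with nonnegative $z$-coordinate exists. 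On the other hand, since $\breve G$ has $\dot\rho_\alpha=0$ and $\rho_\alpha$ is exactly the rotation angle about the (horizontal) central ray, both time-one maps $\breve g_h,\breve g_e$ preserve every half-plane $\{\rho_\alpha=\mathrm{const}\}$, and $\{z=0\}$ is the union of two such half-planes; hence $\breve g_h,\breve g_e$ preserve $\{z=0\}\cap\breve{\mathcal S}$, $\{z>0\}\cap\breve{\mathcal S}$ and $\{z<0\}\cap\breve{\mathcal S}$. Putting the two observations together, each of $g:=\breve g_h\circ R$ and $g:=\breve g_e\circ R$ is a continuous self-map of $\mathbb R^3$ with $g(\overline{\mathbb R^3_+})\subseteq\overline{\mathbb R^3_+}$, $g(\overline{\mathbb R^3_-})\subseteq\overline{\mathbb R^3_-}$, $g(\{z=0\})\subseteq\{z=0\}$, and $0$ as its only fixed point near $0$.

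\emph{Reduction to the plane.} The proof of Lemma~\ref{sym} — which uses only the half-space preservation, not that $g$ is a homeomorphism — then applies with $n=1$: the linear homotopy $H_t$ between $s\circ g$ and $g\circ R_0$, where $R_0(x,y,z)=(x,y,0)$, has $0$ as its only fixed point on a small sphere (on $\{z=0\}$ it is constantly $g|_{\{z=0\}}$, whose only fixed point near $0$ is $0$; off $\{z=0\}$ the $z$-coordinate of $H_t(p)$ is of the sign opposite to that of $p$, or vanishes while that of $p$ does not). Therefore $\ind(s\circ\breve g_h\circ R,0)=\ind((\breve g_h\circ R)|_{\{z=0\}},0)$ and likewise for $\breve g_e$. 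Finally I would identify these planar maps. On $\{z=0\}$ the retraction $R$ collapses $\mathbb R^3\setminus\breve{\mathcal S}$ onto the two edges of the equatorial sector $\sigma=\breve{\mathcal S}\cap\{z=0\}$, where by Remark~\ref{rm61} the dynamics is the sink $\dot r=-r$; on $\sigma$ the time-one map is that of the planar flow~(\ref{conical}), whose phase portrait at $0$ has two hyperbolic (resp.\ elliptic) sectors coming from $\mathcal I_C$ together with sink-type behaviour on the $\mathcal I_R\cup\mathcal I_L$ parts. So $(\breve g_h\circ R)|_{\{z=0\}}$ and $(\breve g_e\circ R)|_{\{z=0\}}$ are, up to a fixed-point-free homotopy, planar time-one maps with $(e,h)=(0,2)$ and $(e,h)=(2,0)$ respectively, and the Poincar\'e--Bendixson index formula recalled in Subsection~\ref{canonical} gives $1+\tfrac{0-2}{2}=0$ and $1+\tfrac{2-0}{2}=2$, as claimed.

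\textbf{The main obstacle} I expect is the set-up: checking from the definitions of Section~\ref{sec5} that $\breve g_h,\breve g_e$ genuinely preserve the two open half-spaces $\{z>0\}$, $\{z<0\}$ — which hinges on $\rho_\alpha$ being the rotation angle about a ray lying in $\{z=0\}$, so that $\{z=0\}$ is foliated by level sets of $\rho_\alpha$ — and that the retraction $R$ can be chosen to respect the sign of $z$. Everything afterwards is routine: the homotopy bookkeeping of Lemma~\ref{sym} carries over verbatim, the restriction/commutativity property of the fixed-point index disposes of the retraction, and the two planar indices follow at once from the sector count.
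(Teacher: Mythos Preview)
Your proposal is correct and follows essentially the same route as the paper: reduce to the planar restriction via Lemma~\ref{sym} and then read off the indices $0$ and $2$ from the Poincar\'e--Bendixson sector count, exactly as the paper does with reference to Figure~\ref{dynamicsplane}. Your version is more careful in that you explicitly verify the half-space preservation hypotheses and note that the argument of Lemma~\ref{sym} goes through even though $\breve g_{h}\circ R$ and $\breve g_{e}\circ R$ are not homeomorphisms---a point the paper glosses over.
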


\begin{proof}
Applying Lemma \ref{sym} and Poincaré-Bendixson formula (see Figure \ref{dynamicsplane}) we get thesis. The same result is satisfied for any odd iteration of such maps. 
\end{proof}

\begin{figure}[!h] 
\def\svgwidth{1\columnwidth}
\begingroup%
  \makeatletter%
  \providecommand\color[2][]{%
    \errmessage{(Inkscape) Color is used for the text in Inkscape, but the package 'color.sty' is not loaded}%
    \renewcommand\color[2][]{}%
  }%
  \providecommand\transparent[1]{%
    \errmessage{(Inkscape) Transparency is used (non-zero) for the text in Inkscape, but the package 'transparent.sty' is not loaded}%
    \renewcommand\transparent[1]{}%
  }%
  \providecommand\rotatebox[2]{#2}%
  \newcommand*\fsize{\dimexpr\f@size pt\relax}%
  \newcommand*\lineheight[1]{\fontsize{\fsize}{#1\fsize}\selectfont}%
  \ifx\svgwidth\undefined%
    \setlength{\unitlength}{352.5bp}%
    \ifx\svgscale\undefined%
      \relax%
    \else%
      \setlength{\unitlength}{\unitlength * \real{\svgscale}}%
    \fi%
  \else%
    \setlength{\unitlength}{\svgwidth}%
  \fi%
  \global\let\svgwidth\undefined%
  \global\let\svgscale\undefined%
  \makeatother%
  \begin{picture}(1,0.38297872)%
    \lineheight{1}%
    \setlength\tabcolsep{0pt}%
    \put(0,0){\includegraphics[width=\unitlength,page=1]{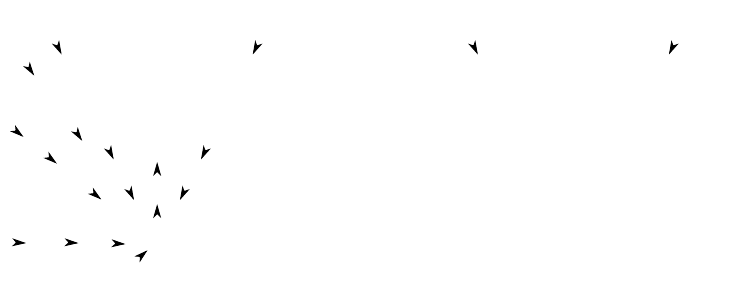}}%
    \put(0.20815629,0.00544391){\makebox(0,0)[lt]{\lineheight{1.25}\smash{\begin{tabular}[t]{l}$0$\end{tabular}}}}%
    \put(0.77490637,0.00544304){\makebox(0,0)[lt]{\lineheight{1.25}\smash{\begin{tabular}[t]{l}$0$\end{tabular}}}}%
    \put(0,0){\includegraphics[width=\unitlength,page=2]{FiniteConesplane.pdf}}%
    \put(0.23120958,0.34068229){\makebox(0,0)[lt]{\lineheight{1.25}\smash{\begin{tabular}[t]{l}$r=r_{0}$\end{tabular}}}}%
    \put(0.79407543,0.34068253){\makebox(0,0)[lt]{\lineheight{1.25}\smash{\begin{tabular}[t]{l}$r=r_{0}$\end{tabular}}}}%
    \put(0,0){\includegraphics[width=\unitlength,page=3]{FiniteConesplane.pdf}}%
  \end{picture}%
\endgroup%

\caption{The hyperbolic and elliptic dynamics of $\restr{(\breve{g}_{h}\circ R)}{z=0}$ and $\restr{(\breve{g}_{e}\circ R)}{z=0}$, respectively.}
\label{dynamicsplane}
\end{figure}

\section{Proof of Theorem \ref{thm3}}\label{proof}
We show that the map $f$ given in Definition \ref{map} (see also formula \ref{defmap}) provides the realization of an arbitrary sequence of integers.
First, note that by the Mayer-Vietoris formula (see Theorem \ref{May-Viet}) and definition of the fixed point index, the equation: \begin{align}\label{form}
    \ind(f^{n},0)=\ind((s\circ \Theta)^{n},0)+\ind((s\circ \Phi)^{n},0)-1
\end{align}
holds for each $n$. 

We start with calculation of the indices of iterates of $s\circ \Theta$.

Let us point a few important facts about the constructed map that follows directly from the definition of the fixed point index: \begin{itemize} \item if $n$ is not divided by $2k-1$, then  (see formula (\ref{Theta})) $(s\circ\Theta_{2k-1})^{n}$ is homotopic to a map that generates a sink dynamics. 
\item $\ind((s\circ\Theta_{2k-1})^{n},0)=\ind(\restr{(s\circ \Theta_{2k-1})^{n}}{\vee_{i=1}^{2k-1}\breve{\mathcal{S}}_{i}},0)$. 
\item $\ind(\restr{(s\circ \Theta)^{n}}{\mathcal{M}_{1}},0)=\ind(\restr{(s\circ \Theta)^{n}}{\vee_{j=1}^{|a_{1}|\pm 1}(\mathcal{J}_{1})_{j}},0)$.
\item $\ind(\restr{(s\circ \Theta)^{n}}{\mathcal{M}_{2k-1}},0)=\ind(\restr{(s\circ \Theta)^{n}}{\vee_{j=1}^{|a_{2k-1}|}(\mathcal{J}_{2k-1})_{j}},0)$ for $k>1$.
\end{itemize}

The calculation of indices of iterates of $\ind((s\circ\Theta_{2k-1})^{n},0)$ reduces to determining it for $2k-1$ cones $\mathcal{\breve{S}}_{i}$ (cf. (\ref{newflow})) which are symmetrical with respect to $z=0$. Moreover, for each cone and odd iterate, we have: $\ind(\restr{(s\circ \Theta_{2k-1})^{n}}{\breve{\mathcal{S}}_{i}},0)$ is equal either $0$ or $2$ (see Lemma \ref{index_0}).

We consider the following cases:
\begin{enumerate}
\item[{\bf (I)}] \underline{If $a_{2k-1}<0$ and $k>1$}
\end{enumerate}

Using the Mayer-Vietoris formula (\ref{May-viet1}) inductively for consecutive solid cones in a bouquet, Lemma \ref{sym} and the above facts, we calculate that: \begin{align}
        \ind((s\circ \Theta_{2k-1})^{n},0)=\begin{cases}
        1-(2k-1) \ &\hbox{if $(2k-1)|n$},\\
        1 \ &\hbox{otherwise}.\end{cases}
\end{align}
Hence the equation $\ind((s\circ \Theta_{2k-1})^{n},0)=\reg_{1}(n)-\reg_{2k-1}(n)$ is satisfied. Let us now extend this result to the map $\restr{(s\circ \Theta)}{\mathcal{M}_{2k-1}}\colon \mathcal{M}_{2k-1}\to \mathcal{M}_{2k-1}$. The analogous calculation gives us the following: \begin{align}
    \ind(\restr{(s\circ \Theta)^{n}}{\mathcal{M}_{2k-1}},0)=\reg_{1}(n)+a_{2k-1}\reg_{2k-1}(n).
\end{align}
\begin{enumerate}
\item[{\bf (II)}] \underline{If $a_{1}<0$}
\end{enumerate}
In this case, directly by the definition of the map $\restr{\Theta}{\mathcal{M}_{1}}\colon \mathcal{M}_{1}\to \mathcal{M}_{1}$, we obtain: \begin{align}
    \ind(\restr{(s\circ \Theta)^{n}}{\mathcal{M}_{1}},0)=a_{1}\reg_{1}(n).
\end{align}
\begin{enumerate}
\item[{\bf (III)}] \underline{If $a_{2k-1}>0$ and $k>1$}
\end{enumerate}
As in the first case \textbf{(I)}, we start with the map $(s\circ \Theta_{2k-1})\colon \mathcal{J}_{2k-1}\to \mathcal{J}_{2k-1}$. According to the previous arguments, we get: \begin{align}
    \ind((s\circ \Theta_{2k-1})^{n},0)=\begin{cases}
        1+(2k-1) \ &\hbox{if $(2k-1)|n$ and $n$ is odd},\\
        1-(2k-1) \ &\hbox{if $(2k-1)|n$ and $n$ is even},\\
        1 \ &\hbox{otherwise}.
    \end{cases}
\end{align}
Thus $\ind((s\circ \Theta_{2k-1})^{n},0)=\reg_{1}(n)+\reg_{2k-1}(n)-\reg_{2(2k-1)}(n)$ is satisfied. Now, as before, we extend this result and get the following: \begin{align}
    \ind(\restr{(s\circ \Theta)^{n}}{\mathcal{M}_{2k-1}},0)=\reg_{1}(n)+a_{2k-1}\reg_{2k-1}(n)-a_{2k-1}\reg_{2(2k-1)}(n).
\end{align}
\begin{enumerate}
\item[{\bf (IV)}] \underline{If $a_{1}>0$}
\end{enumerate}
Similarly to \textbf{(II)} by the definition of the map  $\restr{\Theta}{\mathcal{M}_{1}}\colon \mathcal{M}_{1}\to \mathcal{M}_{1}$, we immediately
obtain: \begin{align}
    \ind(\restr{(s\circ \Theta)^{n}}{\mathcal{M}_{1}},0)=a_{1}\reg_{1}(n)+(1-a_{1})\reg_{2}(n).
\end{align}
Now define the map $\rho\colon \lbrace a_{1}\rbrace \to \lbrace 0,1\rbrace$ dependent on the value of $a_{1}$ as follows: $\rho(a_{1})=1$ if $a_{1}>0$ and $\rho(a_{1})=0$ otherwise. Consequently, adding up all cases, we get the following:
\begin{align}
\begin{split}
    \ind((s\circ \Theta)^{n},0)&=a_{1}\reg_{1}(n)+\rho(a_{1})(1-a_{1})\reg_{2}(n) +\sum_{a_{2k-1}<0, k>1} a_{2k-1}\reg_{2k-1}(n)\\&+\sum_{a_{2k-1}>0, k>1} (a_{2k-1}\reg_{2k-1}(n)-a_{2k-1}\reg_{2(2k-1)}(n)).
    \end{split}
\end{align}

Now we make a calculation of the indices of iterates of  $s\circ \Phi$.
First, let us make an obvious observation. If $n$ is an odd integer, then $(s\circ \Phi)(\mathcal{A}_{\pm})\subset\mathcal{A}_{\mp}$ and thus $\ind((s\circ \Phi)^{n},0)=1$ for such iterates. It remains to perform calculations for even iterates due to integers $p_{k}$ (see \ref{int}). Once again, using the Mayer-Vietoris formula, we get the following: \begin{align}\begin{split}
    \ind((s\circ \Phi)^{n},0)&=\reg_{1}(n)+\reg_{2}(n)(a_{2}+\rho(a_{1})(a_{1}-1))+\sum_{a_{2k-1}<0,k>1}a_{2(2k-1)}\reg_{2(2k-1)}(n)\\
    &+\sum_{a_{2k-1}>0,k>1}(a_{2(2k-1)}+a_{2k-1})\reg_{2(2k-1)}(n)+\sum_{k>1, 2|k}a_{2k}\reg_{2k}(n).
    \end{split}
\end{align}

Finally, with reference to (\ref{form}), we obtain: \begin{align}
    \ind(f^{n},0)=\sum_{k=1}^{\infty}a_{k}\reg_{k}(n).
\end{align}
This completes the proof of Theorem \ref{thm3} in $\mathbb{R}^{3}$. \vskip 5mm 

In  dimension higher than $3$ there are also no restrictions for indices. 
Namely, we can now consider $f^{*}\colon \mathbb{R}^{m-3}\to \mathbb{R}^{m-3}$ for $m>3$, an orientation-preserving homeomorphism such that $\lbrace 0\rbrace$ is a global attractor. Then $\ind((f^{*})^{n},0)=\reg_{1}(n)$. The product map $(f\times f^{*})\colon \mathbb{R}^{m}\to \mathbb{R}^{m}$ is an orientation-reversing self-homeomorphism of $\mathbb{R}^{m}$, using the product formula for the fixed point index, we get the main result: \begin{align}
    \ind((f\times f^{*})^{n},0)=\sum_{k=1}^{\infty}a_{k}\reg_{k}(n),
\end{align}
which proves Theorem \ref{thm3} in $m-$dimensional space.
\section{Final remarks}
Observe that in our construction of the map realizing indices the fixed point $0$ is a limit of periodic points of $f$ (i.e. $0$ is an accumulated fixed point of $f$, see Definition \ref{accumulated}). 
In case $\lbrace 0\rbrace$ is a non-accumulated fixed point of $f$ we could provide directly from our construction the realization only for periodic expansion with finitely many non-zero odd terms.  

\begin{corollary} \label{corac}
    For any sequence $\lbrace a_{n}\rbrace_{n=1}^{\infty}$ of Dold's coefficients, there is an orientation-reversing homeomorphism $f$ of $\mathbb{R}^{3}$ such that $\lbrace 0\rbrace$ is a non-accumulated fixed point and \begin{align} \label{eq527}
    \ind(f^{n},0)=\sum_{k=1}^{\infty}a_{k}\reg_{k}(n)
\end{align}
if  there are finitely many non-zero $a_{k}$ for odd $k$. 
\end{corollary}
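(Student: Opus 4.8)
The plan is to return to the homeomorphism $f=s\circ g$ built in Section~\ref{proof} and to locate all of its periodic points near $0$. Decompose $\mathbb{R}^{3}$ into the three $f$-invariant pieces $\vee_{k\ge1}\mathcal{M}_{2k-1}$, $\mathcal{A}_{+}\cup\mathcal{A}_{-}$ and the complementary region $\mathcal{O}$ on which $g=S(\cdot,1)$. On $\mathcal{O}$ the map $f=s\circ S(\cdot,1)$ is a radial contraction, so its only periodic point is $0$. On $\mathcal{A}_{+}\cup\mathcal{A}_{-}$ the map $f$ interchanges the two cones, $f^{2}$ preserves each, and a routine check --- as in the analysis of Section~\ref{sec3}, where the main conical surfaces are invariant and the induced dynamics on each is a rational or irrational rotation composed with a time-$t$ map of a planar flow $H_{p}$, a flow having $0$ as its only rest point and no periodic orbit at all --- shows that $0$ is the only periodic point of $f$ in $\mathcal{A}_{+}\cup\mathcal{A}_{-}$ as well. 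Hence every periodic point of $f$ other than $0$ lies inside one of the cones $\mathcal{M}_{2k-1}$.

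Next I would pin the periodic points down inside a single cone $\mathcal{M}_{2k-1}$. By construction (see~(\ref{conical})) the flow $\breve{G}_{i}$ underlying $G_{2k-1}$ is the pure elliptic/hyperbolic model flow on $\{r<r_{0}\}$, is stationary on $\{r=r_{0}\}$, and is radially strictly contracting on $\{r>r_{0}\}$, with $r_{0}=1/k$; moreover the permutation map $u_{2k-1}$ (see~(\ref{65})) and the reflection $s$ preserve every sphere $\{r=c\}$. Hence $\{r<r_{0}\}\cap\mathcal{M}_{2k-1}$, $\{r=r_{0}\}\cap\mathcal{M}_{2k-1}$ and $\{r>r_{0}\}\cap\mathcal{M}_{2k-1}$ are all forward $f$-invariant; on the last of them the radius strictly decreases, so there is no periodic point there. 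On $\{r<r_{0}\}\cap\mathcal{M}_{2k-1}$ the map $f$ equals $G_{2k-1}\circ(s\circ u_{2k-1})$; since the central ray of $\mathcal{M}_{2k-1}$ lies in the mirror plane $z=0$, the reflection $s$ conjugates the rigid rotation $u_{2k-1}$ to its inverse, so that $(s\circ u_{2k-1})^{2}=\id$, while $G_{2k-1}$ --- acting identically on each rotated element of the bouquet and only in the $(r,\varphi)$-coordinates --- commutes with both $s$ and $u_{2k-1}$. Therefore $f^{2}$ equals $G_{2k-1}^{2}$ there, which on each element of the bouquet is the time-two map of a flow made of hyperbolic and elliptic sectors and between the elements a radial contraction, so its only periodic point is $0$; consequently $f$ has no periodic point but $0$ on $\{r<r_{0}\}\cap\mathcal{M}_{2k-1}$. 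So the only periodic points of $f$ inside $\mathcal{M}_{2k-1}$ other than $0$ lie on the sphere $\{r=1/k\}$ --- which is precisely why $0$ becomes an accumulated fixed point of $f$ once infinitely many odd coefficients $a_{k}$ are non-zero.

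Finally, assume $a_{k}=0$ for all odd $k>K$. Then only the finitely many cones $\mathcal{M}_{2k-1}$ with $2k-1\le K$ appear, and for each of them $r_{0}=1/k\ge r_{*}$ for some $r_{*}>0$. Put $U=B(0,r_{*})$. A point $x\in U$ lying in $\mathcal{O}$ or in $\mathcal{A}_{+}\cup\mathcal{A}_{-}$ has its whole forward orbit in that piece, where $0$ is the only periodic point; a point $x\in\mathcal{M}_{2k-1}\cap U$ satisfies $|x|<r_{*}\le r_{0}$, hence lies in the forward-invariant set $\{r<r_{0}\}\cap\mathcal{M}_{2k-1}$, whose only periodic point is $0$. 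In all cases $f^{n}(U)\cap\Per(f)=\{0\}$ for every $n$, so $0$ is a non-accumulated fixed point; and since $f$ is exactly the map from Section~\ref{proof}, the equality $\ind(f^{n},0)=\sum_{k}a_{k}\reg_{k}(n)$ still holds, which proves the corollary.

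The step I expect to be the main obstacle is the second one: showing that inside a single cone $\mathcal{M}_{2k-1}$ the only periodic points near $0$ are $0$ itself and the orbits lying on $\{r=1/k\}$. The delicate point is that $s$ and $u_{2k-1}$ do not commute, so the reduction of $f^{2}$ to $G_{2k-1}^{2}$ on $\{r<r_{0}\}$ must be carried out carefully in the conical coordinates of Section~\ref{proof}, using that the rotation axis of $\mathcal{M}_{2k-1}$ lies in the reflection plane and that the flows attached to the bouquet are rotationally equivariant copies of one another. Once this is granted, the absence of extra periodic orbits is immediate, since a flow built only from hyperbolic and elliptic sectors has a strictly monotone angular coordinate away from its separatrices and hence no closed orbit.
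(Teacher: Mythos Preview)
Your approach is correct and is precisely what the paper has in mind: the paper does not prove Corollary~\ref{corac} separately but simply remarks that the statement follows ``directly from our construction,'' and you have supplied the details by locating the non-trivial periodic points of $f=s\circ g$ on the spheres $\{r=1/k\}$ inside the (now finitely many) cones $\mathcal{M}_{2k-1}$. The obstacle you flag is real but resolvable: the homeomorphisms $h_{r}$ in~(\ref{65}) are underspecified in the paper, and once they are chosen $s$-equivariantly and compatibly with the conical coordinates of the $\breve{\mathcal{S}}_{i}$ (e.g.\ as scaled copies of a single reflection-symmetric $h$), the identity $f^{2}=G_{2k-1}^{2}$ on $\{r<r_{0}\}$ holds and the rest of your argument goes through.
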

This leads us to formulate the following question.
\begin{problem}
    Is the Corollary \ref{corac} optimal? In other words, under the assumption that $\lbrace 0\rbrace $ is a non-accumulated fixed point of  an orientation-reversing homeomorphism, is it possible to realize any sequence of Dold's coefficients in (\ref{eq527}). 
\end{problem}

\bibliographystyle{elsarticle-num}

\end{document}